\DeclareMathOperator{\ch}{char}
\DeclareMathOperator{\ad}{ad}
\DeclareMathOperator{\gr}{gr}
\DeclareMathOperator{\Der}{Der}
\DeclareMathOperator{\Lie}{Lie}
\DeclareMathOperator{\Alg}{Alg}
\DeclareMathOperator{\End}{End}
\DeclareMathOperator{\wt}{wt}    
\DeclareMathOperator{\swt}{swt}  
\DeclareMathOperator{\Wt}{Wt}    
\DeclareMathOperator{\Gr}{Gr}     
\renewcommand {\limsup}{\operatorname* {\overline{lim}}}
\renewcommand {\liminf}{\operatorname* {\underline{lim}}}
\DeclareMathOperator{\GKdim}{GKdim}
\DeclareMathOperator{\LGKdim}{\underline{GKdim}}
\renewcommand{\a}{\alpha}
\renewcommand{\b}{\beta}
\newcommand{\dd}{\partial}
\newcommand{\Z}{\mathbb Z}            
\newcommand{\R}{\mathbb R}            
\newcommand{\N}{\mathbb N}            
\newcommand{\F}{\mathbb F}            
\newcommand{\C}{\mathbb C}            
\renewcommand{\H}{\mathcal H}         
\newcommand{\LL}{\mathbf L}        
\newcommand{\QQ}{\mathbf Q}        
\newcommand{\RR}{\mathbf R}        
\renewcommand{\AA}{\mathbf A}      
\newcommand{\uu}{\mathbf u}         
\newcommand{\WW}{\mathbf W}         
\newtheorem{Theorem}{Theorem}[section]
\newtheorem{Corollary}[Theorem]{Corollary}
\newtheorem{Lemma}[Theorem]{Lemma}
\theoremstyle{Remark}
\newtheorem{Remark}{Remark}
\theoremstyle{Example}
\renewcommand{\theenumi}{\roman{enumi}}   
\begin{document}
\title{Fractal just infinite nil Lie superalgebra of finite width}
\author{Otto Augusto de Morais Costa}
\address{Federal Institute of Rio Grande do Norte, 
59112-490 Natal RN, Brazil}
\email{ottoaugustomorais@gmail.com}
\author{Victor Petrogradsky}
\address{Department of Mathematics, University of Brasilia, 70910-900 Brasilia DF, Brazil}
\email{petrogradsky@rambler.ru}
\thanks{The second author was partially supported by grants CNPq~309542/2016-2, FAPESP~2016/18068-9
}
\subjclass[2000]{
16P90, 
16N40, 
16S32, 
17B50, 
17B65, 
17B66, 
17B70}  
\keywords{Lie superalgebras, restricted Lie algebras, growth, linear growth, Gelfand-Kirillov dimension,
self-similar algebras, fractal algebras, nil-algebras, graded algebras, just infinite, finite width, thin Lie algebras,
filiform Lie algebras, Lie algebras of differential operators}

\begin{abstract}
The Grigorchuk and Gupta-Sidki groups play fundamental role in modern group theory.
They are natural examples of self-similar finitely generated periodic groups.
As their natural analogues, there are constructions of 
nil Lie $p$-algebras
over a field of characteristic $2$~\cite{Pe06} and arbitrary positive characteristic~\cite{ShZe08}.
In characteristic zero, similar examples of Lie algebras   do not exist
by a result of Martinez and Zelmanov~\cite{MaZe99}.
\par
The second author constructed analogues of the Grigorchuk and Gupta-Sidki groups
in the world of Lie superalgebras of arbitrary characteristic,
the virtue of that construction is that the Lie superalgebras have clear monomial bases~\cite{Pe16}.
That Lie superalgebras have slow polynomial growth and are graded by multidegree in the generators.
In particular, a self-similar Lie superalgebra $\mathbf{Q}$ is
$\mathbb{Z}^3$-graded by multidegree in 3 generators, its
$\mathbb{Z}^3$-components lie inside an elliptic paraboloid in space, the components are at most one-dimensional,
thus, the $\mathbb{Z}^3$-grading of $\mathbf{Q}$ is fine.
An analogue of the periodicity is that homogeneous elements of the grading $\mathbf{Q}=\mathbf{Q}_{\bar 0}\oplus\mathbf{Q}_{\bar 1}$ are
$\mathrm{ad}$-nilpotent.
In particular, $\mathbf{Q}$ is a nil finely graded Lie superalgebra, which shows
that an extension of the mentioned result of Martinez and Zelmanov~\cite{MaZe99}
to the Lie superalgebras of characteristic zero is not valid.
But computations with $\mathbf{Q}$ are rather technical.
\par
In this paper, we construct a similar but simpler and "smaller" example.
Namely, we construct a 2-generated fractal
Lie superalgebra $\mathbf{R}$ over arbitrary field.
We find a clear monomial basis of $\mathbf{R}$ and, unlike many examples studied before,
we find also a clear monomial basis of its associative hull $\AA$, the latter
has a quadratic growth.
The algebras $\mathbf{R}$ and $\AA$ are $\mathbb{Z}^2$-graded by multidegree in the generators, positions of their
$\mathbb{Z}^2$-components are bounded by pairs of logarithmic curves on plane.
The $\mathbb{Z}^2$-components of $\mathbf{R}$ are at most one-dimensional, thus, the $\mathbb{Z}^2$-grading of $\mathbf{R}$ is fine.
As an analogue of periodicity,
we establish that homogeneous elements of the grading $\mathbf{R}=\mathbf{R}_{\bar 0}\oplus\mathbf{R}_{\bar 1}$ are $\ad$-nilpotent.
In case of $\mathbb{N}$-graded algebras, a close analogue to being simple is being just infinite.
Unlike previous examples of Lie superalgebras, we are able to prove that $\mathbf{R}$ is just infinite,
but not hereditary just infinite.
Our example is close to the smallest possible example, because $\mathbf{R}$ has a linear growth
with a growth function $\gamma_\mathbf{R}(m)\approx 3m$, as $m\to\infty$.
Moreover, $\RR$ is of finite width 4 ($\ch K\ne 2$).
In case $\ch K=2$, we obtain a Lie algebra of width 2 that is not thin.
\par
Thus, we have got a more handy analogue of the Grigorchuk and Gupta-Sidki groups.
The constructed Lie superalgebra $\mathbf{R}$ is of linear growth, of finite width 4, and just infinite. 
It also shows that an extension of the result of Martinez and Zelmanov~\cite{MaZe99}
to the Lie superalgebras of characteristic zero is not valid.
\end{abstract}
\maketitle
\section{Introduction: Self-similar groups and algebras}

\subsection{Golod-Shafarevich algebras and groups}
The General Burnside Problem puts the question whether a finitely generated periodic group is finite.
The first negative answer was given by Golod and Shafarevich,
who proved that, for each prime $p$, there exists a finitely generated infinite $p$-group~\cite{Golod64,GolSha64}.
The construction is based on a famous construction of a family of finitely generated
infinite dimensional associative nil-algebras~\cite{Golod64}.
This construction also yields examples of infinite dimensional finitely generated Lie algebras $L$
such that $(\ad x)^{n(x,y)}(y)=0$, for all $x,y\in L$, the field being arbitrary~\cite{Golod69}.
The field being of positive characteristic $p$,
one obtains an infinite dimensional finitely generated restricted Lie algebra $L$ such that the $p$-mapping is nil, namely,
$x^{[p^{n(x)}]}=0$, for all $x\in L$.
This gives a negative answer to a question of Jacobson whether
a finitely generated restricted Lie algebra $L$ is finite dimensional provided that
each element $x\in L$ is algebraic, i.e. satisfies some $p$-polynomial $f_{p,x}(x)=0$
(\cite[Ch.~5, ex.~17]{JacLie}). 
It is known that the construction of Golod yields associative nil-algebras of exponential growth.
Using specially chosen relations, Lenagan and Smoktunowicz
constructed associative nil-algebras of polynomial growth~\cite{LenSmo07}.
On further developments concerning Golod-Shafarevich algebras and groups see~\cite{Voden09}, \cite{Ershov12}.

A close by spirit but different construction was motivated by respective group-theoretic results.
A restricted Lie algebra $G$ is called {\it large} if there is a subalgebra  $H\subset G$ of finite codimension
such that $H$ admits a surjective homomorphism on a nonabelian free restricted Lie algebra.
Let $K$ be a perfect at most countable field of positive characteristic.
Then there exist infinite dimensional finitely generated nil restricted Lie algebras over $K$ that
are residually finite dimensional and direct limits of large restricted Lie algebras~\cite{BaOl07}.

\subsection{Grigorchuk and Gupta-Sidki groups}
The construction of Golod is rather undirect, Grigorchuk gave a direct and elegant construction of
an infinite 2-group generated by three elements of order 2~\cite{Grigorchuk80}.
This group was defined as a group of transformations of the interval $[0,1]$ from which
rational points of the form $\{k/2^n\mid  0\le k\le 2^n,\ n\ge 0\}$ are removed.
For each prime $p\ge 3$, Gupta and Sidki gave a direct construction of an infinite $p$-group
on two generators, each of order $p$~\cite{GuptaSidki83}.
This group was constructed as a subgroup of an automorphism group of an infinite regular tree of degree $p$.

The Grigorchuk and Gupta-Sidki groups are counterexamples to the General Burnside Problem.
Moreover, they gave answers to important problems in group theory.
So, the Grigorchuk group and its further generalizations
are first examples of groups of intermediate growth~\cite{Grigorchuk84}, thus answering
in negative to a conjecture of Milnor that groups of intermediate growth do not exist.
The construction of Gupta-Sidki also yields groups of subexponential growth~\cite{FabGup85}.
The Grigorchuk and Gupta-Sidki groups are {\it self-similar}.
Now self-similar, and so called {\it branch groups}, form a well-established area in group theory, see
for further developments~\cite{Grigorchuk00horizons,Nekr05}.
Below we discuss existence of analogues of the Grigorchuk and Gupta-Sidki groups for other algebraic structures.

\subsection{Self-similar nil graded associative algebras}
The study of these groups lead to investigation of group rings and other related associative algebras~\cite{Sidki97}.
In particular, there appeared self-similar associative algebras defined by matrices
in a recurrent way~\cite{Bartholdi06}.
Sidki suggested two examples of self-similar associative matrix algebras~\cite{Sidki09}.
A more general family of self-similar associative algebras was introduced in~\cite{PeSh13ass},
this family generalizes the second example of Sidki~\cite{Sidki09},
also it yields a realization of a Fibonacci restricted Lie algebras (see below)
in terms of self-similar matrices~\cite{PeSh13ass}.
Another important feature of some associative algebras $A$ constructed in~\cite{PeSh13ass} is that
they are sums of two locally nilpotent subalgebras $A=A_+\oplus A_-$
(see similar decompositions~\eqref{decompLL} below).
Recall that an algebra is said {\em locally nilpotent} if every finitely generated subalgebra is nilpotent.
But the desired analogues of the Grigorchuk and Gupta-Sidki groups should be associative self-similar nil-algebras,
in a standard way yielding new examples of finitely generated periodic groups.
But such examples are not known yet.
On similar open problems in theory of infinite dimensional algebras see review~\cite{Zelmanov07}.

\subsection{Self-similar nil restricted Lie algebras, Fibonacci Lie algebra}
Unlike associative algebras, for restricted Lie algebras,
there are known natural analogues of the Grigorchuk and Gupta-Sidki groups.
Namely, over a field of characteristic 2,
the second author constructed an example of an infinite dimensional restricted Lie algebra $\LL$
generated by two elements, called a {\it Fibonacci restricted Lie algebra}~\cite{Pe06}.
Let $\ch K=p=2$ and $R=K[t_i| i\ge 0 ]/(t_i^p| i\ge 0)$ a truncated polynomial ring.
Put $\dd_i=\frac {\dd}{\partial t_i}$, $i\ge 0$.
Define the following two derivations of $R$:
\begin{align*}
v_1 & =\dd_1+t_0(\dd_2+t_1(\dd_3+t_2(\dd_4+t_3(\dd_5+t_4(\dd_6+\cdots )))));\\
v_2 & =\qquad\quad\;\,
\dd_2+t_1(\dd_3+t_2(\dd_4+t_3(\dd_5+t_4(\dd_6+\cdots )))).
\end{align*}
These two derivations generate
a restricted Lie algebra $\LL=\Lie_p(v_1,v_2)\subset\Der R$ and an associative algebra $\AA=\Alg(v_1,v_2)\subset \End R$.
By Bergman's theorem, the Gelfand-Kirillov dimension of an associative algebra cannot belong to the interval $(1,2)$~\cite{KraLen}.
Such a gap for Lie algebras does not exist, the Gelfand-Kirillov
dimension of a finitely generated Lie algebra can be arbitrary number $\{0\}\cup [1,+\infty)$~\cite{Pe97}.
The Fibonacci Lie algebra has slow polynomial growth
with Gelfand-Kirillov dimension $\GKdim \LL=\log_{(\sqrt 5+1)/2} 2\approx 1.44$~\cite{Pe06}.
Further properties of the Fibonacci restricted Lie algebra and its generalizations are studied in~\cite{PeSh09,PeSh13fib}.

Probably, the most interesting property of $\LL$ is that it has a nil $p$-mapping~\cite{Pe06},
which is an analog of the periodicity of the Grigorchuk and Gupta-Sidki groups.
We do not know whether the associative hull $\AA$ is a nil-algebra.
We have a weaker statement.
The algebras $\LL$, $\AA$, and the augmentation ideal
of the restricted enveloping algebra
$\uu=\omega u(\LL)$ are direct sums of two locally nilpotent subalgebras~\cite{PeSh09}:
\begin{equation}\label{decompLL}
\LL=\LL_+\oplus \LL_-,\quad \AA=\AA_+\oplus \AA_-,\quad \uu=\uu_+\oplus \uu_-.
\end{equation}
There are examples of infinite dimensional associative algebras which
are direct sums of two locally nilpotent subalgebras~\cite{Kel93,DreHam04}.
Infinite dimensional restricted Lie algebras can have
different decompositions into a direct sum of two locally nilpotent subalgebras~\cite{PeShZe10}.
\medskip

In case of arbitrary prime characteristic,
Shestakov and Zelmanov suggested an example of a finitely generated restricted Lie algebra
with a nil $p$-mapping~\cite{ShZe08}.
That example yields the same decompositions~\eqref{decompLL} for some primes~\cite{Kry11,PeSh13ass}.
An example of a $p$-generated  nil restricted Lie algebra $L$,
characteristic $p$ being arbitrary, was studied in~\cite{PeShZe10}.
The virtue of that example is that for all primes $p$
we have the same decompositions~\eqref{decompLL} into direct sums of two locally nilpotent subalgebras.
But computations for that example are rather complicated.

Observe that only the original example has a clear monomial basis~\cite{Pe06,PeSh09}.
In other examples, elements of a Lie algebra are  linear combinations of monomials,
to work with such linear combinations is sometimes an essential technical difficulty, see e.g.~\cite{ShZe08,PeShZe10}.
A family of nil restricted Lie algebras of slow polynomial growth having clear monomial bases
is constructed in~\cite{Pe17},
these algebras are close relatives of a two-generated Lie superalgebra of~\cite{Pe16}.
As a particular case, we obtain a continuum subfamily
of non-isomorphic nil restricted Lie algebras with
the Gelfand-Kirillov dimension equal to one,  but their growth is not linear~\cite{Pe16}.

\subsection{Narrow groups and Lie algebras}\label{SSnarrow}
Let $G$ be a group and $G=G_1\supseteq G_2\supseteq \cdots$ its lower central series.
One constructs a related $\N$-graded Lie algebra
$L_K(G)=\oplus_{i\ge 1} L_i$, where $L_i= G_i/G_{i+1}\otimes_{\Z} K$, $i\ge 1$.
A product is given by $[a_iG_{i+1},b_jG_{j+1}]=(a_i,b_j)G_{i+j+1}$,
where $(a_i,b_j)=a_i^{-1}b_j^{-1}a_ib_j$ is the group commutator.

A residually $p$-group $G$ is said of {\it finite width} if
all factors $G_i/G_{i+1}$ are finite groups with uniformly bounded orders.
The Grigorchuk group $G$ is of finite width, namely,
$\dim_{\F_2} G_i/G_{i+1}\in\{1,2\}$ for $i\ge 2$~\cite{Rozh96,BaGr00}.
In particular, the respective Lie algebra $L=L_K(G)=\oplus_{i\ge 1} L_i$ has a linear growth.
Bartholdi presented $L_{K}(G)$ as a self-similar restricted Lie algebra
and proved that the restricted Lie algebra $L_{\F_2}(G)$ is nil while $L_{\F_4}(G)$ is not nil~\cite{Bartholdi15}.
Also, $L_K(G)$ is {\it nil graded}, namely,
for any homogeneous element $x\in L_i$, $i\ge 1$, the mapping $\ad x$ is nilpotent,
because $G$ is periodic.

Shalev and Zelmanov suggested that narrowness
conditions for Lie algebras deserve systematic study because the most
important algebras are narrow in some sense~\cite{ShaZel97, ShaZel99}.
Infinite dimensional $\N$-graded Lie algebras $L=\mathop{\oplus}\limits_{n=1}^\infty L_n$
with one-dimensional components in characteristic zero were classified by Fialowski~\cite{Fial83}.
A Lie algebra $L$ is called of {\it maximal class} (or {\it filiform}),
if the associated graded algebra with respect to the lower central series
$\gr L=\mathop{\oplus}\limits_{n=1}^\infty \gr L_n$, where $\gr L_n=L^n/L^{n+1}$, $n\ge 1$,
satisfies
\begin{equation}\label{filiform}
    \dim \gr L_1=2,\quad \dim \gr L_n\le 1,\ n\ge 2,\quad \gr L_{n+1}=[\gr L_1, \gr L_n],\ n \ge 1,
\end{equation}
in particular, $\gr L$ is generated by $\gr L_1$.
It follows from the paper of Vergne~\cite{Vergne70} that in zero characteristic
there exists a unique infinite dimensional $\N$-graded Lie algebra whose grading satisfies~\eqref{filiform}.
In zero characteristic their exist uncountably many pairwise non-isomorphic
$\N$-graded filiform Lie algebras  of width one in dimensions 7--11~\cite{Mil04}.

An infinite dimensional filiform Lie algebra $L$ has the
smallest nontrivial growth function: $\gamma_L(n)=n+1$, $n\ge 1$.
In case of positive characteristic, there are uncountably many such algebras~\cite{CarMatNew97}.
Nevertheless, in case $p>2$, they were classified in~\cite{CarNew00}.
There are generalizations  of filiform Lie algebras.
A graded Lie algebra $L=\mathop{\oplus}\limits_{n=1}^\infty L_n$ is said {\it thin} if
$\dim L_1=2$ and it  satisfies the {\it covering property}:
$L_{n+1}=[L_1, z]$ for any $0\ne z\in L_n$ and all $n\ge 1$~\cite{CarMatNewSco96}.
In particular, $\dim L_n\le 2$ for all $n\ge 1$.
Components of dimension~2 are called {\it diamonds}.
This notion appeared in a theory of similarly defined {\it thin groups}~\cite{CarMatNewSco96}.

Naturally $\N$-graded Lie algebras over $\R$ and $\C$
satisfying the condition $\dim L_n+\dim L_{n+1}\le 3$, $n\ge 1$, are classified recently by Millionschikov~\cite{Mil}.
More generally, an $\N$-graded Lie algebra
$L=\mathop{\oplus}\limits_{n=1}^\infty L_n$ is said of finite {\it width} $d$ in the case that $\dim L_n\le d$, $n\ge 1$,
the integer $d$ being minimal.

Pro-$p$-groups and $\N$-graded Lie algebras cannot be simple.
Instead one has an important notion of being {\it just infinite}, namely,
not having non-trivial normal subgroups (ideals) of infinite index (codimension).
A group (algebra) is said {\it hereditary just infinite}
if and only if any normal subgroup (ideal) of finite index (codimension) is just infinite.
The Gupta-Sidki groups were the first in the class of periodic groups to be shown to be just infinite~\cite{GuptaSidki83A}.
The Grigorchuk group is also just infinite but not hereditary just infinite~\cite{Grigorchuk00horizons}.

\subsection{Lie algebras over a field of characteristic zero}
Since the Grigorchuk group is periodic and of finite width,
its appropriate analogue should be a "small" $\ad$-nil Lie algebra.
For example, a Lie algebra should be  of finite width, implying a linear growth.
In the next result, a weaker assumption that components of a grading are bounded
implies that the growth of finitely generated subalgebras is polynomial.
Informally speaking, there are no "natural analogues" of the Grigorchuk and Gupta-Sidki groups
in the world of Lie algebras of characteristic zero
in terms of the following result {\it sensu stricto}.

\begin{Theorem}[{Martinez and Zelmanov~\cite{MaZe99}}]
\label{TMarZel}
Let $L=\oplus_{\a\in\Gamma}L_\alpha$ be a Lie algebra over a field $K$ of
characteristic zero graded by an abelian group $\Gamma$. Suppose that
\begin{enumerate}
\item
there exists $d>0$ such that $\dim_K L_\alpha \le d $ for all $\alpha\in\Gamma$,
\item
every homogeneous element $a\in L_\a$, $\a\in\Gamma$, is ad-nilpotent.
\end{enumerate}
Then the Lie algebra $L$ is locally nilpotent.
\end{Theorem}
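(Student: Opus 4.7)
The plan is to reduce to showing that every finitely generated subalgebra of $L$ is nilpotent and then invoke Zelmanov's celebrated theorem that a Lie algebra over a field of characteristic zero satisfying a uniform Engel identity is locally nilpotent.

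First, I would reduce to the finitely generated case. Local nilpotence is a property about finitely generated subalgebras, and since any generator of such a subalgebra decomposes as a finite sum of homogeneous components (each lying in some $L_\alpha$ and satisfying the same hypotheses), I may replace $L$ by a subalgebra $L'\subseteq L$ generated by finitely many homogeneous ad-nilpotent elements $a_1\in L_{\alpha_1},\dots,a_k\in L_{\alpha_k}$.

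Second, I would try to establish a uniform Engel identity on homogeneous elements of $L'$. For a homogeneous $x\in L_\gamma$ with $\gamma$ of finite order $m$ in $\Gamma$, the ad-action of $x$ stabilizes each subspace $\bigoplus_{i=0}^{m-1}L_{\beta+i\gamma}$, which has dimension at most $md$; since $\ad x$ is already nilpotent, its restriction to this finite-dimensional space has nilpotence index at most $md$, giving the uniform bound $(\ad x)^{md}=0$. For $\gamma$ of infinite order, the chain $L_\beta\to L_{\beta+\gamma}\to L_{\beta+2\gamma}\to\cdots$ passes through infinitely many distinct components of dimension at most $d$; here the characteristic zero hypothesis enters crucially, via the Jordan decomposition and PI-theoretic rank arguments, to produce a bound on the ad-nilpotence index depending only on $d$ and on the ad-nilpotence indices of $a_1,\dots,a_k$.

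Third, once a uniform Engel index $n$ is established on homogeneous elements, I would linearize. In characteristic zero, the multilinearization of $(\ad x)^n y=0$ holds on tuples of homogeneous elements, hence on arbitrary tuples by multilinearity of both sides; specializing $x_1=\cdots=x_n=x$ recovers $n!(\ad x)^n y=0$, so $(\ad x)^n=0$ for all $x\in L'$. Zelmanov's theorem on $n$-Engel Lie algebras in characteristic zero then implies that the finitely generated Engel algebra $L'$ is nilpotent, completing the proof.

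The main obstacle is the second step in the infinite-order case: extracting a uniform bound on the ad-nilpotence index from a merely pointwise hypothesis, given only the dimensional bound $d$ on graded components. This is the deep heart of the Martinez--Zelmanov argument, where the characteristic zero hypothesis is essential and where absence of the condition would be fatal (cf.\ the Lie-superalgebra counterexamples $\QQ$ and $\RR$ constructed in this paper and in~\cite{Pe16}). The remaining steps are relatively standard reductions once the uniform Engel condition is in hand.
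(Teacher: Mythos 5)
This statement is not proved in the paper at all: it is quoted as an external result of Martinez and Zelmanov~\cite{MaZe99}, and its only role here is as background --- the point of the paper is that the analogous statement for Lie \emph{super}algebras in characteristic zero fails, as witnessed by $\QQ$ of~\cite{Pe16} and by the algebra $\RR$ constructed here. So there is no proof in the paper to compare yours against; what can be judged is whether your sketch would itself constitute a proof of the cited theorem, and it would not, for two concrete reasons.

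First, your second step is a placeholder, not an argument. The finite-order case is fine (and, after restricting to the subgroup of $\Gamma$ generated by the degrees of the finitely many generators, torsion orders are indeed bounded, since a finitely generated abelian group has bounded torsion), but for degrees of infinite order you say only that characteristic zero enters ``via the Jordan decomposition and PI-theoretic rank arguments.'' Extracting a uniform ad-nilpotence bound from the pointwise hypothesis together with the bound $d$ on components is precisely the content of the Martinez--Zelmanov theorem, as you yourself acknowledge; a proof that defers its central step to the theorem being proved is circular. Second, even granting a uniform bound $n$ on homogeneous elements, your linearization step is invalid. The identity $(\ad x)^n y=0$ is assumed only for homogeneous $x$, and the homogeneous elements form a union of subspaces, not a subspace: to multilinearize one must evaluate the identity at $x=t_1x_1+\cdots+t_nx_n$ with the $x_i$ of pairwise distinct degrees, and such sums are exactly the elements about which the hypothesis says nothing. (Linearization does work when all $x_i$ lie in a single component $L_\alpha$, but that is far weaker than the full Engel condition.) Hence you cannot conclude $(\ad x)^n=0$ for all $x\in L'$, and the appeal to Zelmanov's theorem on $n$-Engel Lie algebras in characteristic zero never gets off the ground. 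Bridging the gap between ``every homogeneous element is ad-nilpotent'' and a genuine uniform Engel condition on the whole algebra is exactly what requires the hard structure theory of~\cite{MaZe99}.
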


\subsection{Fractal nil graded Lie superalgebras $\RR$ and $\QQ$}
In the world of {\it Lie  superalgebras} of an {\it arbitrary characteristic},
the second author constructed analogues of the Grigorchuk and Gupta-Sidki groups~\cite{Pe16}.
Namely, two Lie superalgebras $\RR$, $\QQ$ were constructed,
which are also analogues of the Fibonacci restricted Lie algebra and other restricted Lie algebras mentioned above.
The constructions of both Lie superalgebras $\RR$, $\QQ$ are similar,
computations for $\RR$ are simpler, but $\QQ$ enjoys some more specific interesting properties.
The virtue of both examples is that they have clear monomial bases.
They have slow polynomial growth, namely,
$\GKdim \RR=\log_34\approx 1.26$ and $\GKdim \QQ=\log_38\approx 1.89$.
Thus, both Lie superalgebras are of infinite width.
In both examples, $\ad a$ is nilpotent,
$a$ being an even or odd element with respect to $\Z_2$-gradings as Lie superalgebras.
This property is an analogue of the periodicity of the Grigorchuk and Gupta-Sidki groups.
The Lie superalgebra $\RR$ is $\Z^2$-graded, while $\QQ$ has a natural fine $\Z^3$-gradation
with at most one-dimensional components
(See on importance of fine gradins for Lie and associative algebras~\cite{BaSeZa01,Eld10}.
There are examples of simple $\Z^2$-graded Lie algebras in characteristic zero with
one-dimensional components~\cite{IohMat13}).
In particular, $\QQ$ is a nil finely graded Lie superalgebra, which shows
that an extension of Theorem~\ref{TMarZel} (Martinez and Zelmanov~\cite{MaZe99})
for the Lie superalgebras of characteristic zero is not valid.
Also, $\QQ$ has a $\Z^2$-gradation which yields a continuum of different decompositions
into sums of two locally nilpotent subalgebras $\QQ=\QQ_+\oplus\QQ_-$.
Both Lie superalgebras are {\it self-similar}.
They also contain infinitely many copies of themselves, we call them {\it fractal} due to the last property.
\medskip

In Section~\ref{Sdef} we supply basic definitions.
In Section~\ref{Smain} we give a construction of our Lie superalgebra $\RR$
(this notation was also used for the 2-generated Lie superalgebra~$\RR$ of~\cite{Pe16}, which shall be
mentioned in the present paper one more time) and formulate its basic properties.

\section{Basic definitions: (restricted) Lie  superalgebras, growth}\label{Sdef}

Superalgebras appear naturally in physics and mathematics~\cite{Kac77,Scheunert,BMPZ}.
By $K$ denote the ground field, $\langle S\rangle_K$ a linear span of a subset $S$ in a $K$-vector space.
Put $\Z_2=\{\bar 0,\bar 1\}$, the group of order 2.
A {\em superalgebra} $A$ is a $\Z_2$-graded algebra $A=A_{\bar 0}\oplus A_{\bar 1}$.
The elements $a\in A_\alpha$ are called {\em homogeneous of degree} $\deg a=\alpha$, $\alpha\in\Z_2$.
The elements of $A_{\bar 0}$ are {\em even}, those of $A_{\bar 1}$ {\em odd}.
In what follows, if $\deg a$ enters an expression,
then it is assumed that $a$ is homogeneous of degree $\deg a\in\Z_2$,
and the expression extends to the other elements by linearity.
Let $A,B$ be superalgebras, a {\em tensor product} $A\otimes B$ is the superalgebra
whose space is the tensor product of the spaces $A$ and $B$ with the induced $\Z_2$-grading and the product:
$$
(a_1\otimes b_1) (a_2\otimes b_2)=(-1)^{\deg b_1\cdot \deg a_2}a_1a_2\otimes b_1b_2,\quad a_i\in A,\ b_i\in B.
$$
An {\em associative superalgebra} $A$ is a
$\Z_2$-graded associative algebra $A=A_{\bar 0}\oplus A_{\bar 1}$.
A {\em Lie superalgebra} is a $\Z_2$-graded algebra $L=L_{\bar 0}\oplus L_{\bar 1}$ with an
operation $[\ ,\ ]$ satisfying the axioms:
\begin{itemize}
\item
$[x,y]=-(-1)^{\deg x\cdot \deg y }[y,x]$,\quad (super-anticommutativity);
\item
$[x,[y,z]]=[[x,y],z]+(-1)^{\deg x\cdot\deg y}[y,[x,z]]$,\quad (Jacobi identity).
\end{itemize}
All commutators in the present paper are supercommutators.
Long commutators are {\it right-normed}: $[x,y,z]=[x,[y,z]]$.
We use a standard notation $\ad x(y)=[x,y]$, where $x,y\in L$.

Assume that $A=A_{\bar 0}\oplus A_{\bar 1}$ is an associative superalgebra.
One obtains a Lie superalgebra $A^{(-)}$
by supplying the same vector space $A$  with a {\em supercommutator}:
$$
[x,y]=xy-(-1)^{\deg x\cdot \deg y}yx,\quad x,y\in A.
$$
If 
$A^{(-)}$ is abelian,
then 
$A$ is called {\it supercommutative}.
Let $L$ be a Lie superalgebra,
one defines a {\it universal enveloping algebra}
$U(L)=T(L)/(x\otimes y- (-1)^{\deg x\cdot\deg y}y\otimes x-[x,y]\mid x,y\in L)$, where
$T(L)$ is the tensor algebra of the vector space $L$.
Now, the product in $L$ coincides with the supercommutator in $U(L)^{(-)}$.
A basis of  $U(L)$ is given by PBW-theorem~\cite{BMPZ,Scheunert}.

In case $\ch K=2,3$ the axioms of the Lie superalgebra have to be augmented
(\cite[section 1.10]{BMPZ}, \cite{Bou-Leites-09}, \cite{Pe16}).
\begin{itemize}
\item ($\ch K= 3$)
$[z,[z,z]]= 0$,  $z\in L_{\bar 1}$.
\end{itemize}
Substituting $x=y\in L_{\bar 1}$ in the Jacobi identity,  we get $2 (\ad x)^2 z=[[x,x], z]$.
In case $\ch K\ne 2$ we get an identity
\begin{equation*}
(\ad x)^2 z= \frac 12 [[x,x],z],\qquad  x\in L_{\bar 1}, \ z\in L.
\end{equation*}
In the present paper we study Lie superalgebras of the form $A^{(-)}$,
they have squares: $[x,x]=2 x^2$, $x\in A_{\bar 1}^{(-)}$.
One obtains an identity that is also valid for $\ch K=2$:
\begin{equation}\label{squares}
(\ad x)^2 z= [x^2,z] 
,\qquad x\in A^{(-)}_{\bar 1},\ z\in A^{(-)}.
\end{equation}
In case $\ch K=2$, we add more axioms for the Lie superalgebras:
\begin{itemize}
\item there exists a {\em quadratic mapping (a formal square)}:
$(\ )^{[2]}:L_{\bar 1}\to L_{\bar 0}$, $x\mapsto x^{[2]}$, $x\in L_{\bar 1}$, satisfying:
\begin{align}
 (\lambda x)^{[2]}&=\lambda^2 x^{[2]},\qquad x\in L_{\bar 1},\ \lambda\in K;\nonumber\\
 (x+y)^{[2]}&=x^{[2]}+[x,y]+y^{[2]};\quad x,y\in L_{\bar 1}; \label{square2}\\
 (\ad x)^2 z &= [x^{[2]},z],\qquad  x\in L_{\bar 1},\  z\in L,\
 \text{(a formal substitute of~\eqref{squares})};\nonumber
\end{align}
\item $[x,x]=0$, $x\in L_{\bar 0}$.
By putting $y=x$ in the second relation above, we get $[y,y]=0$, $y\in L_{\bar 1}$.
\end{itemize}
In other words, a Lie superalgebra in case $\ch K=2$ is just a $\Z_2$-graded Lie algebra
supplied with a quadratic mapping
$L_{\bar 1}\to L_{\bar 0}$, which is similar to the  $p$-mapping (see below).
In case $p=2$, to get the universal enveloping algebra,
we additionally factor out $\{y\otimes y-y^{[2]}\mid y\in L_{\bar 1}\}$.

We shall consider also {\it Lie super-rings}. These are $\Z_2$-graded $\Z$-algebras whose
product satisfies the super-anticommutativity and the Jacobi identity.
Since Lie algebras over a field of $\ch K=2$ should be considered as such rings,
we also assume existence of the formal quadratic mapping satisfying the axioms above.

In all cases, the quadratic mapping will be denoted by $x^{2}$, $x\in L_{\bar 1}$,
it also coincides with the ordinary square in the universal (or restricted) enveloping algebra $U(L)$.

Let $V=V_{\bar 0}\oplus V_{\bar 1}$ be a vector space, we say that it is $\Z_2$-graded.
The associative algebra
of all endomorphisms of the space
$\End V$ is an associative superalgebra:
$\End V=\End_{\bar 0} V\oplus \End_{\bar 1} V$,
where $\End_\alpha V=\{\phi\in\End V\mid \phi(V_\beta)\subset V_{\alpha+\beta}, \beta\in\Z_2\}$, $\alpha\in\Z_2$.
Thus, $\End^{(-)}V$ is a Lie superalgebra, called the {\em general linear superalgebra} ${gl}(V)$.

Let $A=A_{\bar 0}\oplus A_{\bar 1}$ be a $\Z_2$-graded algebra of arbitrary signature.
A linear mapping $\phi\in\End_{\beta} A$, $\beta\in\Z_2$,
is a {\em superderivative} of degree $\beta$ if it satisfies
\begin{equation*}
\phi(a\cdot b)=\phi(a)\cdot b+(-1)^{\beta\cdot\deg(a)}a\cdot\phi(b),\quad a,b\in A.
\end{equation*}
Denote by $\Der_{\alpha}A\subset \End_{\alpha} A$ the space of all superderivatives of degree $\alpha\in\Z_2$.
One checks that $\Der A=\Der_{\bar 0} A\oplus \Der_{\bar 1} A$
is a subalgebra of the Lie superalgebra $\End^{(-)}A$.
In this paper by a derivation we always mean a superderivation.

\medskip

Let $L$ be a Lie algebra over a field $K$ of characteristic $p>0$.
Then $L$ is called a
\textit{restricted Lie algebra} (or \textit{Lie $p$-algebra}),
if it is additionally supplied with a unary operation
 $x\mapsto x^{[p]}$, $x\in L$, that satisfies the following axioms~\cite{JacLie,Strade1,StrFar}:
\begin{itemize}
\item $(\lambda x)^{[p]}=\lambda^px^{[p]}$, for $\lambda\in K$, $x\in L$;
\item $\ad(x^{[p]})=(\ad x)^p$, $x\in L$;
\item $(x+y)^{[p]}=x^{[p]}+y^{[p]}+\sum_{i=1}^{p-1}s_i(x,y)$, $x,y\in L$,
where $is_i(x,y)$~is the coefficient of $t^{i-1}$ in the polynomial
$\operatorname{ad}(tx+y)^{p-1}(x)\in L[t]$.
\end{itemize}
This notion is motivated by the following observation.
Let $A$ be an associative algebra over a field ~$K$, $\operatorname{char}K=p>0$.
Then the mapping  $x\mapsto x^p$, $x\in A^{(-)}$,
satisfies these conditions considered in the Lie algebra $A^{(-)}$.

A {\em restricted Lie superalgebra} $L=L_{\bar 0}\oplus L_{\bar 1}$ is a Lie superalgebra
such that the even component $L_{\bar 0}$ is a restricted Lie algebra and $L_{\bar 0}$-module
$L_{\bar 1}$ is restricted, i.e. $\ad(x^{[p]}) y=(\ad x)^p y$, for all $x\in L_{\bar 0}$, $y\in L_{\bar 1}$
(see. e.g.~\cite{Mikh88,BMPZ,Pe92}).
Remark that in case $\ch K=2$, the restricted Lie superalgebras and
$\Z_2$-graded restricted Lie algebras are the same objects.
(Let $L=L_{\bar 0}\oplus L_{\bar 1}$ be a restricted Lie superalgebra,
it has the $p$-mapping on the even part: $L_{\bar 0}\to L_{\bar 0}$ and the formal square
on the odd part: $L_{\bar 1}\to L_{\bar 0}$.
We obtain a $p$-mapping
on the whole of algebra by setting $(x+y)^{[2]}=x^{[2]}+y^{[2]}+[x,y]$, $x\in L_{\bar 0}$,  $y\in L_{\bar 1}$).

Let $L$ be a restricted Lie (super)algebra, and
$J$ the ideal of the universal enveloping algebra~$U(L)$
generated by all elements $x^{[p]}-x^p$, $x\in L_{\bar 0}$.
Then $u(L)=U(L)/J$ is the \textit{restricted enveloping algebra} of $L$.
In this algebra, the formal operation $x^{[p]}$ coincides with the ordinary power $x^p$ for all $x\in L_{\bar 0}$.
One has an analogue of PBW-theorem
describing a basis of $u(L)$ \cite[p.~213]{JacLie}, \cite{BMPZ}.

Let $L$ be a Lie (super)algebra. One defines the {\it lower central series} as
$L^1=L$ and $L^{n}=[L,L^{n-1}]$, $n\ge 2$. In case $\ch K=2$ the space above
is augmented by $\langle x^2\mid x\in (L^{[n/2]})_{\bar 1}\rangle_K$.
In case of a restricted Lie (super)algebra,
we also add $\langle x^p\mid x\in (L^{[n/p]})_{\bar 0}\rangle_K$.
\medskip

We recall the notion of {\em growth}. Let $A$  be an associative (or Lie) (super)algebra  generated by a finite set $X$.
Denote  by $A^{(X,n)}$ the subspace of $A$ spanned by all  monomials  in $X$
of length not  exceeding  $n$, $n\ge 0$.
In case of a Lie superalgebra of $\ch K=2$ we also consider formal squares of odd monomials of length at most $n/2$.
If $A$ is a restricted Lie algebra, put
$A^{(X,n)}=\langle\, [x_{1},\dots,x_{s}]^{p^k}\mid x_{i}\in X,\, sp^k\le n\rangle_K$~\cite{Pape01}.
Similarly, one defines the growth for restricted Lie superalgebras.
In either situation, one defines an  {\em (ordinary) growth function}:
$$
\gamma_A(n)=\gamma_A(X,n)=\dim_KA^{(X,n)},\quad n\ge 0.
$$
Let $f,g:\N\to\R^+$ be eventually increasing and positive valued functions.
Write $f(n)\preccurlyeq g(n)$ if and only if there exist positive constants $N,C$ such that $f(n)\le g(Cn)$
for all $n\ge N$.
Introduce equivalence $f(n)\sim g(n)$ if and only if  $f(n)\preccurlyeq g(n)$ and $g(n)\preccurlyeq f(n)$.
Different generating sets of an algebra yield equivalent growth functions~\cite{KraLen}.

It is well known that the
exponential growth is the highest possible growth for finitely generated Lie and
associative algebras. A growth function $\gamma_A(n)$ is
compared with polynomial functions $n^\alpha$, $\alpha\in\R^+$, by
computing the {\em upper and lower Gelfand-Kirillov
dimensions}~\cite{KraLen}:
\begin{align*}
\GKdim A&=\limsup_{n\to\infty} \frac{\ln\gamma_A(n)}{\ln n}=\inf\{\a>0\mid \gamma_A(n)\preccurlyeq n^\a\} ;\\
\LGKdim A&=\liminf_{n\to\infty}\,  \frac{\ln\gamma_A(n)}{\ln n}=\sup\{\a>0\mid \gamma_A(n)\succcurlyeq n^\a\}.
\end{align*}

Assume that generators $X=\{x_1,\dots,x_k\}$ are assigned positive weights $\wt(x_i)=\lambda_i$, $i=1,\dots,k$.
Define a {\it weight growth function}:
$$
\tilde \gamma_A(n)=\dim_K\langle x_{i_1}\cdots x_{i_m}\mid \wt(x_{i_1})+\cdots+\wt(x_{i_m})\le n,\
          x_{i_j}\in X\rangle_K,\quad n\ge 0.
$$
Set $C_1=\min\{\lambda_i\mid i=1,\dots,k \}$, $C_2=\max\{\lambda_i\mid i=1,\dots,k \}$,
then $\tilde\gamma_A(C_1 n) \le \gamma_A(n)\le \tilde\gamma_A(C_2 n)$ for $n\ge 1$.
Thus, we obtain an equivalent growth function $\tilde \gamma_A(n)\sim\gamma_A(n)$.
Therefore, we can use the weight growth function $\tilde\gamma_A(n)$ in order to
compute the Gelfand-Kirillov dimensions.
By $f(n)\approx g(n)$, $n\to\infty$, denote that $\mathop{\lim}\limits_{n\to\infty} f(n)/g(n)=1$.

Suppose that $L$ is a Lie (super)algebra and $X\subset L$.
By $\Lie(X)$ denote the subalgebra of $L$ generated by $X$
(including application of the quadratic mapping in case $\ch K=2$).
Let $L$ be a restricted Lie (super)algebra,
by $\Lie_p(X)$ denote the restricted subalgebra of $L$ generated by $X$.
Assume that $X$ is a subset of an associative algebra $A$.
Write $\Alg(X)\subset A$ to denote the associative subalgebra (without unit) generated by~$X$.
A grading of an algebra is called {\em fine} if
it cannot be splitted by taking a bigger grading group (see definitions in~\cite{BaSeZa01,Eld10}).

Assume that $I$ is a well-ordered set of arbitrary cardinality. Put $\Z_2=\{0,1\}$.
Let $\Z_2^I=\{\a: I\to\Z_2\}$ be the set of functions with finitely many nonzero values.
Suppose that $\a \in \Z_2^I$ has nonzero values only at $\{i_1,\dots,i_t\}\subset I$, where $i_1<\cdots< i_t$, put
$\mathbf x^\a=x_{i_1}x_{i_2}\cdots x_{i_t}$ and $|\a|=t$.
Now $\{ \mathbf x^\a\mid \a\in \Z_2^I\}$ is a basis of the Grassmann algebra $\Lambda_I=\Lambda(x_i\mid i\in I)$,
which is an associative superalgebra $\Lambda_I=\Lambda_{\bar 0}\oplus \Lambda_{\bar 1}$,
all $x_i$, $i\in I$, being odd.
Let $\partial_i$, $i\in I$, denote the superderivatives of
$\Lambda$ determined by  $\partial_i(x_j)=\delta_{ij}$, $i,j\in I$.
We identify $x_i$, $i\in I$, with operators
of left multiplication on $\Lambda_I$, thus we consider that
 $x_i\in \End(\Lambda_I)$, $i\in I$.
Consider the space of all formal sums
\begin{equation}\label{WW}
\WW(\Lambda_I)=\bigg\{\sum_{\a\in \Z_2^I} {\mathbf x}^{\a}\sum_{j=1}^{m(\a)}
                           \lambda_{\a,i_j}\,\partial_{i_j}
                    \ \bigg|\ \lambda_{\a,i_j}\in K,\ i_j\in I
                    \bigg\}.
\end{equation}
It is essential that the sum at each ${\mathbf x}^{\a}$, $\a\in \Z_2^I$,  is finite.
This construction is similar to so called Lie algebra of {\em special derivations},
see~\cite{Rad86}, \cite{Razmyslov}, \cite{PeRaSh}.
It is similarly verified that
the product on $\WW(\Lambda_I)$ is well defined and
the Lie superalgebra $\WW(\Lambda_I)$ acts on $\Lambda_I$ by superderivations.

\section{Main results: Lie superalgebra $\RR$ and its properties}\label{Smain}

Let $\Lambda=\Lambda(x_i\vert i\geq 0)$ be the Grassmann algebra.
The Grassmann letters and respective superderivatives
$\lbrace x_i,\dd_i\mid i\geq 0\rbrace$ are odd elements of the superalgebra $\End\Lambda$.
One has relations for all $i,j\ge 0$:
\begin{align*}
x_i x_j&=-x_j x_i, \quad  \dd_i\dd_j=-\dd_j\dd_i, \quad
     \dd_ix_j=-x_j\dd_i, \qquad i\neq j; \\
x_i^2&=\dd_i^2=0; \qquad
\dd_i x_i+x_i\dd_i=1.
\end{align*}
Our superalgebras will be constructed as subalgebras in $\WW(\Lambda)\subset \Der \Lambda$
or $\End(\Lambda)$.
Our goal is to study the following finitely generated (restricted) Lie superalgebra
and its associative hull.
\medskip

{\bf Example. }{\it
Let $\Lambda=\Lambda(x_i \mid i\ge 0)$. Consider the following elements in $\WW(\Lambda)$:
\begin{equation}\label{pivot}
v_i=\dd_i+x_{i}x_{i+1}(\dd_{i+2}+x_{i+2}x_{i+3}(\dd_{i+4}+x_{i+4}x_{i+5}(\dd_{i+6}+\ldots))),\qquad i\geq 0.
\end{equation}
We define a Lie superalgebra $\RR=\Lie(v_0,v_1)\subset\WW(\Lambda)$ and its associative hull
$\AA=\Alg(v_0,v_1)\subset \End \Lambda$.
In case $\ch K=2$, we assume that the quadratic mapping on odd elements is the square of
the respective operator in $\End\Lambda$.
} 
\medskip

We call $\{v_i\mid i\ge 0\}$ {\em pivot elements}, observe that they are odd in the Lie superalgebra $\WW(\Lambda)$.
We establish the following properties of $\RR=\Lie(v_0,v_1)$
and its associative hull $\AA=\Alg(v_0,v_1)$.

\begin{enumerate}
\renewcommand{\theenumi}{\roman{enumi}}
\item Section~\ref{Srelations} yields basic relations of $\RR$. 
\item $\RR$ has a monomial basis consisting of standard monomials of two types ($\ch K\ne 2$, Theorem~\ref{Tbase}).
      In case $\ch K=2$, a basis of $\RR$ is given by monomials of the first type
      and squares of the pivot elements (Corollary~\ref{Cchar2}), and $\RR$
      coincides with the restricted Lie algebra $\Lie_p(v_0,v_1)$.
\item We introduce two weight functions $\wt(\ )$, $\swt(\ )$  that are additive on products of monomials.
      Using these functions, we prove that $\RR$ and $\AA$ are $\Z^2$-graded
      by multidegree in the generators (Lemma~\ref{Lz2graduacao}).
      This allows us to introduce two coordinate systems on plane:
      multidegree coordinates $(X_1,X_2)$ and weight coordinates $(Z_1,Z_2)$.
      We introduce a weight $\N$-gradation and a degree (natural) $\N$-gradation,
      which components are also the factors of the lower central series
      (Section~\ref{Sweight}).
\item We find bounds on weights and superweights of the basis monomials of $\RR$ and $\AA$
      (Section~\ref{Sweightbounds} and Section~\ref{SweightboundsA})
      and prove that their monomials are in regions of plane
      bounded by pairs of logarithmic curves (Theorem~\ref{TcurvesL}, Figure~\ref{Fig1}, and Theorem~\ref{TcurvesA}).
\item The components of the $\Z^2$-grading of $\RR$ are at most one-dimensional (Theorem~\ref{Tfine}),
      thus, the $\Z^2$-grading of $\RR$ is fine.
      Almost all components of the weight $\N$-gradation of $\RR$ are two-dimensional (Corollary~\ref{Cnaturgrad}).
\item $\GKdim\RR=\LGKdim\RR=1$, moreover, $\RR$ has a linear growth.
      In case $\ch K\ne 2$, we establish an asymptotic for the ordinary growth function:
      $\gamma_\RR(m)\approx 3m$, as $m\to\infty$ (Theorem~\ref{TRgrowth}).
\item Moreover, let $\RR=\mathop{\oplus}\limits_{n=1}^\infty\RR_n$ be the $\N$-grading by degree in the generators,
      where also $\RR_n\cong \RR^n/\RR^{n+1}$, $n\ge 1$, are the lower central series factors.
      In case $\ch K\ne 2$, we prove that  $\RR$ is of finite width 4, namely,
      the coefficients $(\dim\RR_n| n\ge 1)$, are $\{2,3,4\}$ (Theorem~\ref{Twidth}).
      (Recall that both self-similar Lie superalgebras in~\cite{Pe16} were of infinite width).
      This is analogous to the fact that the Grigorchuk group is of finite width~\cite{Rozh96,BaGr00}.
\item Let $\ch K=2$, $\RR$ the Lie algebra generated by $\{v_0,v_1\}$ (i.e. without $p$-mapping), and
      $\RR=\mathop{\oplus}\limits_{n=1}^\infty\RR_n$ the $\N$-grading by degree in the generators.
      Then $\RR$ is of width~2. Moreover,
      the sequence $(\dim\RR_n | n\ge 1)$, starting with $n\ge 5$,
      consists of alternating parts of two types:
      a block $1,1$ followed either by 2 (a diamond) or by $2,2,2$ (a triplet of diamonds) (Corollary~\ref{Cdiamonds}).
      But, $\RR$ is not a thin Lie algebra and
      the sequence above is  eventually non-periodic.
\item Unlike many examples of (self-similar) (restricted) Lie (super)algebras studied before,
      this is the first example that we are able to find a clear monomial basis of the associative hull $\AA$ (Theorem~\ref{TbasisA}).
\item $\GKdim\AA=\LGKdim\AA=2$, moreover, we establish quadratic bounds on the growth functions
      (Theorem~\ref{TgrowthA}).
\item We study generating functions of $\RR$  (Section~\ref{Sfunctions}).
      The results and proofs on basis monomials of $\RR$ are illustrated by Figure~\ref{Fig1} and Figure~\ref{Fig2}.
\item As an analogue of the periodicity,
      we establish that homogeneous elements of
      the grading $\mathbf{R}=\mathbf{R}_{\bar 0}\oplus\mathbf{R}_{\bar 1}$ are $\ad$-nilpotent
      (Theorem~\ref{Tadnilpotente}).
      We have a triangular decomposition into a direct sum of three locally nilpotent subalgebras
      $\RR=\RR_+\oplus \RR_0\oplus \RR_-$ (Theorem~\ref{Tlocal}).
      In case $\ch K=0$ or $\ch K=2$, the associative hull $\AA$ is not nil;
      in case $\ch K=2$, the restricted Lie algebra $\RR=\Lie_p(v_0,v_1)$ is not nil (Lemma~\ref{Lnaonil}).
\item $\RR$ is just infinite~(Theorem~\ref{Tjustinf}) but not hereditary just infinite
      (Lemma~\ref{Lnonjustinf}).
      (Whether two Lie superalgebras of~\cite{Pe16} are just infinite is not known).
\item $\RR$ shows that an extension of~Theorem~\ref{TMarZel} (Martinez and Zelmanov~\cite{MaZe99})
      to the Lie superalgebras of characteristic zero is not valid.
      Such a counterexample of a nil finely $\Z^3$-graded Lie superalgebra
      of slow polynomial growth $\QQ$ was suggested before~\cite{Pe16}.
      The present example is more handy, because the Lie superalgebra $\RR$ is of linear growth, moreover, of finite width 4, and just infinite.
\end{enumerate}

The research is continued in~\cite{PeSh18-Jor},
where we study a different
just infinite, $\ad$-nil, finely $\Z^3$-graded, fractal, 3-generated Lie superalgebra and
suggest new constructions yielding interesting related nil graded fractal Poisson and Jordan superalgebras,
that are natural analogues of the Grigorchuk and Gupta-Sidki groups in that classes of algebras.

One can also apply that constructions to the present Lie superalgebra $\RR$
and obtain a Jordan superalgebra $\mathbf K$, that will be
3-generated, just infinite, nil $\Z^3$-graded, with at most one-dimensional $\Z^3$-components, of linear growth,
moreover, of finite width~4, namely, its $\N$-gradation by degree in the generators has a non-periodic pattern
of components of dimensions $\{0,2,3,4\}$, the field being arbitrary with $\ch K\ne 2$.
\section{Relations of Lie superalgebra $\RR$}\label{Srelations}

Recall that we consider the Grassmann algebra in infinitely many variables $\Lambda=\Lambda(x_i \mid i\ge 0)$
and its odd derivatives~\eqref{pivot}, called the pivot elements. We also present them recursively:
\begin{eqnarray}\label{pivot2}
v_i=\dd_i+x_{i}x_{i+1}v_{i+2},\qquad i\geq 0.
\end{eqnarray}
Below we use a notation $x_i\cdots x_j=x_{i}x_{i+1}\cdots x_{j-1}x_j$ in case $i\le j$,
otherwise the product equals 1.
Let $0\le n<m$ and $m-n$ is even. We extend the presentation above as:
\begin{equation}
\label{pivot3}
v_n=\dd_n+x_n x_{n+1}(\dd_{n+2}+x_{n+2}x_{n+3}(\dd_{n+4}+\ldots+x_{m-4}x_{m-3}(\dd_{m-2}+x_{m-2}x_{m-1}v_m)\cdots)).
\end{equation}
Observe that the action of
the pivot elements on the Grassmann letters can only produce smaller letters:
\begin{equation}\label{action}
v_n(x_k)=
\begin{cases}
0, & k<n;\\
1, & k=n;\\
0, & k=n+2l-1, \qquad l\ge 1;\\
x_n x_{n+1}\cdots x_{k-2}x_{k-1},\quad & k=n+2l, \qquad\qquad l\ge 1.
\end{cases}
\end{equation}
Define a {\it shift mapping}
$\tau:\Lambda\rightarrow\Lambda$, $\tau:\WW(\Lambda)\rightarrow\WW(\Lambda)$ by
$\tau(x_i)=x_{i+1}$ and $\tau(\dd_i)=\dd_{i+1}$ for $i\geq 0$.
Clearly, $\tau$ is an endomorphism such that $\tau(v_i)=v_{i+1}$, $i\ge 0$.

First, we obtain relations between neighbor pivot elements.
\begin{Lemma}\label{Lbasicas}
We have the following relations, $i\geq 0$:
\begin{enumerate}
\item $v_i^{2}=x_{i+1}v_{i+2}$;
\item $[v_i,v_i]=2x_{i+1}v_{i+2}$;
\item $[v_i,v_{i+1}]=-x_i v_{i+2}$;
\item $[v_i,[v_i,v_{i+1}]]=[v_i^2,v_{i+1}]=-v_{i+2}$;
\item $[v_i,v_{i+2}]=2x_i x_{i+1}x_{i+3} v_{i+4}$.
\end{enumerate}
\end{Lemma}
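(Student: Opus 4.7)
My plan is to treat each $v_i$ via the recursive form \eqref{pivot2} and compute all brackets inside the associative superalgebra $\End\Lambda$, where the supercommutator $[a,b]=ab-(-1)^{\deg a\cdot\deg b}ba$ unfolds into ordinary products. The key mechanical observation I will exploit is that, by \eqref{pivot}, each $v_k$ is a sum of monomials each involving only the odd operators $x_j,\dd_j$ with $j\ge k$ and each consisting of an odd number of such factors. Consequently, for every single odd factor $y\in\{x_j,\dd_j\}$ with $j<k$, the pass-through rule $yv_k=-v_ky$ holds, so $[y,v_k]=0$ in the Lie superalgebra. Combined with the Grassmann nilpotencies $x_j^2=\dd_j^2=0$ and the sole nontrivial anticommutator $\dd_jx_j+x_j\dd_j=1$, this reduces every calculation to a handful of surviving terms.

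For (i) I would expand $v_i^2=(\dd_i+x_ix_{i+1}v_{i+2})^2$. The term $\dd_i^2$ vanishes; the two cross terms $\dd_ix_ix_{i+1}v_{i+2}+x_ix_{i+1}v_{i+2}\dd_i$ collapse to $x_{i+1}v_{i+2}$ (substituting $\dd_ix_i=1-x_i\dd_i$ and applying pass-through, the first cross term equals $x_{i+1}v_{i+2}-x_ix_{i+1}v_{i+2}\dd_i$, cancelling the second); the self-product $(x_ix_{i+1}v_{i+2})^2$ vanishes because commuting the second $v_{i+2}$ to the left past $x_ix_{i+1}$ produces a Grassmann square $x_i^2=0$. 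Relation (ii) is then immediate since $[v_i,v_i]=2v_i^2$ in a Lie superalgebra.

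For (iii) I would expand $[v_i,v_{i+1}]=v_iv_{i+1}+v_{i+1}v_i$ via the recursive form and split it into four supercommutators. Three of them vanish: $[\dd_i,\dd_{i+1}]=0$ directly; $[\dd_i,x_{i+1}x_{i+2}v_{i+3}]=0$ by the pass-through rule (nothing uses index $i$); and $[x_ix_{i+1}v_{i+2},x_{i+1}x_{i+2}v_{i+3}]=0$ via a short super-Leibniz reduction in which the only nontrivial inner bracket brings in $x_{i+1}^2=0$. The single surviving term $[x_ix_{i+1}v_{i+2},\dd_{i+1}]$ — in which $\dd_{i+1}$ acts nontrivially only on the factor $x_{i+1}$ via $\dd_{i+1}x_{i+1}=1-x_{i+1}\dd_{i+1}$ — evaluates to $-x_iv_{i+2}$.

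For (iv) the first equality is identity \eqref{squares}, valid in any characteristic (in $\ch K=2$ one reads $v_i^2$ as the formal square $v_i^{[2]}$). Substituting (i) gives $[v_i^2,v_{i+1}]=[x_{i+1}v_{i+2},v_{i+1}]$, and super-Leibniz (noting that $x_{i+1}v_{i+2}$ is even) expands this to $x_{i+1}[v_{i+2},v_{i+1}]-[x_{i+1},v_{i+1}]v_{i+2}$. Using (iii) for $[v_{i+2},v_{i+1}]=[v_{i+1},v_{i+2}]=-x_{i+1}v_{i+3}$ and computing $[x_{i+1},v_{i+1}]=\dd_{i+1}x_{i+1}+x_{i+1}\dd_{i+1}=1$ (the stray $x_{i+1}^2$ terms cancelling) yields $-x_{i+1}^2v_{i+3}-v_{i+2}=-v_{i+2}$. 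For (v), linearity and $[\dd_i,v_{i+2}]=0$ reduce the task to $[x_ix_{i+1}v_{i+2},v_{i+2}]$; two applications of super-Leibniz together with the pass-through brackets $[x_i,v_{i+2}]=[x_{i+1},v_{i+2}]=0$ collapse this to $x_ix_{i+1}[v_{i+2},v_{i+2}]=2x_ix_{i+1}v_{i+2}^2$, and then (i) delivers $2x_ix_{i+1}x_{i+3}v_{i+4}$. The only genuine obstacle throughout is uniform sign-tracking, and the pass-through rule is the single device that keeps the bookkeeping under control.
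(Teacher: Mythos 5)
Your proposal is correct and follows essentially the same route as the paper: expand each $v_i$ by the recursion $v_i=\dd_i+x_ix_{i+1}v_{i+2}$, kill terms using the pass-through (supercommutation) rule and the Grassmann nilpotencies $x_j^2=\dd_j^2=0$, and evaluate the single surviving term in each bracket, with (ii) from $[v_i,v_i]=2v_i^2$ and (iv) from the identity $(\ad x)^2z=[x^2,z]$. The only (immaterial) deviation is in (iv), where you compute $[v_i^2,v_{i+1}]=[x_{i+1}v_{i+2},v_{i+1}]$ via (i), whereas the paper computes $[v_i,[v_i,v_{i+1}]]=[v_i,-x_iv_{i+2}]$ via (iii); both give $-v_{i+2}$ by the same bookkeeping.
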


\begin{proof}
To prove~(i) we use~\eqref{pivot2} and~\eqref{square2}:
\begin{eqnarray*}
v_i^2=( \dd_i+x_i x_{i+1}v_{i+2})^2
 = \dd_i^2+(x_ix_{i+1}v_{i+2})^2+[\dd_i,x_i x_{i+1}v_{i+2}]=x_{i+1}v_{i+2}.
\end{eqnarray*}
Now (ii) follows by $[v_i,v_i]=2v_i^2$.
The remaining claims are checked below
\begin{align*}
[{v_i,v_{i+1}}]
& =[\dd_i+x_i x_{i+1}v_{i+2},\dd_{i+1}+x_{i+1}x_{i+2}v_{i+3}]
  =[x_i x_{i+1}v_{i+2},\dd_{i+1}]=-x_i v_{i+2};\\
 [{v_i^2,v_{i+1}}]
 &=[v_i,[v_i,v_{i+1}]]=[\dd_i+x_i x_{i+1}v_{i+2},-x_i v_{i+2}]=-v_{i+2};\\
[v_i,v_{i+2}]
 &=[\dd_i+x_i x_{i+1}v_{i+2},v_{i+2}]=x_i x_{i+1}[v_{i+2},v_{i+2}]=2x_i x_{i+1}x_{i+3}v_{i+4}.
\qedhere
\end{align*}
\end{proof}

Consider general products of the pivot elements.
\begin{Lemma} \label{Lprodutos}
For any integers $i,k\geq 0$, we have:
\begin{enumerate}
\item if $k$ is even, then $[v_i,v_{i+k}]=2x_i\cdots x_{i+k-1}x_{i+k+1}v_{i+k+2}$;
\item if $k$ is odd, then $[v_i,v_{i+k}]=-x_i\cdots x_{i+k-1}v_{i+k+1}$.
\end{enumerate}
\end{Lemma}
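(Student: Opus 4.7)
The plan is to induct on $k$, taking Lemma~\ref{Lbasicas} as the base and the recursive presentation $v_i=\dd_i+x_ix_{i+1}v_{i+2}$ from \eqref{pivot2} as the engine of the induction step.

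The base cases $k=0$ and $k=1$ are exactly parts (ii) and (iii) of Lemma~\ref{Lbasicas}: for $k=0$ the claimed formula $2x_i\cdots x_{i+k-1}x_{i+k+1}v_{i+k+2}$ reads $2x_{i+1}v_{i+2}$ (the empty product of $x$'s equals $1$), and for $k=1$ it reads $-x_iv_{i+2}$. For the inductive step, assuming $k\ge 2$, I would first establish the reduction identity
\[
[v_i,v_{i+k}]=x_ix_{i+1}\,[v_{i+2},v_{i+k}].
\]
Substituting $v_i=\dd_i+x_ix_{i+1}v_{i+2}$ and expanding by the super-Leibniz rule, this reduces to the two supercommutation facts $[\dd_i,v_{i+k}]=0$ and $[x_ix_{i+1},v_{i+k}]=0$. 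Both follow because $v_{i+k}$ is a formal sum in the symbols $\dd_j$ and $x_j$ with $j\ge i+k\ge i+2$: on the one hand $\dd_i$ anticommutes with $\dd_j$ and with $x_j$ for $j\ne i$, and on the other hand the identification $[v_{i+k},x_j]=v_{i+k}(x_j)$ as operators on $\Lambda$, together with \eqref{action}, gives $v_{i+k}(x_j)=0$ for $j<i+k$.

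Once the reduction identity is in hand, the induction closes immediately. Applying it to the inner bracket $[v_{i+2},v_{(i+2)+(k-2)}]$ preserves the parity of $k$, so by induction this inner bracket equals $2x_{i+2}\cdots x_{i+k-1}x_{i+k+1}v_{i+k+2}$ when $k$ is even and $-x_{i+2}\cdots x_{i+k-1}v_{i+k+1}$ when $k$ is odd. Multiplying by $x_ix_{i+1}$ on the left restores the missing initial factors and yields the desired formulas in (i) and (ii). Note that the $k=2$ case of (i) recovers Lemma~\ref{Lbasicas}(v) through the chain $[v_i,v_{i+2}]=x_ix_{i+1}[v_{i+2},v_{i+2}]=x_ix_{i+1}\cdot 2x_{i+3}v_{i+4}$, so no separate verification is needed.

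The main obstacle is the careful verification of the two supercommutation claims used in the reduction identity; once those are settled, the rest is a mechanical parity-preserving induction. A minor bookkeeping point is to remember the Leibniz sign $[ab,c]=a[b,c]+(-1)^{|b||c|}[a,c]b$ when $a=x_ix_{i+1}$ is even and $b,c$ are odd, but since the relevant bracket $[x_ix_{i+1},v_{i+k}]$ vanishes this sign never enters the final formula.
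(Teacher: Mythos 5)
Your proposal is correct and takes essentially the same approach as the paper: the paper's proof brackets the fully expanded presentation~\eqref{pivot3} of $v_i$ against $v_{i+k}$ in one shot, kills all cross terms by exactly the supercommutation facts you verify ($\dd_j$ and the Grassmann factors with indices below $i+k$ supercommute with $v_{i+k}$ to zero), and then applies Lemma~\ref{Lbasicas}~(ii) or~(iii). Your step-two induction via~\eqref{pivot2} is just the iterated form of that single expansion, so the two arguments rest on identical ingredients.
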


\begin{proof}
Let $k$ be even, we use item (ii) of Lemma~\ref{Lbasicas}, and expanded presentation~\eqref{pivot3}:
\begin{align*}
[v_i,v_{i+k}]
 &=[\dd_i+x_ix_{i+1}\dd_{i+2}+\ldots+x_i\cdots x_{i+k-3}\dd_{i+k-2}+  x_i\cdots x_{i+k-1}v_{i+k},v_{i+k}]\\
 &=x_i\cdots x_{i+k-1}[v_{i+k},v_{i+k}]= 2x_i\cdots x_{i+k-1}x_{i+k+1}v_{i+k+2}.
\end{align*}
Let $k$ be odd. We use item (iii) of Lemma~\ref{Lbasicas}
\begin{align*}
[v_i,v_{i+k}]
&=[\dd_i+x_ix_{i+1}\dd_{i+2}+\ldots+x_i\cdots x_{i+k-4}\dd_{i+k-3}+x_i\cdots x_{i+k-2}v_{i+k-1},v_{i+k}]\\
&=x_i\cdots x_{i+k-2}[v_{i+k-1},v_{i+k}]=-x_i\cdots x_{i+k-1}v_{i+k+1}.\qedhere
\end{align*}
\end{proof}

Consider Lie superalgebras generated by two consecutive pivot elements: $L_i=\Lie(v_i,v_{i+1})$, $i\geq 0$,
in particular, $L_0=\RR$.
\begin{Lemma}\label{Lpivot}
Consider the Lie superalgebra $\RR=\Lie(v_0,v_1)$. We have
\begin{enumerate}
\item $v_i\in\RR$, $i\geq 0$ (moreover, we get these elements using Lie bracket only in case of arbitrary $K$);
\item $\tau^i:\RR\rightarrow L_i$ is an isomorphism for all $i\geq 0$;
\item $\RR$ is infinite dimensional.
\end{enumerate}
\end{Lemma}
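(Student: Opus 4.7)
The three claims build on one another, so my plan is to dispatch (i) first and then use it to deduce (ii) and (iii). For (i), I would induct on $i$. The base case $v_0, v_1 \in \RR$ is the definition, and for the inductive step Lemma~\ref{Lbasicas}(iv) supplies the key recursion
\[
[v_i, [v_i, v_{i+1}]] = -v_{i+2}.
\]
Once $v_i, v_{i+1} \in \RR$, this double bracket produces $v_{i+2} \in \RR$. The parenthetical claim concerning arbitrary characteristic deserves a remark: although the intermediate identity $[v_i,[v_i,v_{i+1}]] = [v_i^{2}, v_{i+1}]$ in Lemma~\ref{Lbasicas}(iv) refers to the square $v_i^{2}$, the outer expression $[v_i, [v_i, v_{i+1}]]$ is a pure iterated Lie bracket. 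Hence the chain $v_2, v_3, \ldots$ is produced inside the Lie subalgebra $\Lie(v_0,v_1)$ itself, without invoking the quadratic mapping, even in $\ch K = 2$.

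For (ii), I would observe that the shift is induced by the relabeling $x_i \mapsto x_{i+1}$, $\dd_i \mapsto \dd_{i+1}$ on $\Lambda$, which naturally extends to a Lie superalgebra endomorphism of $\WW(\Lambda)$. This $\tau$ is injective on $\WW(\Lambda)$ because the relabeling is injective on the monomial generators appearing in the presentation~\eqref{WW}. Since $\tau(v_j) = v_{j+1}$ for all $j \ge 0$, one has
\[
\tau^i(\RR) = \tau^i(\Lie(v_0,v_1)) = \Lie(v_i, v_{i+1}) = L_i,
\]
giving surjectivity. Injectivity of $\tau^i$ restricted to $\RR$ follows from its injectivity on the ambient $\WW(\Lambda)$, so $\tau^i \colon \RR \to L_i$ is an isomorphism.

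For (iii), by (i) we have $\{v_i \mid i \geq 0\} \subset \RR$. From the defining formula~\eqref{pivot}, each pivot element has the form $v_i = \dd_i + w_i$, where $w_i \in \WW(\Lambda)$ is a sum of terms each of which contains at least one Grassmann factor $x_j$ with $j \ge i$. In the unique presentation~\eqref{WW} the partial derivatives $\dd_0, \dd_1, \dd_2, \ldots$ are linearly independent, and any nontrivial relation $\sum \lambda_i v_i = 0$ would force the ``constant part'' $\sum \lambda_i \dd_i$ to vanish, hence all $\lambda_i = 0$. Therefore $\{v_i\}_{i\ge 0}$ is linearly independent in $\RR$, so $\dim_K \RR = \infty$.

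I do not expect any serious obstacle here. The only subtlety is the characteristic-two parenthetical in (i), which is handled by noting that the iteration uses only the well-defined double Lie bracket. Everything else amounts to unpacking the definitions of $\tau$ and of $\WW(\Lambda)$ and invoking the recursion from Lemma~\ref{Lbasicas}(iv).
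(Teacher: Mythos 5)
Your proposal is correct and follows essentially the same route as the paper, whose entire proof is the single line ``We use claim (iv) of Lemma~\ref{Lbasicas}'' --- i.e.\ the recursion $[v_i,[v_i,v_{i+1}]]=-v_{i+2}$ that drives your induction in (i). Your added details (the pure-bracket observation handling $\ch K=2$, injectivity of $\tau$ on $\WW(\Lambda)$, and linear independence of the $v_i$ via their $\dd_i$-components) are exactly the routine verifications the paper leaves implicit.
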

\begin{proof}
We use claim (iv) of Lemma~\ref{Lbasicas}.
\end{proof}

The notion of self-similarity plays an important role in group theory~\cite{Grigorchuk00horizons,Nekr05}.
The Fibonacci Lie algebra is "self-similar"~\cite{PeSh09} but not in terms of
the definition of self-similarity given by Bartholdi~\cite{Bartholdi15}.
Namely, a Lie algebra $L$ is called {\it self-similar} if it affords a homomorphism~\cite{Bartholdi15}:
$$\psi:L\rightarrow\Der R\rightthreetimes (R\otimes L),$$
where $R$ is a commutative algebra, $\Der R$ its Lie algebra of derivations naturally acting on $R$.
This definition easily extends to Lie superalgebras, namely,
one considers $R$ to be a supercommutative associative superalgebra.
For example, a self-similarity embedding should look like:
$$\RR\hookrightarrow\langle\dd_0\rangle_K\rightthreetimes\big(\Lambda(x_0)\otimes \tau(\RR)\big).$$
Unlike two examples of self-similar Lie superalgebras~\cite{Pe16} and a family that includes many
nil self-similar restricted Lie algebras~\cite{Pe17},
it seems that such a self-similarity embedding for $\RR$ does not exist.

\section{Monomial basis of Lie superalgebra $\RR$}

Now our goal is to describe a clear monomial
basis for the (restricted) Lie superalgebra $\RR=\Lie(v_0,v_1)$ for arbitrary field~$K$.
Let us introduce a notation widely used below.
By $r_n$ denote a {\it tail} monomial:
\begin{eqnarray}\label{cauda}
r_n=x_0^{\xi_0}\cdots x_n^{\xi_n}\in \Lambda, \quad \xi_i\in\{ 0,1\};\quad n\geq 0.
\end{eqnarray}
For $n<0$ we assume that $r_n=1$. If needed, other monomials of type \eqref{cauda} will be denoted by $r'_n,r''_n$, etc.

\begin{Theorem}\label{Tbase}
Let $\ch K\neq 2$. A basis of the Lie superalgebra $\RR=\Lie(v_0,v_1)$ is given by
\begin{enumerate}
\item standard monomials of the first type:
$$ \{ r_{n-2}v_n\mid n\geq 0\};$$
\item standard monomials of the second type:
$$ \{ r_{n-3}x_{n-1}v_n\mid n\geq 2\}\setminus\{ x_0x_2v_3\}. $$
\end{enumerate}
\end{Theorem}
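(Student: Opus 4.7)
The argument divides into three parts: \emph{(a)} the standard monomials listed in (i), (ii) are linearly independent; \emph{(b)} each of them lies in $\RR$; \emph{(c)} their $K$-linear span $V$ is invariant under $\ad v_0$ and $\ad v_1$. Parts \emph{(b)}--\emph{(c)} give $V=\RR$, and \emph{(a)} upgrades this to a basis. For \emph{(a)}, expanding via \eqref{pivot3} shows that $r_{n-2}v_n$ has lowest-index differential part $r_{n-2}\dd_n$ and $r_{n-3}x_{n-1}v_n$ has lowest-index differential part $r_{n-3}x_{n-1}\dd_n$; distinct standard monomials therefore produce distinct (Grassmann monomial)$\cdot\dd_m$ leading terms in the natural basis of $\WW(\Lambda)$, so they are $K$-linearly independent.

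For \emph{(b)}, I would induct on the pivot index $n$. The base cases $n\le 2$ follow from Lemma \ref{Lbasicas}: $v_2=-[v_0,[v_0,v_1]]$, $x_0v_2=-[v_0,v_1]$, and $x_1v_2=\tfrac12[v_0,v_0]$. For the inductive step, the pivot $v_n\in\RR$ by Lemma \ref{Lpivot}(i), and any admissible tail is attached by applying the super-Leibniz identity
\[
[v_j,\, r\,v_k] \;=\; v_j(r)\,v_k + (-1)^{|r|}\, r\,[v_j,v_k]
\]
to previously realized standard monomials, with $v_j(r)$ computed from the action formula \eqref{action} and $[v_j,v_k]$ taken from Lemma \ref{Lprodutos}. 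Suitable choices of $j$ and $k$ either insert a missing $x_i$ factor into the tail or promote the pivot $v_k$ to $v_n$; iterating realizes every admissible tail.

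For \emph{(c)}, the same Leibniz expansion applied to $[v_j,w]$ with $j\in\{0,1\}$ and $w$ standard produces a sum of products $r'\,v_m$: terms with a repeated Grassmann letter collapse by $x_i^2=0$, and every surviving term fits one of the two standard shapes. A case split by $j$, by the parity of $n-j$, and by whether $x_j$ already occurs in the tail verifies $\ad v_j(V)\subseteq V$. The same analysis simultaneously forces $x_0x_2v_3\notin\RR$: every bracket that could realize this monomial picks up either an $x_1^2$ or an $x_2^2$ factor, because $v_0$ and $v_1$ only act on higher Grassmann letters through the consecutive blocks dictated by \eqref{action}, so the gap at index $1$ between $x_0$ and $x_2$ cannot be bridged without a square-cancellation. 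The main obstacle is precisely this uniform case analysis together with isolating the single sporadic exclusion: the generic pattern is stable, governed by the recursion $v_i=\dd_i+x_ix_{i+1}v_{i+2}$, but confirming that $x_0x_2v_3$ is the unique tail to drop out --- and that no analogous coincidence removes any declared basis element --- is a finite but intricate bookkeeping task.
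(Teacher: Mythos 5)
Your overall skeleton (linear independence via leading terms, membership, and closure of the span under $\ad v_0,\ad v_1$, which suffices because a Lie superalgebra is spanned by right-normed brackets in its generators) is sound, and parts of it track the paper's proof, which shows membership by induction and then closure under \emph{all} products of standard monomials. However, there is a genuine gap at the point you yourself flag as "intricate bookkeeping": the exclusion of the false monomial $x_0x_2v_3$. Your proposed reason --- that "the gap at index $1$ between $x_0$ and $x_2$ cannot be bridged without a square-cancellation" --- is not a valid principle. Standard monomials with exactly this gap do lie in $\RR$: for instance $x_0x_2v_4$ is a standard monomial of the first type (tail $r_2=x_0x_2$), and so is $x_0x_2x_4v_6$, etc. Moreover, gaps are routinely created by the derivative terms in the Leibniz expansion, which \emph{delete} letters: e.g.\ $[v_1,\,x_0x_1x_2v_3]=\pm x_0x_2v_3$, so excluding the false monomial really hinges on showing $x_0x_1x_2v_3\notin\RR$, which your block argument does not address. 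The paper's actual argument is of a completely different nature: using the additive weight functions (Lemma~\ref{Lpesos}) and the resulting $\Z^2$-grading by multidegree (Lemma~\ref{Lz2graduacao}), one computes $\Gr(x_0x_2v_3)=(-1,2)$ (Lemma~\ref{monomiofalso}); since every homogeneous component of an element of $\Lie(v_0,v_1)$ has multidegree in $\N^2$, the false monomial can never occur. Without this (or an equivalent rigorous substitute), neither your closure step \emph{(c)} nor the uniqueness-of-exclusion claim can be completed, and the theorem's precise statement is exactly about this one exception.

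A secondary weakness is in \emph{(b)}: the elementary operation "insert a missing $x_i$ factor into the tail" does not exist. The Leibniz rule only offers (1) appending a \emph{consecutive block} $x_j\cdots x_{k-1}$ tied to a pivot promotion, via $r\,[v_j,v_k]$ (Lemma~\ref{Lprodutos}), and (2) deletion or block-replacement of letters via the action $v_j(r)$. The paper's induction is built on precisely these moves: for type~1, $[v_{n-2}^2,\,r_{n-3}v_{n-1}]=-r_{n-3}v_n$ (promotion preserving the tail) and $[v_{n-2},\,r_{n-3}v_{n-1}]=\mp r_{n-3}x_{n-2}v_n$ (promotion adjoining $x_{n-2}$); for type~2 it first produces the \emph{fullest} monomial $x_0\cdots x_{n-3}x_{n-1}v_n$ from $[v_0,v_{n-2}]$ ($n$ even) or $[v_1,x_0v_{n-2}]$ ($n$ odd), and then \emph{deletes} unwanted letters by bracketing with the pivots $v_i$, $0\le i\le n-3$. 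Reformulating your step \emph{(b)} along these lines would repair it, but the false-monomial issue above still requires the weight argument.
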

\begin{proof}
Let us call $n$ a {\it length}, $v_n$ a {\it head}, $r_{n-2}$  ($r_{n-3}$, sometimes, $r_{n-3}x_{n-1}$) a {\it tail},
and the optional letter $x_{n-1}$ a {\it neck} of the monomial.
We call $x_0x_2v_3$ a {\em false monomial}.

First, we prove that our monomials belong to $\RR$.
We show by induction on length $n$ that all monomials of the first type belong to $\RR$.
Clearly, such monomials of lengths $n=0,1$, namely, $v_0,v_1$, belong to $\RR$.
Consider $n\geq 2$, by inductive assumption,
all elements $r_{k-2}v_k$ with $0\leq k\leq n-1$ belong to $\RR$. Consider
\begin{align*}
[v_{n-2}^2,r_{n-3}v_{n-1}]&=r_{n-3}[v_{n-2}^2, v_{n-1}]=-r_{n-3}v_n,\\
[v_{n-2},r_{n-3}v_{n-1}]&=\pm r_{n-3}[v_{n-2},v_{n-1}]=\mp r_{n-3}x_{n-2}v_n.
\end{align*}
We conclude that all monomials of the first type of length $n$ belong to $\RR$.

Consider monomials of the second type.
By $[v_0,v_0]=2x_1 v_2$, $[v_1,v_1]=2x_2 v_3$, we obtain all such monomials of lengths 2,3.
Let $n\ge 4$. Depending on parity of $n$, using Lemma~\ref{Lprodutos}, we get
\begin{align*}
[v_0,v_{n-2}]& =2x_0\cdots x_{n-3}x_{n-1}v_n\in\RR, \hskip3.4cm  n\ \text{even};\\
[v_1,x_0 v_{n-2}]&=-x_0[v_1,v_{n-2}]=-2x_0x_1\cdots x_{n-3}x_{n-1}v_n\in \RR, \quad  n\ \text{odd}.
\end{align*}
Multiplying these elements by $v_i$, where $0\leq i\leq n-3$, we can delete any factors $x_i$ above.
We conclude that all monomials of the second type of length $n$ belong to $\RR$.
\medskip

Second, we prove that commutators of the standard monomials are expressed via the standard monomials.
Some efforts below are necessary to verify that the false monomial $x_0x_2v_3$ shall not appear.
Sometimes, we write standard monomials as $r_{n-1}v_n$.

A) Consider products of standard monomials of the same length $n$.
\begin{eqnarray*}
[r_{n-1}v_n,r'_{n-1}v_n]=\pm 2r''_{n-1}x_{n+1}v_{n+2},
\end{eqnarray*}
a standard monomial of the second type, except eventually for $n=1$.
But the only possible such product is $[v_1,v_1]=2x_2v_3$,
and the false monomial is not possible.
The false monomial cannot appear here and below by weight arguments, see below Lemma~\ref{monomiofalso}.

B) Consider two standard monomials of different lengths $u=r_{n-1}v_n$, $v=r'_{m-1}v_m$, where $n<m$.

B1) Assume that $n\equiv m({\rm mod}\ 2)$. We use presentation~\eqref{pivot3}:
\begin{align}
[u,v]&=[r_{n-1}v_n,r'_{m-1}v_m]\nonumber\\
&=\Big[r_{n-1}\Big(\dd_n+x_n x_{n+1}\dd_{n+2}+\ldots
  +x_n \cdots x_{m-3}\dd_{m-2}
  +x_n \cdots x_{m-1}v_m\Big),r'_{m-1}v_m\Big]\nonumber\\
\label{mmmm}
&=r_{n-1}\Big(\dd_n(r'_{m-1})+x_n x_{n+1}\dd_{n+2}(r'_{m-1}) +\ldots
  +x_n\cdots x_{m-3}\dd_{m-2}(r'_{m-1})\Big)v_m\\
&\quad \pm r_{n-1}x_n \cdots x_{m-1}r'_{m-1}[v_m,v_m].
\nonumber
\end{align}
The last summand equals $\pm 2r''_{m-1}x_{m+1}v_{m+2}$, which is of the second type.
Assume that $v$ is of the first type, namely, $v=r'_{m-1}v_m=r''_{m-2}v_m$,
then all monomials in~\eqref{mmmm} are of the first type.
Assume that $v$ is of the second type, namely, $v=r'_{m-1}v_m=r''_{m-3} x_{m-1}v_m$,
then all summands~\eqref{mmmm} are monomials of the second type
because the derivatives cannot remove the letter $x_{m-1}$ while the remaining factors
yield $\tilde r_{m-3}$.

B2) Assume that $n\not\equiv m ({\rm mod}\ 2)$. Again, we use~\eqref{pivot3}:
\begin{align*}
[u,v]&=[r_{n-1}v_n,r'_{m-1}v_m]\\
&=\Big[r_{n-1}\Big(\dd_n+x_n x_{n+1}\dd_{n+2}+\ldots
  +x_n \cdots x_{m-4}\dd_{m-3}
  +x_n \cdots x_{m-2}v_{m-1}\Big),r'_{m-1}v_m\Big]\\
&=r_{n-1}\Big(\dd_n(r'_{m-1})+x_n x_{n+1}\dd_{n+2}(r'_{m-1}) +\ldots
  +x_n\cdots x_{m-4}\dd_{m-3}(r'_{m-1})\Big)v_m\\
&\quad \pm r_{n-1}x_n \cdots x_{m-2}r'_{m-1}[v_{m-1},v_m].
\end{align*}
The last summand equals $\mp \tilde r_{m-1}x_{m-1}v_{m+1}$, which is of the first type.
The remaining monomials have the same type as that of $v$ by the arguments above.
%
\end{proof}

\begin{Corollary}\label{C_Lie_subring}
Let $\ch K=0$.
Consider $\Lie_\Z(v_0,v_1)$, the Lie super-subring in $\RR=\Lie(v_0,v_1)$,
generated by $v_0,v_1$ using coefficients in $\Z$.
It has the following $\Z$-basis:
\begin{enumerate}
\item standard monomials of the first type;
\item  $\lbrace x_{n-1}v_n\mid n\geq 2\rbrace$ (squares of the pivot elements);
\item $\{2r_{n-3}x_{n-1}v_n\mid r_{n-3}\ne 1,\ r_{n-3}x_{n-1}v_n\ne x_0x_2v_3,\ n\geq 2\}$
($2$-multiples of the remaining standard monomials of the second type).
\end{enumerate}
\end{Corollary}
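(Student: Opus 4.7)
The plan is to refine the proof of Theorem~\ref{Tbase} by tracking integer coefficients at every step. A crucial observation is that, by the definition in Section~\ref{Sdef}, a Lie super-ring comes equipped with the formal quadratic mapping on odd elements, so the square $v^{2}$ of any odd $v\in\Lie_\Z(v_0,v_1)$ lies in $\Lie_\Z(v_0,v_1)$; this is essential because over $\Z$ one cannot form $\tfrac12[v,v]$ from brackets alone. The relation $v_{n+2}=-[v_n,[v_n,v_{n+1}]]$ from Lemma~\ref{Lbasicas}(iv) inductively places every pivot $v_n$ in $\Lie_\Z(v_0,v_1)$ with coefficient $\pm 1$, and applying the formal square then produces every class-(ii) basis element $x_{n-1}v_n=v_{n-2}^{2}$.

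Class (i) is then handled by exactly the induction of Theorem~\ref{Tbase}: the identity $[v_{n-2}^{2},r_{n-3}v_{n-1}]=-r_{n-3}v_n$ exploits the formal square together with the inductively constructed $r_{n-3}v_{n-1}$, and $[v_{n-2},r_{n-3}v_{n-1}]=\mp r_{n-3}x_{n-2}v_n$ fills in the remaining monomials; both brackets have coefficient $\pm 1$ over $\Z$. For class (iii) I begin with the identities from Lemma~\ref{Lprodutos}: $[v_0,v_{n-2}]=\pm 2x_0\cdots x_{n-3}x_{n-1}v_n$ for even $n$, and $[v_1,x_0v_{n-2}]=\pm 2x_0x_1\cdots x_{n-3}x_{n-1}v_n$ for odd $n$, where the factor of $2$ is forced by the hidden square $[v_j,v_j]=2v_j^{2}$. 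I then iterate $\ad v_i$ for $i\in\{0,\ldots,n-3\}$ to delete each intended $x_i$, exactly as in the proof of Theorem~\ref{Tbase}; the superderivation expansion picks out the desired $x_i$-less summand with coefficient $\pm 1$, while every other summand vanishes thanks to $x_j^{2}=0$ and the parity constraints of the action rule~\eqref{action}. Thus the overall coefficient $\pm 2$ is preserved throughout, producing each allowed class-(iii) element; the false monomial $x_0x_2v_3$ is excluded by the same weight argument already used in Theorem~\ref{Tbase}.

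Closure of the $\Z$-span under brackets follows from the case analysis of Theorem~\ref{Tbase}: Case~A (equal lengths) gives $\pm 2r''x_{n+1}v_{n+2}$, which is either $\pm 2$ times a class-(ii) element (if $r''=1$) or $\pm 1$ times a class-(iii) element; Cases~B1 and~B2 (unequal lengths) yield first-type monomials with coefficient $\pm 1$ together with at most one second-type contribution with coefficient $\pm 2$, and whenever one of the bracketed arguments already carries a class-(iii) factor of $2$ every output coefficient is automatically even. Closure under the formal square reduces, via $(\sum c_ia_i)^{2}=\sum c_i^{2}a_i^{2}+\sum_{i<j}c_ic_j[a_i,a_j]$, to computing $(rv_n)^{2}=rv_n(r)v_n$ on odd basis elements, which equals $0$ whenever $r\ne 1$ (since $v_n$ annihilates every $x_k$ with $k<n$) and equals $x_{n+1}v_{n+2}$ when $r=1$, so the square again lies in the $\Z$-span. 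Linear independence of the listed generators is immediate from Theorem~\ref{Tbase}, since each of them is a nonzero scalar multiple of a distinct $\Q$-basis monomial. The main obstacle is the combinatorial bookkeeping of class~(iii): one must rule out that the iterated $\ad v_i$'s ever inject an odd multiple of some unintended second-type monomial, and this control is supplied precisely by the parity structure of~\eqref{action} and the repeated vanishing coming from $x_j^{2}=0$.
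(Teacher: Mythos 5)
Your proposal is correct and takes essentially the same approach as the paper: the paper's own proof is a one-line reference to "the computations above" (i.e.\ the proof of Theorem~\ref{Tbase}) plus the observation that the Lie super-ring axioms include the formal square, which is exactly what you elaborate — pivots with coefficient $\pm1$ via Lemma~\ref{Lbasicas}(iv), the class-(ii) elements as formal squares $v_{n-2}^{2}=x_{n-1}v_n$, coefficient-$\pm1$ brackets producing the first-type monomials, and the factor $2$ forced by $[v_j,v_j]=2v_j^{2}$ for the remaining second-type monomials. Your explicit $\Z$-closure bookkeeping (Cases A, B1, B2 and the square of a $\Z$-combination) is considerably more detailed than what the paper records, but it is the same argument, not a different route.
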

\begin{proof}
The result follows from computations above.
The axioms of a Lie-super ring include the formal square (Section~\ref{Sdef}) and yield the second claim.
\end{proof}
In case $p=2$ superderivatives are ordinary derivatives and our example yields also a Lie algebra.
\begin{Corollary}\label{Cchar2}
Let $\ch K=p=2$.
\begin{enumerate}
\item
A basis of the Lie algebra $\RR=\Lie(v_0,v_1)$ is given by monomials of the first type.
\item
A basis of the Lie superalgebra $\RR=\Lie(v_0,v_1)$,
as well as a basis of the restricted Lie (super)algebra $\Lie_p(v_0,v_1)$
is given by
\begin{enumerate}
\item standard monomials of the first type;
\item  $\lbrace x_{n-1}v_n\mid n\geq 2\rbrace$ (squares of the pivot elements).
\end{enumerate}
\end{enumerate}
\end{Corollary}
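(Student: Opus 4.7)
The plan is to adapt the proof of Theorem~\ref{Tbase} to $\ch K=2$, tracking which computations survive and which collapse. For part~(i) I would first record the key degenerations: the identity $[v_i,v_i]=2x_{i+1}v_{i+2}$ of Lemma~\ref{Lbasicas}(ii) reduces to $0$, and $[v_i,v_{i+k}]=0$ for every even $k>0$ by Lemma~\ref{Lprodutos}(i); however $[v_i,[v_i,v_{i+1}]]=v_{i+2}$ (sign absorbed) still holds via Lemma~\ref{Lbasicas}(iv), because the identity $(\ad v_i)^2v_{i+1}=[v_i^2,v_{i+1}]$ from~\eqref{squares} is a computation inside $\End(\Lambda)$ and does not require $v_i^2$ to lie in the Lie algebra. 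These vanishings eliminate precisely the routes by which Theorem~\ref{Tbase} produced second-type monomials with nontrivial Grassmann coefficients.

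Next, since $v_{n-2}$ annihilates every tail $r_{n-3}$ (its action on $\Lambda$ starts at $x_{n-2}$ by~\eqref{action}), $v_{n-2}$ super-commutes with left multiplication by $r_{n-3}$ in $\End(\Lambda)$, so
\[
[v_{n-2},[v_{n-2},r_{n-3}v_{n-1}]] = r_{n-3}\,[v_{n-2},[v_{n-2},v_{n-1}]] = r_{n-3}v_n,
\]
which supplies all first-type monomials of length $n$ with $x_{n-2}$ absent, while the single bracket $[v_{n-2},r_{n-3}v_{n-1}]=r_{n-3}x_{n-2}v_n$ supplies those containing $x_{n-2}$. By induction on $n$, all first-type monomials lie in $\Lie(v_0,v_1)$. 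For closure I would rerun cases~A, B1, B2 of the proof of Theorem~\ref{Tbase} restricted to first-type monomials: the $[v_m,v_m]$ leftover terms, the sole source of second-type output there, each carry a factor of $2$ and vanish in $\ch K=2$, while the remaining derivative terms stay first-type by the same book-keeping as in the original proof.

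For part~(ii) the formal square is available. For an odd first-type monomial $r_{n-2}v_n$ (so $|r_{n-2}|$ is even) I would compute the operator square in $\End(\Lambda)$: since $v_n(r_{n-2})=0$ and the supersign is trivial in $\ch K=2$, one has $v_n\,r_{n-2}=r_{n-2}\,v_n$, hence
\[
(r_{n-2}v_n)^2 = r_{n-2}^2\,v_n^2.
\]
For any nontrivial $r_{n-2}$ the Grassmann square $r_{n-2}^2=0$, so the only nonzero odd squares are the pivot squares $v_n^2=x_{n+1}v_{n+2}$, producing exactly the monomials $x_{m-1}v_m$, $m\ge 2$. The same computation on every even basis monomial---including on $x_{n-1}v_n$ itself, since $x_{n-1}^2=0$---gives $m^{[2]}=0$; Jacobson's $p=2$ formula then reduces the restricted $p$-mapping on any even element to bracket terms already in our span, and~\eqref{square2} handles formal squares of sums of odd elements. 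Hence the claimed set is closed under $[\,,\,]$, formal squares, and $p$-mapping, and contains the generators.

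Linear independence of the first-type monomials together with $\{x_{n-1}v_n\mid n\ge 2\}$ follows from their distinct actions on the Grassmann letters via~\eqref{action}. The main obstacle is part~(ii): one must exclude the emergence of any second-type monomial $r_{n-3}x_{n-1}v_n$ with $r_{n-3}\ne 1$ (in particular the ``false monomial'' $x_0x_2v_3$ of Theorem~\ref{Tbase}) from some iterated mix of brackets and formal squares. The decisive inputs are the Grassmann identity $r^2=0$ for $|r|\ge 1$, which kills every non-pivot square, together with the vanishing $[v_i,v_{i+2k}]=0$ in $\ch K=2$, which blocks the bracket routes exploited in the $\ch K\ne 2$ case.
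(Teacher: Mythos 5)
Your proposal is correct and follows essentially the same route as the paper: for part (i) the paper likewise just re-runs the proof of Theorem~\ref{Tbase} in characteristic $2$, where every source of second-type monomials carries a factor $2$ and the pivot elements are still reached by double brackets (Lemma~\ref{Lbasicas}(iv), Lemma~\ref{Lpivot}), and for part (ii) it adjoins exactly the pivot squares, all other $p^n$-th powers dying by Grassmann nilpotence just as you compute. The only difference is that the paper gets closure of the enlarged span for free by citing the $p$-hull theorem of~\cite{StrFar}, whereas your direct verification via Jacobson's formula should also record (in one line) that brackets against the adjoined elements stay in the span, which follows from the identity you already invoke: $[x_{m-1}v_m,z]=[v_{m-2}^2,z]=(\ad v_{m-2})^2(z)$.
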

\begin{proof}
Let $L=L_{\bar{0}}\oplus L_{\bar{1}}\subset \WW(\Lambda)$ be the Lie subalgebra generated by $v_0,v_1$
(i.e. we use the bracket $\left[\ ,\ \right]$ only).
By proof of Theorem~\ref{Tbase}, its basis $\lbrace w_j\mid j\in J\rbrace$
consists of monomials of the first type, thus yielding the first claim.

In order to obtain the restricted Lie algebra
$\Lie_p(v_0,v_1)$, which coincides with the $p$-hull $\Lie_p(L)$,
it is sufficient to add all $p^n$-powers of basis elements:
$\Lie_p(L)=\langle w_j^{[p^n]}\mid n\geq 1,j\in J\rangle_K$~\cite{StrFar}.
These powers are trivial except squares of the pivot elements.
The same observation holds in case of the restricted Lie superalgebra.

Consider the case of the Lie superalgebra $\RR=\Lie(v_0,v_1)$.
Now, we need to add squares of a basis of the odd component $L_{\bar 1}$.
Again, we add the same squares of the pivot elements.
\end{proof}
\begin{Corollary}\label{Cchar_p}
Let $\ch K=p>2$.
A basis of the restricted Lie superalgebra $\Lie_p(v_0,v_1)$
is the same as that for the Lie superalgebra $\RR$ described in Theorem~\ref{Tbase}.
\end{Corollary}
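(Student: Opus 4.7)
The plan is to show $\Lie_p(v_0, v_1) = \RR$ as vector spaces, which yields the basis claim at once. Since $\RR = \Lie(v_0, v_1) \subseteq \Lie_p(v_0, v_1)$ is automatic, I only need to verify that $\RR$ is already closed under the restricted $p$-operation on its even part $\RR_{\bar 0}$.

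Following the strategy used for Corollary~\ref{Cchar2}, by Jacobson's formula for $(x+y)^p$ in the associative algebra $\End \Lambda$ together with $p$-semilinearity, it suffices to prove that $w^{[p]} = w^p = 0$ for every even basis monomial $w$ listed in Theorem~\ref{Tbase}. Indeed, given any $x = \sum_i \lambda_i w_i \in \RR_{\bar 0}$, Jacobson's formula yields $x^p = \sum_i \lambda_i^p w_i^p$ plus a Lie polynomial in the $w_i$, which already lies in $\RR$; iteration then handles $x^{[p^k]}$ for all $k$ and covers the full $p$-hull.

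The key computation is that $w^2 = 0$ for every even basis monomial. Each basis monomial from Theorem~\ref{Tbase} has the form $w = r \cdot v_n$, where $v_n$ is the odd pivot element and $r \in \Lambda$ is a Grassmann monomial all of whose letters $x_i$ have index $i < n$: in first-type monomials $r = r_{n-2}$, and in second-type monomials $r = r_{n-3} x_{n-1}$, whose largest index $n-1$ is still strictly less than $n$. Evenness of $w$, combined with $v_n$ being odd, forces $|r|$ to be odd; in particular $|r| \geq 1$ and $r^2 = 0$ in $\Lambda$. Using that $v_n$ is an odd superderivation of $\Lambda$, one has the operator identity $v_n \cdot r = v_n(r) + (-1)^{|r|} r \cdot v_n$, so
$$
w^2 \;=\; r v_n r v_n \;=\; r \, v_n(r) \, v_n \;-\; r^2 v_n^2 \;=\; 0,
$$
because $r^2 = 0$ and $v_n(r) = 0$, the latter since $v_n$ annihilates $x_k$ for every $k < n$ by~\eqref{action}. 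Hence $w^p = 0$ for all $p \geq 2$, and I anticipate no further obstacle beyond the routine check that the neck letter $x_{n-1}$ of the second-type monomials indeed has index strictly below $n$, which is what ensures $v_n(r) = 0$.
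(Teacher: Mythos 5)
Your proposal is correct and takes essentially the same approach as the paper: the paper's one-line proof asserts exactly what you verify in detail, namely that powers of the even basis elements are trivial because they contain Grassmann letters (all of index below the head $v_n$, so $v_n(r)=0$ and $r^2=0$ force $w^2=0$). The reduction to powers of the basis monomials, which you carry out via Jacobson's formula, is the same standard $p$-hull argument that the paper invokes in the proof of Corollary~\ref{Cchar2}.
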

\begin{proof}
Powers of the even basis elements of $\Lie(v_0,v_1)$ are trivial because they contain Grassmann letters.
\end{proof}
\section{Weight functions, $\mathbb{Z}^2$-gradation, and two coordinate systems}
\label{Sweight}

In this section we introduce two weight functions.
Using them we establish that our algebras are $\Z^2$-graded my multidegree in the generators and
derive further corollaries.

We start with the Lie superalgebra $\WW(\Lambda_I)$
of special superderivations of the Grassmann algebra
$\Lambda_I=\Lambda(x_i\mid i\in I)$
and consider a subalgebra spanned by {\it pure} Lie monomials:
$$\WW_{\mathrm{fin}}(\Lambda_I)= \langle x_{i_1}\cdots x_{i_m}\dd_j
\mid i_k,j\in I\rangle_K\subset \WW(\Lambda_I).$$
Define a {\it weight function} on the Grassmann variables and respective superderivatives related as:
$$wt(\dd_i)=-wt(x_i)=\alpha_i\in\C,\qquad i\geq 0,$$
and extend it to pure Lie monomials
$wt(x_{i_1}\cdots x_{i_m}\dd_j)=-\a_{i_1}-\cdots-\a_{i_m}+\a_j$, $i_k,j\in I$.
One checks that the weight function is {\it additive}, namely,
$wt([w_1,w_2])=wt(w_1)+wt(w_2)$, where $w_1,w_2$ are pure Lie monomials.
The weight function is also extended to an associative hull $\Alg(\WW_{\mathrm{fin}}(\Lambda_I))$
and it is additive on associative products of its monomials.

Now we return to $\RR=\Lie(v_0,v_1)$ and $\AA=\Alg(v_0,v_1)$.
We want all terms in~\eqref{pivot2} to have the same weight. Namely, assume that
$wt(v_i)=wt(\dd_i)=\alpha_{i}=-\alpha_{i}-\alpha_{i+1}+\alpha_{i+2}$, $i\ge 0$.
We get a recurrence relation
\begin{equation}\label{recorrencia}
\alpha_{i+2}=\alpha_{i+1}+2\alpha_{i},\qquad i\geq 0.
\end{equation}
The roots of the respective characteristic polynomial are $\lambda_1=2$ and $\lambda_2=-1$.
\begin{Lemma} \label{Lpesos}
A base of the space of solutions of the recurrence equation~\eqref{recorrencia} is given by
two weight functions $\wt(\ )$ and $\swt(\ )$ defined as follows:
\begin{enumerate}
\item $\wt(\dd_n)=\wt(v_n)=-\wt(x_n)=2^n$, $n\geq 0$ (the {\rm weight} function);
\item $\swt(\dd_n)=\swt(v_n)=-\swt(x_n)=(-1)^n$, $n\geq 0$ (the {\rm superweight} function);
\item we combine these functions together as
$\Wt(v_n)=\left(\wt(v_n),\swt(v_n)\right)=\left(2^n,(-1)^n\right),\ n\geq 0$,
(the {\rm vector weight} function).
\end{enumerate}
\end{Lemma}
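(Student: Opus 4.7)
The plan is to recognize~\eqref{recorrencia} as a standard homogeneous linear recurrence of order two with constant coefficients, whose solution space is a 2-dimensional $\C$-vector space (any solution being determined uniquely by $\alpha_0$ and $\alpha_1$). Thus it suffices to exhibit two linearly independent solutions and to observe that they arise from the roots of the characteristic polynomial $\lambda^2-\lambda-2=(\lambda-2)(\lambda+1)$ already computed in the excerpt.

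First I would verify directly that the two candidate sequences satisfy~\eqref{recorrencia}. For $\alpha_n=2^n$ one has $2^{i+1}+2\cdot 2^i=2\cdot 2^{i+1}=2^{i+2}$; for $\alpha_n=(-1)^n$ one has $(-1)^{i+1}+2(-1)^i=(-1)^i=(-1)^{i+2}$. Linear independence is immediate from the initial values, since
\[
\det\begin{pmatrix} 2^0 & (-1)^0 \\ 2^1 & (-1)^1 \end{pmatrix}=\det\begin{pmatrix} 1 & 1 \\ 2 & -1 \end{pmatrix}=-3\ne 0,
\]
so any solution $\alpha_n$ of~\eqref{recorrencia} is a unique $\C$-linear combination of $2^n$ and $(-1)^n$. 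This yields claims (i) and (ii) on the values at the generators $v_n=\dd_n$, and the prescriptions $\wt(x_n)=-\wt(\dd_n)$, $\swt(x_n)=-\swt(\dd_n)$ come from the general definition of weight on pure Lie monomials recalled at the beginning of the section, ensuring additivity of $\wt$ and $\swt$ on products in $\WW_{\mathrm{fin}}(\Lambda)$ and $\Alg(\WW_{\mathrm{fin}}(\Lambda))$.

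For claim (iii), the vector weight is simply the direct sum of the two scalar weight functions; since each component is additive on commutators and on associative products, so is $\Wt$, and the pair $(2^n,(-1)^n)$ is just the evaluation of the two basis solutions at the pivot $v_n$.

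There is no real obstacle here: the only thing worth double-checking is that the identification $\wt(\dd_n)=\wt(v_n)$ is consistent, but this follows from~\eqref{pivot2}, because by construction $\alpha_i=\wt(v_i)$ was forced to agree with $\wt(\dd_i)$ so that every summand in the recursive presentation of $v_i$ have equal weight; the same reasoning applies verbatim to $\swt$, which justifies using both functions simultaneously on the pivot elements.
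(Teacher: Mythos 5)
Your proposal is correct and follows essentially the same route as the paper, which leaves the lemma's proof implicit after noting that the characteristic polynomial of~\eqref{recorrencia} has roots $\lambda_1=2$ and $\lambda_2=-1$. Your explicit verification of the two solutions, the independence determinant, and the sign conventions $\wt(x_n)=-\wt(\dd_n)$, $\swt(x_n)=-\swt(\dd_n)$ simply spell out the standard argument the authors take for granted.
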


Below, a {\it monomial} is
any (Lie or associative) product of letters
$\lbrace x_i,\dd_i,v_i\mid i\geq 0\rbrace\subset \End\Lambda$.
By arguments above, we have the following.
\begin{Lemma}\label{aditividade}
The weight functions are well defined on monomials.
They are additive on (Lie or associative) products of monomials, e.g.,
$\Wt(a\cdot b)=\Wt(a)+\Wt(b)$, where $a,b$ are monomials of $\AA$.
\end{Lemma}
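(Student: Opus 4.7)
The plan is to reduce the claim for monomials in $\{x_i,\dd_i,v_i\}$ to the already-understood case of pure Lie monomials in $\{x_i,\dd_i\}$, exploiting the fact that the recurrence~\eqref{recorrencia} was set up precisely to make every summand in the expansion~\eqref{pivot} of a pivot element $v_i$ weight-homogeneous.

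First, I would recall from the paragraph preceding Lemma~\ref{Lpesos} that $wt$ and $swt$ extend additively to the pure Lie monomials $x_{i_1}\cdots x_{i_m}\dd_j$ of $\WW_{\mathrm{fin}}(\Lambda)$ and are additive on both the Lie bracket and the associative product of $\Alg(\WW_{\mathrm{fin}}(\Lambda))$. This reduces to checking the defining relations of $\End\Lambda$: the super-anticommutations $x_ix_j+x_jx_i=0$ and $\dd_i\dd_j+\dd_j\dd_i=0$ preserve weight trivially, while $\dd_ix_i+x_i\dd_i=1$ exchanges the weight-$0$ element $x_i\dd_i$ with a weight-$0$ scalar. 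The analogous statement for $swt$ and hence for $\Wt$ follows identically.

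Next, I would handle the letter $v_i$ by substituting its formal expansion~\eqref{pivot}. A typical summand has the form $x_ix_{i+1}\cdots x_{i+2k-1}\dd_{i+2k}$ with weight
\[
-\alpha_i-\alpha_{i+1}-\cdots-\alpha_{i+2k-1}+\alpha_{i+2k},
\]
and a short induction on $k$ using~\eqref{recorrencia} collapses this to $\alpha_i$; the same computation with $(-1)^i$ in place of $2^i$ works for $swt$. Thus $\Wt(v_i)=(2^i,(-1)^i)$ is consistent with the definition on $\dd_i$ in Lemma~\ref{Lpesos}, so $v_i$ is weight-homogeneous as an element of $\WW(\Lambda)$.

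Finally, for any monomial $a$ in $\{x_i,\dd_i,v_i\}$, substituting~\eqref{pivot} for each $v_i$-factor unfolds $a$ into a formal sum of pure Lie monomials, all of the same common weight which I take as $\Wt(a)$. Applied to a product $a\cdot b$, the unfolding and distributivity together with additivity on pure monomials from step one give $\Wt(a\cdot b)=\Wt(a)+\Wt(b)$. The only subtle point I foresee is making sure these formal expansions remain legitimate elements of $\WW(\Lambda)$, i.e.\ that at each Grassmann coefficient $\mathbf{x}^\alpha$ only finitely many derivations occur; this is automatic for products of monomials of $\RR$ or $\AA$ because the expansion of $v_i$ contributes, for each fixed degree in the Grassmann letters, only one surviving $\dd_j$, and this finiteness condition is preserved under the relations of $\End\Lambda$.
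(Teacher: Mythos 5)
Your proposal is correct and takes essentially the same approach as the paper: the paper's own (very terse) proof is exactly the observation that the weight functions are additive on pure monomials and their associative products, and that the recurrence~\eqref{recorrencia} was chosen precisely so that every summand of the expansion~\eqref{pivot} of $v_i$ has the same weight $\alpha_i$, so any monomial in $\{x_i,\dd_i,v_i\}$ unfolds into a weight-homogeneous formal sum of pure monomials. Your write-up simply makes these steps explicit, including the induction collapsing $-\alpha_i-\cdots-\alpha_{i+2k-1}+\alpha_{i+2k}$ to $\alpha_i$ and the consistency of the relations of $\End\Lambda$ with the weight assignment.
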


As a first application we establish $\Z^2$-gradations.
\begin{Lemma}\label{Lz2graduacao}
The algebras $\RR=\Lie(v_0,v_1)$ and $\AA=\Alg(v_0,v_1)$ are $\mathbb{Z}^2$-graded
by multidegree in $\lbrace v_0,v_1\rbrace$:
$$\RR=\mathop{\oplus}\limits_{n_1,n_2\geq 0}\RR_{n_1, n_2},\qquad
  \AA=\mathop{\oplus}\limits_{n_1,n_2\geq 0}\AA_{n_1, n_2}.$$
\end{Lemma}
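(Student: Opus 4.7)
I would construct the gradings externally, by transferring them from the ambient Lie superalgebra $\WW(\Lambda)$ (respectively the associative superalgebra $\End\Lambda$) via the vector weight function $\Wt$ of Lemma~\ref{Lpesos}. Define $\RR_{n_1,n_2}$ to be the $K$-linear span of all iterated Lie (super)brackets of $v_0, v_1$ (including formal squares when $\ch K = 2$) involving exactly $n_1$ occurrences of $v_0$ and $n_2$ of $v_1$; similarly let $\AA_{n_1,n_2}$ be the span of associative monomials of multidegree $(n_1,n_2)$ in $v_0, v_1$. By construction $\RR = \sum_{n_1,n_2\ge 0}\RR_{n_1,n_2}$ and $\AA = \sum_{n_1,n_2\ge 0}\AA_{n_1,n_2}$, and expanding a bracket or product term by term shows that $[\RR_{n_1,n_2},\RR_{n_1',n_2'}] \subseteq \RR_{n_1+n_1',n_2+n_2'}$, and likewise for $\AA$. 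The only substantive point remaining is that these sums are direct.

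For directness I would exploit $\Wt$. The recurrence~\eqref{recorrencia} was imposed precisely so that every pure monomial occurring in the infinite expansion of $v_i$ shares the same vector weight $\Wt(v_i) = (2^i,(-1)^i)$; hence each $v_i$ is a genuine homogeneous element with respect to the $\Z^2$-grading of $\WW(\Lambda)$ (respectively $\End\Lambda$) by vector weight. By additivity (Lemma~\ref{aditividade}), every Lie or associative monomial of multidegree $(n_1,n_2)$ in the generators has vector weight $n_1\Wt(v_0) + n_2\Wt(v_1) = (n_1 + 2n_2,\ n_1 - n_2)$. The linear map $(n_1,n_2) \mapsto (n_1 + 2n_2,\ n_1 - n_2)$ has determinant $-3 \ne 0$ and is therefore injective on $\Z^2$. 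Consequently distinct multidegrees $(n_1,n_2)$ are sent to distinct $\Wt$-weight spaces of the ambient algebra, and since weight-homogeneous elements at distinct weights are linearly independent, the sums defining $\RR$ and $\AA$ are direct.

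\textbf{Expected obstacle.} The only delicate point is justifying that $\Wt$ is defined on the $v_i$, which are infinite formal sums, and on all their brackets and products. This is resolved by the observation above: the choice of $\alpha_i$ in~\eqref{recorrencia} forces each $v_i$ to be $\Wt$-homogeneous, after which Lemma~\ref{aditividade} propagates the weight to every generated monomial. The characteristic $2$ case needs no extra work, since $v_i^{2}$ has vector weight $2\Wt(v_i)$, matching the multidegree of the square. With these points secured, what remains is the elementary linear-algebraic observation that the $2\times 2$ weight matrix of the two generators is nonsingular.
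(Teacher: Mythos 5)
Your proposal is correct and follows essentially the same route as the paper: both define the components as spans of monomials of fixed multidegree in $\{v_0,v_1\}$, invoke the additivity of the vector weight function (Lemma~\ref{aditividade}) to assign each component the weight $(n_1+2n_2,\,n_1-n_2)$, and deduce directness from the linear independence of $\Wt(v_0)$ and $\Wt(v_1)$ (your nonzero-determinant observation is the same fact). Your explicit remark that the recurrence~\eqref{recorrencia} is what makes the infinite sums $v_i$ genuinely $\Wt$-homogeneous is a point the paper treats only in the setup preceding Lemma~\ref{Lpesos}, but it is not a different argument.
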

\begin{proof}
Consider the weight vectors for the generators of our algebras
$\Wt(v_0)=(1,1)$ and $\Wt(v_1)=(2,-1)$ given by Lemma~\ref{Lpesos}.
For $n_1,n_2\geq 0$, let $\RR_{n_1 n_2}\subset\RR$ be the subspace spanned
by all Lie elements of multidegree $(n_1,n_2)$ in $\{v_0,v_1\}$.
By Lemma~\ref{aditividade}, all elements $v\in\RR_{n_1 n_2}$ have the same vector weight:
\begin{equation}\label{vetor peso}
\Wt(v)=n_1\Wt(v_0)+n_2\Wt(v_1)=(n_1+2n_2,n_1-n_2).
\end{equation}
Elements of $\RR_{n_1 n_2}\subset\WW(\Lambda_I)$
are written as (probably infinite) linear combinations of pure Lie monomials.
Since the vectors $\Wt(v_0)$, $\Wt(v_1)$ are linearly independent,
two different components $\RR_{n_1, n_2}$, $\RR_{n_1', n_2'}$  have different weights,
hence they are expressed via different sets of pure Lie monomials.
We conclude that the sum of the components  is direct.
The $\Z^2$-gradation follows by definition of these components.
\end{proof}

Let $v\in\AA_{n_1 n_2}$, then $\swt(v)=n_1\swt(v_0)+n_2\swt(v_1)=n_1-n_2$.
Now, $\swt(v)=0$ if and only if $n_1=n_2$.
We call $\mathop{\oplus}\limits_{n\geq 0}\AA_{nn}$ a {\it diagonal}
of the $\mathbb{Z}^2$-gradation of $\AA$.
Similarly, $\mathop{\oplus}\limits_{n\geq 0}\RR_{nn}$ is
a {\it diagonal} of the $\mathbb{Z}^2$-gradation of $\RR$.
The additivity of the superweight function easily implies the following fact.
\begin{Corollary}\label{Ctriang}
The following statements are valid:
\begin{enumerate}
\item  The algebras $\RR$ and $\AA$ allow triangular decompositions
into direct sums of three subalgebras:
$$\RR=\RR_+ \oplus \RR_0 \oplus \RR_-,\qquad \AA=\AA_+ \oplus \AA_0 \oplus \AA_-,$$
where the respective components are spanned by monomials of positive, zero,
and negative superweights.
\item  The zero components above coincide with the respective diagonals:
$$\RR_0=\mathop{\oplus}\limits_{n\geq 0}\RR_{nn},\qquad
\AA_0=\mathop{\oplus}\limits_{n\geq 0}\AA_{nn}.$$
\end{enumerate}
\end{Corollary}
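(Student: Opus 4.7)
My plan is to obtain both decompositions by coarsening the $\Z^2$-grading of Lemma~\ref{Lz2graduacao} into a $\Z$-grading indexed by superweight, and then splitting that $\Z$-grading by the sign of the index. For each $k \in \Z$, I would set
$$\RR^{(k)} = \bigoplus_{\substack{n_1,\,n_2 \geq 0 \\ n_1 - n_2 = k}} \RR_{n_1 n_2},$$
so that $\RR = \bigoplus_{k \in \Z}\RR^{(k)}$. Formula~\eqref{vetor peso} shows that every $v \in \RR^{(k)}$ satisfies $\swt(v) = k$, and by the additivity of $\swt$ established in Lemma~\ref{aditividade} we have $[\RR^{(k)}, \RR^{(l)}] \subseteq \RR^{(k+l)}$, so this is a $\Z$-graded Lie superalgebra structure on $\RR$. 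The same reasoning applies verbatim to $\AA$ with associative products in place of supercommutators, using the $\Z^2$-grading of $\AA$ from the same lemma.

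With this in hand, I would define $\RR_+ = \bigoplus_{k > 0}\RR^{(k)}$, $\RR_0 = \RR^{(0)}$, $\RR_- = \bigoplus_{k < 0}\RR^{(k)}$, and analogously $\AA_+$, $\AA_0$, $\AA_-$. Part (i) is then immediate: the vector-space direct sum decomposition is inherited from the $\Z$-grading, while each of the three summands is a subalgebra because the subsets of positive, zero, and negative integers in $\Z$ are each closed under addition, which is exactly what the additivity of $\swt$ translates to at the level of products of monomials.

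For part (ii), I would simply observe that $\swt(v) = n_1 - n_2$ for $v \in \RR_{n_1 n_2}$ vanishes precisely when $n_1 = n_2$, so
$$\RR_0 = \RR^{(0)} = \bigoplus_{n \geq 0} \RR_{nn},$$
and the argument for $\AA$ is identical. I do not expect any real obstacle: once the $\Z^2$-grading from Lemma~\ref{Lz2graduacao} and the additivity of $\swt$ from Lemma~\ref{aditividade} are in place, the corollary reduces to a routine reindexing of the grading.
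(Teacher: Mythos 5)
Your proposal is correct and follows essentially the same route as the paper, which simply remarks that the additivity of the superweight function ``easily implies'' the corollary: splitting the $\Z^2$-grading of Lemma~\ref{Lz2graduacao} by the sign of $\swt(v)=n_1-n_2$ and noting that the positive, zero, and negative integers are closed under addition is exactly the intended argument. Your explicit intermediate step of packaging this as a superweight $\Z$-grading is also the paper's own viewpoint, since it introduces that very grading a little later in Section~\ref{Sweight}.
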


Given a nonzero homogeneous element
$v\in \AA_{n_1 n_2}$, $n_1,n_2\geq 0$, we define its {\it multidegree} vector and  {\it degree}:
$$\Gr(v)=(n_1,n_2)\in\mathbb{Z}^2\subset\mathbb{R}^2,\qquad \deg(v)=n_1+n_2.$$
We put it on plane using {\it standard coordinates} $(X_1,X_2)\in\mathbb{R}^2$,
which we also call {\it multidegree coordinates}.
Thus, we write $\Gr(v)=(n_1,n_2)=(X_1,X_2)$.

Let $(X_1,X_2)\in\mathbb{R}^2$ be an arbitrary point of plane in terms of the standard coordinates.
We introduce its {\it weight coordinates} $(Z_1,Z_2)$:
\begin{equation}\label{relacao}
\begin{cases}
Z_1=X_1+2X_2;\\
Z_2=X_1-X_2;
\end{cases}
\qquad\qquad
\begin{cases}
X_1=(Z_1+2Z_2)/3;\\
X_2=(Z_1-Z_2)/3.
\end{cases}
\end{equation}

\begin{Lemma}
Let $v\in\AA$ be a monomial, $\Gr(v)=(n_1,n_2)=(X_1,X_2)$ its multidegree, and $(Z_1,Z_2)$
the respective weight coordinates. Then
$$(Z_1,Z_2)=\Wt(v)=(\wt(v),\swt(v)).$$
\end{Lemma}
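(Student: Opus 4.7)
The plan is to observe that this lemma is essentially a bookkeeping identity comparing two linear changes of coordinates, so it reduces to substituting known values and applying additivity. All the work has already been done in the preceding lemmas; what remains is to align the formulas.

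First, I would recall that by Lemma~\ref{Lpesos} the generators have vector weights $\Wt(v_0)=(2^0,(-1)^0)=(1,1)$ and $\Wt(v_1)=(2^1,(-1)^1)=(2,-1)$. Next, by Lemma~\ref{aditividade}, the weight function $\wt$ and the superweight function $\swt$ are additive on associative products of monomials, hence so is $\Wt=(\wt,\swt)$. Therefore, if $v\in\AA$ is a monomial of multidegree $(n_1,n_2)$ in the generators $\{v_0,v_1\}$ — i.e.\ $v$ is an associative product in which $v_0$ appears $n_1$ times and $v_1$ appears $n_2$ times — additivity gives
\begin{equation*}
\Wt(v)=n_1\Wt(v_0)+n_2\Wt(v_1)=(n_1+2n_2,\,n_1-n_2),
\end{equation*}
which is exactly equation~\eqref{vetor peso} already established inside the proof of Lemma~\ref{Lz2graduacao}.

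Finally, I would compare this with the definition of the weight coordinates~\eqref{relacao}. Since $\Gr(v)=(n_1,n_2)=(X_1,X_2)$, the formulas
\begin{equation*}
Z_1=X_1+2X_2=n_1+2n_2,\qquad Z_2=X_1-X_2=n_1-n_2
\end{equation*}
yield $(Z_1,Z_2)=(\wt(v),\swt(v))=\Wt(v)$, which is the desired identity.

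There is no real obstacle here: the statement is a consistency check confirming that the change of basis in~\eqref{relacao} is precisely the one that converts the multidegree vector $(n_1,n_2)$ into the weight vector via the matrix with columns $\Wt(v_0)$ and $\Wt(v_1)$. The only thing worth flagging is that additivity was proved for \emph{pure} Lie/associative monomials in $\{x_i,\dd_j\}$, and a monomial in $\{v_0,v_1\}$ expands into a linear combination of such pure monomials; but each pure monomial appearing in the expansion of $v_0^{n_1}v_1^{n_2}$-type products shares the same weight by the additivity already used in Lemma~\ref{Lz2graduacao}, so the identity is unambiguous.
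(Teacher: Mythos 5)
Your proposal is correct and matches the paper's own proof: the paper likewise combines equation~\eqref{vetor peso} (the vector weight $\Wt(v)=n_1\Wt(v_0)+n_2\Wt(v_1)=(n_1+2n_2,\,n_1-n_2)$, obtained from additivity) with the coordinate change~\eqref{relacao} to read off $\wt(v)=X_1+2X_2=Z_1$ and $\swt(v)=X_1-X_2=Z_2$. Your only addition is re-deriving~\eqref{vetor peso} from Lemmas~\ref{Lpesos} and~\ref{aditividade} rather than citing it, which is harmless.
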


\begin{proof}
By~\eqref{vetor peso} and~\eqref{relacao}, we have
\begin{align*}
\wt(v)&=n_1+2n_2=X_1+2X_2=Z_1,\\
\swt(v)&=n_1-n_2=X_1-X_2=Z_2.\qedhere
\end{align*}
\end{proof}

\begin{Lemma}\label{multigraupivo}
Consider the pivot elements $\{v_n\mid n\ge 0\}$. Then
\begin{enumerate}
\item $\Gr(v_n)=\frac{1}{3}\left(2^n+2(-1)^n,2^n-(-1)^n\right)$,\quad $n\geq 0;$
\item The pivot elements belong to two parallel lines: $X_1-X_2=1$ and $X_1-X_2=-1$.
\end{enumerate}
\end{Lemma}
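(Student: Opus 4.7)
The plan is to leverage what has already been proved: $v_n\in\RR$ for all $n\ge 0$ (Lemma~\ref{Lpivot}), the algebra $\RR$ is $\Z^2$-graded by multidegree in $\{v_0,v_1\}$ (Lemma~\ref{Lz2graduacao}), and the weight coordinates of any homogeneous element coincide with $(\wt,\swt)$ (the lemma right after Lemma~\ref{Lz2graduacao}). So everything will reduce to a one-line computation after we verify that $v_n$ is homogeneous with respect to the $\Z^2$-grading.

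First I would observe that $v_n$ is $\Z^2$-homogeneous for every $n\ge 0$. This is clear for $n=0,1$ since $v_0,v_1$ are the generators with $\Gr(v_0)=(1,0)$ and $\Gr(v_1)=(0,1)$. For $n\ge 2$, the identity $v_{n+2}=-[v_n,[v_n,v_{n+1}]]$ from Lemma~\ref{Lbasicas}(iv) expresses $v_{n+2}$ as an iterated bracket of homogeneous elements, hence it is homogeneous too (and of multidegree $2\Gr(v_n)+\Gr(v_{n+1})$ in $\{v_0,v_1\}$). In the case $\ch K=2$ the same identity still reads $v_{n+2}=-[v_n^{2},v_{n+1}]$ with $v_n^2$ the formal square, which is likewise $\Z^2$-homogeneous.

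Next, since $v_n$ is homogeneous, its weight coordinates equal $\Wt(v_n)=(\wt(v_n),\swt(v_n))=(2^n,(-1)^n)$ by Lemma~\ref{Lpesos}. Plugging $Z_1=2^n$ and $Z_2=(-1)^n$ into the conversion formulas~\eqref{relacao} yields
\[
X_1=\frac{Z_1+2Z_2}{3}=\frac{2^n+2(-1)^n}{3},\qquad X_2=\frac{Z_1-Z_2}{3}=\frac{2^n-(-1)^n}{3},
\]
which is exactly claim (i).

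Finally, for (ii) I would compute
\[
X_1-X_2=\frac{(2^n+2(-1)^n)-(2^n-(-1)^n)}{3}=\frac{3(-1)^n}{3}=(-1)^n,
\]
so $X_1-X_2=1$ when $n$ is even and $X_1-X_2=-1$ when $n$ is odd, placing the pivot elements on the two parallel lines as claimed. There is essentially no obstacle here beyond checking homogeneity of $v_n$ and correctly applying the linear change of coordinates~\eqref{relacao}.
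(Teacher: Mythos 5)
Your proof is correct and follows essentially the same route as the paper: the paper's proof also just reads off $\Wt(v_n)=(2^n,(-1)^n)$ from Lemma~\ref{Lpesos}, converts via~\eqref{relacao} to get claim (i), and notes $X_1-X_2=Z_2=\swt(v_n)=(-1)^n$ for claim (ii). Your explicit inductive check that each $v_n$ is $\Z^2$-homogeneous (via $v_{n+2}=-[v_n,[v_n,v_{n+1}]]$, respectively $-[v_n^2,v_{n+1}]$ when $\ch K=2$) is a sound addition that the paper leaves implicit, but it does not change the substance of the argument.
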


\begin{proof}
By Lemma~\ref{Lpesos}, $\Wt(v_n)=\left(2^n,(-1)^n\right)=(Z_1,Z_2)$,
we use~\eqref{relacao} and obtain the first claim.
Observe that  $X_1-X_2=Z_2=\swt(v_n)=(-1)^n$.
\end{proof}

\begin{Lemma}\label{monomiofalso}
Consider the false monomial $u=x_0x_2v_3$.
\begin{enumerate}
\item $\Gr(u)=(-1,2)$;
\item  the false monomial cannot appear as a homogeneous component of an element of $\RR$.
\end{enumerate}
\end{Lemma}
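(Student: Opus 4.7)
The plan is to use the weight/superweight functions and the dictionary between vector weights and multidegree coordinates established in this section, treating $u=x_0x_2v_3$ as a monomial in $\AA$ (viewed inside $\End\Lambda$) and computing its weight vector directly.

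First, I would compute $\Wt(u)$ using additivity of $\wt$ and $\swt$ on associative products (Lemma~\ref{aditividade}) together with the values $\wt(x_n)=-2^n$, $\swt(x_n)=(-1)^{n+1}$, $\wt(v_n)=2^n$, $\swt(v_n)=(-1)^n$ from Lemma~\ref{Lpesos}:
\[
\wt(u)=-1-4+8=3,\qquad \swt(u)=-1-1-1=-3,
\]
so $(Z_1,Z_2)=(3,-3)$. Applying the change-of-coordinate formulas~\eqref{relacao} gives $X_1=(3-6)/3=-1$ and $X_2=(3+3)/3=2$, which proves claim (i): $\Gr(u)=(-1,2)$.

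For claim (ii), I would argue by the $\Z^2$-grading of Lemma~\ref{Lz2graduacao}: we have $\RR=\bigoplus_{n_1,n_2\ge 0}\RR_{n_1,n_2}$, where $\RR_{n_1,n_2}$ consists of (combinations of) Lie monomials of multidegree $(n_1,n_2)$ in $\{v_0,v_1\}$, and by~\eqref{vetor peso} any nonzero element of $\RR_{n_1,n_2}$ has vector weight $(n_1+2n_2,n_1-n_2)$ with $n_1,n_2\ge 0$. If the false monomial $u$ appeared as a homogeneous component of some $w\in\RR$, then by the uniqueness of expression in the $\Z^2$-grading, $u$ would have to lie in $\RR_{-1,2}$, which is not among the graded pieces since multidegrees in $\{v_0,v_1\}$ are non-negative. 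This contradiction yields claim (ii).

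The argument is routine; the only subtle point is to make sure that the weights of the individual factors $x_0$, $x_2$, $v_3$ are those stipulated by Lemma~\ref{Lpesos} and that additivity applies to associative products of these letters in $\End\Lambda$, which is exactly the content of Lemma~\ref{aditividade}. No serious obstacle is expected.
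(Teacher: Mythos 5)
Your proof is correct and follows essentially the same route as the paper: both compute $\Wt(u)=(3,-3)$ via additivity of the weight functions (the paper writes this as $\Wt(v_3)-\Wt(v_2)-\Wt(v_0)$, you use the explicit values of $\wt(x_i)$, $\swt(x_i)$ from Lemma~\ref{Lpesos}), convert to multidegree coordinates via~\eqref{relacao} to get $\Gr(u)=(-1,2)$, and conclude that $u$ cannot occur since every homogeneous component of an element of $\RR$ must have nonnegative multidegree in the generators $\{v_0,v_1\}$.
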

\begin{proof} We use Lemma~\ref{Lpesos} and~\eqref{relacao}
\begin{align*}
\Wt(u)&=(Z_1,Z_2)=\Wt(v_3)-\Wt(v_2)-\Wt(v_0)=(8,-1)-(4,1)-(1,1)=(3,-3);\\
\Gr(u)&=(Z_1+2Z_2,Z_1-Z_2)/3=(-1,2).
\end{align*}
Any homogeneous element of $\RR=\Lie(v_0,v_1)$
must have nonnegative multidegree coordinates in the generators.
Therefore, $v=x_0x_2v_3$ cannot appear.
\end{proof}

\begin{Lemma}\label{Lend}
Let $0\neq v\in\AA_{n_1n_2}$, $n_1,n_2\geq0$, and $\tau(v)$ the image under the endomorphism $\tau$.
Then
$$ \Gr(\tau(v))=(2n_2,n_1+n_2). $$
\end{Lemma}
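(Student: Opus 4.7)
The strategy is to combine the additivity of the weight functions with the fact that $\tau$ shifts indices by one and is multiplicative.

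First I would observe that the shift $\tau$ on $\Lambda$ extends to an algebra endomorphism of $\End\Lambda$ (hence of $\AA$) satisfying $\tau(v_i)=v_{i+1}$ for all $i\ge 0$, as is evident from~\eqref{pivot}. Next, using Lemma~\ref{Lpesos}, I would note how $\tau$ acts on the weights of the letters: $\wt(\tau(v_n))=\wt(v_{n+1})=2^{n+1}=2\wt(v_n)$ and $\swt(\tau(v_n))=(-1)^{n+1}=-\swt(v_n)$, with the same scalings for $\tau(x_n)$ and $\tau(\dd_n)$. Since the two weight functions are additive on associative monomials (Lemma~\ref{aditividade}), we obtain $\wt(\tau(w))=2\wt(w)$ and $\swt(\tau(w))=-\swt(w)$ for any such monomial, and by linearity for any element that is homogeneous with respect to $\Wt$.

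For $0\ne v\in\AA_{n_1 n_2}$, formula~\eqref{vetor peso} gives $\Wt(v)=(n_1+2n_2,\, n_1-n_2)$, so $\Wt(\tau(v))=(2n_1+4n_2,\, n_2-n_1)$. To promote this to a statement about multidegree it remains to show that $\tau(v)$ lies in a single $\Z^2$-component of $\AA$. For this, I would expand $v$ as a linear combination of associative monomials containing exactly $n_1$ factors $v_0$ and $n_2$ factors $v_1$; applying $\tau$ replaces each $v_0$ by $v_1$ and each $v_1$ by $v_2$. By Lemma~\ref{Lbasicas}(iv), $v_2=-[v_0,[v_0,v_1]]\in\AA_{2,1}$, and trivially $v_1\in\AA_{0,1}$; since multiplication respects the $\Z^2$-grading, every such product lies in $\AA_{2n_2,\, n_1+n_2}$, and hence so does $\tau(v)$, giving $\Gr(\tau(v))=(2n_2,\, n_1+n_2)$.

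As a consistency check matching the independent weight calculation, applying the change of coordinates~\eqref{relacao} to $(Z_1,Z_2)=(2n_1+4n_2,\, n_2-n_1)$ yields $X_1=(Z_1+2Z_2)/3=2n_2$ and $X_2=(Z_1-Z_2)/3=n_1+n_2$, confirming the claim. The main (and essentially only) point requiring care is justifying that $\tau(v)$ is $\Z^2$-homogeneous, which reduces to knowing the bidegree of $v_2$; this is where Lemma~\ref{Lbasicas} does the work.
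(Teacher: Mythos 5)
Your proposal is correct and its decisive step is exactly the paper's proof: expand $v$ into monomials with $n_1$ factors $v_0$ and $n_2$ factors $v_1$, apply the endomorphism $\tau$ using $\tau(v_0)=v_1$, $\tau(v_1)=v_2$ with $\Gr(v_2)=(2,1)$ obtained from Lemma~\ref{Lbasicas}(iv), and invoke additivity of the multidegree. The preliminary computation of $\Wt(\tau(v))$ and the coordinate-change check via~\eqref{relacao} are harmless but redundant, since the grading argument alone already yields $\Gr(\tau(v))=(2n_2,n_1+n_2)$.
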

\begin{proof}
The relation $[v_0^2,v_1]=-v_2$ implies that $\Gr(v_2)=(2,1)$.
By assumption,
$v$ is a linear combination of products involving $n_1$ factors $v_0$ and $n_2$ factors $v_1$.
Since $\tau$ is an endomorphism,
$\tau(v)$ is a linear combination of products involving $n_1$ factors $\tau(v_0)$ and $n_2$ factors $\tau(v_1)$.
Using $\tau(v_0)=v_1$, $\tau(v_1)=v_2$, and additivity of the multidegree function, we get
\begin{equation*}
\Gr(\tau(v))=n_1\Gr(v_1)+n_2\Gr(v_2)=n_1(0,1)+n_2(2,1)=(2n_2,n_1+n_2).
\qedhere
\end{equation*}
\end{proof}
The multidegree $\Z^2$-gradation induces a {\it weight $\N$-gradation}:
$$
\RR=\mathop{\oplus}\limits_{n=1}^\infty\tilde \RR_n,\quad
\tilde \RR_n=\langle w\in \RR_{n_1,n_2}\mid \wt w=n_1+2n_2=n\rangle_K,\quad n\ge 1.
$$
Similarly, one defines a {\it superweight $\Z$-gradation}:
$$
\RR=\mathop{\oplus}\limits_{n=-\infty}^{+\infty}\RR_n',\quad
\RR_n'=\langle w\in \RR_{n_1,n_2}\mid \swt w=n_1-n_2=n\rangle_K,\quad n\in \Z.
$$
Finally, we have a {\it degree $\N$-gradation}:
$$
\RR=\mathop{\oplus}\limits_{n=1}^\infty\RR_n,\quad
\RR_n=\langle w\in \RR_{n_1,n_2}\mid \deg w=n_1+n_2=n\rangle_K,\quad n\ge 1.
$$
The last gradation is important to us because it is related to the lower central series.
\begin{Lemma} \label{Llowercentral}
The terms of the lower central series of $\RR$ are as follows:
\begin{enumerate}
\item $\RR^m=\mathop{\oplus}\limits_{n\ge m} \RR_n$, $m\ge 1$;
\item the lower central series factors are isomorphic to the terms of the degree $\N$-gradation:
      $\RR^{n}/\RR^{n+1}\cong  \RR_n$, for all $n\ge 1$;
\item let $\ch K\ne 2$, then $\dim\RR/\RR^m\approx 3m$, as $m\to\infty$;
\item let $\ch K= 2$, then $\dim\RR/\RR^m\approx 3m/2$, as $m\to\infty$.
\end{enumerate}
\end{Lemma}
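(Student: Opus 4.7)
My plan is to handle the four parts in sequence, using earlier results of the paper. First I would establish (i) by induction on $m$, exploiting the compatibility of the degree $\N$-grading with the operations that define the lower central series; then deduce (ii) as a direct consequence; and finally convert (iii)--(iv) into statements about the ordinary growth function.

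For (i), I would induct on $m$ and verify the two inclusions separately. The forward direction $\RR^m \subseteq \bigoplus_{n \ge m} \RR_n$ follows from additivity of degree under the Lie bracket — so $[\RR_p, \RR_q] \subseteq \RR_{p+q}$ forces $[\RR, \RR^{m-1}]$ to land in $\bigoplus_{n \ge m} \RR_n$ — together with, in characteristic~$2$, the observation that the formal square doubles degree, placing squares of odd elements from $\RR^{\lceil m/2 \rceil}$ into $\bigoplus_{n \ge m}\RR_n$. The reverse inclusion reduces to showing $\RR_n \subseteq \RR^n$ for every $n$. For this I would invoke the explicit basis description of Theorem~\ref{Tbase} (respectively Corollary~\ref{Cchar2} in char~$2$): the constructions in those proofs already present each standard monomial of degree $n$ as an iterated bracket (or, in char~$2$, a formal square) involving exactly $n$ generators, which by definition of the augmented lower central series places it in $\RR^n$.

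Part (ii) is then immediate from (i): $\RR^n/\RR^{n+1} = \bigl(\bigoplus_{k \ge n}\RR_k\bigr) \big/ \bigl(\bigoplus_{k \ge n+1}\RR_k\bigr) \cong \RR_n$.

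For (iii) and (iv), I would identify $\dim(\RR/\RR^m)$ with the ordinary growth function $\gamma_\RR(m-1)$. Because $v_0$ and $v_1$ each have degree $1$, the subspace $\RR^{(X,m-1)}$ spanned by all monomials (and, in char~$2$, formal squares of odd monomials) of length at most $m-1$ coincides with $\bigoplus_{n=1}^{m-1} \RR_n$, which by (i)--(ii) equals $\RR/\RR^m$ as a vector space. The claimed asymptotics then follow directly from Theorem~\ref{TRgrowth}, which supplies $\gamma_\RR(m) \approx 3m$ in char $\ne 2$ and the parallel estimate $\gamma_\RR(m) \approx 3m/2$ in char~$2$. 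The main obstacle I anticipate lies not in the grading bookkeeping of (i) but in the precise constants in (iii)--(iv): these ultimately rest on the explicit linear asymptotic for $\gamma_\RR$, which is proved only later in the paper via a careful enumeration of the standard monomials of bounded weight.
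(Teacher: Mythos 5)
Your proposal is correct and follows essentially the same route as the paper: the paper proves (i) in one line by citing the standard fact that $\RR^m$ is spanned by commutators of length at least $m$ in the generators together with the observation that each component $\RR_{n_1,n_2}$ is spanned by commutators with exactly $n_1$ factors $v_0$ and $n_2$ factors $v_1$, so your two-inclusion induction (degree additivity one way, the bracket constructions behind Theorem~\ref{Tbase} the other way) merely unpacks that citation. Parts (ii)--(iv) are handled identically in both arguments: (ii) is immediate from (i), and (iii)--(iv) follow from the identification $\dim \RR/\RR^m=\gamma_{\RR}(m-1)$ combined with the asymptotics of Theorem~\ref{TRgrowth}, a forward reference that is non-circular since that theorem does not rely on this lemma.
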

\begin{proof}
Recall that the multidegree component $\RR_{n_1 n_2}$ is spanned by commutators containing $n_1$ factors $v_0$ and $n_2$ factors $v_1$.
On the other hand, the term of the lower central series $\RR^m$, $m\ge 1$, is spanned
by all commutators of length at least $m$ in the generators $\{v_0,v_1\}$~\cite{Ba}, thus
$\RR^m=\mathop{\oplus}\limits_{n_1+n_2\ge m}\RR_{n_1n_2}=\mathop{\oplus}\limits_{n\ge m}\RR_n$.

By Theorem~\ref{TRgrowth}, using the ordinary growth function
we have $\dim \RR/\RR^m=\dim(\RR_1\oplus\cdots \oplus \RR_{m-1})= \gamma_{\RR}(m-1)\approx 3m$, $m\to \infty$ in the case $\ch K\ne 2$.
\end{proof}
\begin{Corollary}
$\RR$ is a {\em naturally graded algebra} with respect to
the degree $\N$-gradation in the sense~\cite{Mil},
namely, the associated graded algebra related to the filtration by the lower central series
$(\RR^m\mid m\ge 1)$
is isomorphic to the original $\N$-graded algebra $\RR=\mathop{\oplus}\limits_{n=1}^\infty\RR_n$.
\end{Corollary}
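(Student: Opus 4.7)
The plan is to exhibit an explicit isomorphism of $\N$-graded Lie superalgebras
$$\phi : \RR = \bigoplus_{n=1}^\infty \RR_n \;\longrightarrow\; \gr \RR = \bigoplus_{n=1}^\infty \RR^n/\RR^{n+1}$$
and verify that it preserves all the relevant structure. Given the strong form of Lemma~\ref{Llowercentral}(i), namely $\RR^m = \bigoplus_{n\ge m}\RR_n$, essentially no work is required beyond identifying the pieces; the statement is almost tautological once one has the identification of the lower central series with the truncations of the degree grading.

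First, I would define $\phi$ on each homogeneous component $\RR_n$ as the composition of the inclusion $\RR_n \hookrightarrow \RR^n$ (valid by Lemma~\ref{Llowercentral}(i)) with the quotient map $\RR^n \twoheadrightarrow \RR^n/\RR^{n+1}$. Injectivity on each component follows from $\RR_n \cap \RR^{n+1} = \RR_n \cap \bigoplus_{k\ge n+1}\RR_k = 0$, using that the degree gradation is a direct sum decomposition. Surjectivity on each component follows since every element of $\RR^n$ decomposes as $\sum_{k\ge n} w_k$ with $w_k \in \RR_k$, so its image modulo $\RR^{n+1}$ equals $\phi(w_n)$. Combining these per-component isomorphisms produces the graded vector space isomorphism $\phi$.

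Next, I would check that $\phi$ respects the Lie superalgebra structure. Since the degree gradation is a gradation of $\RR$ as a Lie superalgebra, we have $[\RR_n,\RR_k] \subseteq \RR_{n+k}$. Under $\phi$, this corresponds to the bracket in $\gr \RR$, which is defined by $[x+\RR^{n+1},y+\RR^{k+1}] = [x,y]+\RR^{n+k+1}$ for $x\in\RR^n$, $y\in\RR^k$. In characteristic $2$ the same argument handles the quadratic mapping $(\ )^{[2]}:\RR_{\bar 1}\to \RR_{\bar 0}$: if $x\in\RR_n\cap\RR_{\bar 1}$ then by the definition of the lower central series in the characteristic $2$ setting (given in Section~\ref{Sdef}), $x^{[2]}\in\RR^{2n}$, and it lies in $\RR_{2n}$ by additivity of the multidegree on squares in the enveloping algebra, so $\phi$ carries the formal square on $\RR$ to the induced square on $\gr \RR$.

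The potentially subtle point, and the only one worth dwelling on, is the verification that Lemma~\ref{Llowercentral}(i) truly gives $\RR^m = \bigoplus_{n\ge m}\RR_n$ rather than only an inclusion in one direction; but this is exactly what that lemma establishes by combining the containment $\RR^m \subseteq \bigoplus_{n\ge m}\RR_n$ (immediate from additivity of degree under brackets, formal squares, and $p$-th powers) with the reverse containment $\RR_n\subseteq \RR^n$ for $n\ge m$ (each element of $\RR_n$ is, by definition, a linear combination of Lie monomials of length exactly $n$ in $\{v_0,v_1\}$, possibly augmented by formal squares of elements of $(\RR^{n/2})_{\bar 1}$, and is thus in $\RR^n$). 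Once this is in hand, the corollary follows immediately.
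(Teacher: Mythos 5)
Your proof is correct and follows essentially the same route as the paper, which states this corollary without proof precisely because it is immediate from Lemma~\ref{Llowercentral}(i): the identification $\RR^m=\mathop{\oplus}\limits_{n\ge m}\RR_n$ makes the canonical map $\RR_n\hookrightarrow\RR^n\twoheadrightarrow\RR^n/\RR^{n+1}$ an isomorphism of graded algebras, exactly as you spell out. Your additional care with injectivity, surjectivity, bracket compatibility, and the formal square in characteristic~$2$ simply makes explicit what the paper leaves tacit.
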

\section{Bounds on weights of monomials of  Lie superalgebra $\RR$}\label{Sweightbounds}

In this section we establish estimates on weights and superweights of
the standard monomials of the Lie superalgebra $\RR$,
which allow us to prove that the standard monomials are situated in a region of plane restricted by two
logarithmic curves (Theorem~\ref{TcurvesL}).
\begin{Lemma}\label{Lestimativas}
Weights of standard monomials of the first and second type satisfy the following inequalities:
\begin{align*}
2^{n-1} &<\wt(r_{n-2}v_n)\leq 2^n,\qquad\qquad n\geq 0;\\
2^{n-2}&<\wt(r_{n-3}x_{n-1}v_n)\leq 2^{n-1},\quad n\geq 2.
\end{align*}
\end{Lemma}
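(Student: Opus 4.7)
The plan is to compute the weights directly using the additivity of $\wt(\ )$ established in Lemma~\ref{aditividade}, together with the explicit values $\wt(v_n) = 2^n$ and $\wt(x_i) = -2^i$ from Lemma~\ref{Lpesos}, and then bound the resulting sums with the elementary identity $\sum_{i=0}^{k} 2^i = 2^{k+1}-1$.

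\textbf{Step 1 (first type).} For a standard monomial $r_{n-2}v_n = x_0^{\xi_0}\cdots x_{n-2}^{\xi_{n-2}} v_n$ with $\xi_i \in \{0,1\}$, I would write
\[
\wt(r_{n-2} v_n) = \wt(v_n) + \sum_{i=0}^{n-2} \xi_i \,\wt(x_i) = 2^n - \sum_{i=0}^{n-2} \xi_i \, 2^i.
\]
The upper bound $\wt(r_{n-2}v_n) \le 2^n$ is immediate since all $\xi_i \ge 0$ and is attained when $r_{n-2}=1$. For the lower bound, the worst case is $\xi_0=\cdots=\xi_{n-2}=1$, giving
\[
\wt(r_{n-2} v_n) \ge 2^n - (2^{n-1}-1) = 2^{n-1}+1 > 2^{n-1}.
\]
The small cases $n=0,1$ (where the tail is empty) are covered by the identity $\wt(v_n)=2^n$ directly.

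\textbf{Step 2 (second type).} For $r_{n-3} x_{n-1} v_n$ with $n\ge 2$, additivity and the presence of the neck letter $x_{n-1}$ give
\[
\wt(r_{n-3} x_{n-1} v_n) = 2^n - 2^{n-1} - \sum_{i=0}^{n-3} \xi_i \, 2^i = 2^{n-1} - \sum_{i=0}^{n-3} \xi_i \, 2^i.
\]
The upper bound $\le 2^{n-1}$ is attained when $r_{n-3}=1$. The lower bound follows from $\sum_{i=0}^{n-3} 2^i = 2^{n-2}-1$, yielding
\[
\wt(r_{n-3} x_{n-1} v_n) \ge 2^{n-1} - (2^{n-2}-1) = 2^{n-2}+1 > 2^{n-2}.
\]

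There is essentially no obstacle here: the statement reduces to recognizing that each $r$-tail contributes a sum of distinct negative powers of $2$, which by the geometric series formula is strictly smaller in absolute value than the leading term $2^n$ or $2^{n-1}$ coming from $v_n$ (together with the neck $x_{n-1}$ in the second case). The only minor point to check is that the degenerate cases of empty tails (small $n$) still satisfy the strict inequalities, which they do since the bounds become $2^{n-1} < 2^n$ and $2^{n-2} < 2^{n-1}$ in those cases.
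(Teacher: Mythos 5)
Your proposal is correct and follows essentially the same route as the paper: both arguments reduce the claim to bounding the tail contribution by the geometric sum $2^0+\cdots+2^m=2^{m+1}-1$ (the paper states this as the bound $0\geq\wt(r_m)\geq -2^{m+1}+1$) and then add the weights $\wt(v_n)=2^n$ and, for the second type, $\wt(x_{n-1})=-2^{n-1}$, checking the degenerate small-$n$ cases separately. No differences worth noting.
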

\begin{proof}
First, observe that weight of a tail $r_m=x_0^{\xi_0}\cdots x_m^{\xi_m},\ \xi_i\in\lbrace 0,1\rbrace,$ has the following bounds:
\begin{equation}\label{desigualdades}
0\geq\wt(r_m)\geq -(2^0+2^1+\cdots +2^m)=-2^{m+1}+1,\quad m\geq 0.
\end{equation}
Theses bounds are also formally valid for $m=-1$. Using these bounds we obtain:
$$ 2^n\geq\wt(r_{n-2}v_n)\geq -2^{n-1}+1+2^n=2^{n-1}+1>2^{n-1},\quad n\geq1. $$
The outside bounds above are also valid for $n=0$.
Let us check bounds for monomials of the second type:
\begin{equation*}
2^{n-1}=2^n-2^{n-1}\geq\wt(r_{n-3}x_{n-1}v_n)\geq -2^{n-2}+1-2^{n-1}+2^n=2^{n-2}+1>2^{n-2},\quad n\ge 2.
\qedhere
\end{equation*}
\end{proof}

\begin{Lemma}\label{superpeso}
Superweights of standard monomials have the following bounds:
\begin{align*}
-\frac{n}{2}-\frac{1}{2} &\leq\swt(r_{n-2}v_n)\leq \frac{n}{2},\qquad\qquad\ n\geq1;\\
-\frac{n}{2}-\frac{3}{2} &\leq\swt(r_{n-3}x_{n-1}v_n)\leq \frac{n}{2}+1,\quad n\geq2.
\end{align*}
\end{Lemma}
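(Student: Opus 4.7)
The plan is to exploit the additivity of $\swt$ and the fact that each factor of a standard monomial contributes a sign $\pm 1$ determined by the parity of its index. Since monomials of the first and second type are products of a subset of Grassmann letters $x_i$ together with a head $v_n$ (and an extra $x_{n-1}$ in the second case), and $\swt(v_n)=(-1)^n$, $\swt(x_i)=(-1)^{i+1}$, the extreme superweights of the tail $r_{n-2}$ (resp.\ $r_{n-3}$) are obtained by including either all odd-indexed or all even-indexed letters available, and these extremes depend only on the parity of $n$.

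First I would record the elementary bounds
\[
-\#\{0\le i\le m\mid i \text{ even}\}\;\le\;\swt(r_m)\;\le\;\#\{0\le i\le m\mid i \text{ odd}\},
\]
which are attained by the appropriate choice of $\xi_i\in\{0,1\}$. Then I would split into the two parities of $n$ to count these cardinalities.

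\smallskip
For the first type $r_{n-2}v_n$: if $n$ is even, the indices $\{0,1,\dots,n-2\}$ contain $n/2$ even and $n/2-1$ odd entries, and $\swt(v_n)=+1$, giving the range $[-n/2+1,\;n/2]$; if $n$ is odd, both parities yield $(n-1)/2$ indices and $\swt(v_n)=-1$, giving $[-(n+1)/2,\;(n-3)/2]$. In both cases the range lies in $[-n/2-1/2,\;n/2]$, with equality on the lower side exactly when $n$ is odd and on the upper side exactly when $n$ is even. For the second type $r_{n-3}x_{n-1}v_n$ the two extra factors contribute $\swt(x_{n-1})+\swt(v_n)=2(-1)^n$, and a parallel parity count over $\{0,1,\dots,n-3\}$ yields the ranges $[-n/2+3,\;n/2+1]$ for $n$ even and $[-n/2-3/2,\;(n-7)/2]$ for $n$ odd, which sit inside $[-n/2-3/2,\;n/2+1]$. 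The small-$n$ cases ($n=1$ for the first type; $n=2,3$ for the second type, where $r_{n-3}=1$ and the monomial $x_0x_2v_3$ is excluded) are checked by direct inspection and respect the claimed inequalities.

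\smallskip
The only mild obstacle is the parity bookkeeping: one must keep track of whether $n-2$ (resp.\ $n-3$) is even or odd when counting even/odd indices, and verify that the excluded false monomial $x_0x_2v_3$ does not tighten any bound. There is no conceptual difficulty; the whole argument is a two-case computation based on Lemma~\ref{Lpesos} and the additivity of $\swt$ stated in Lemma~\ref{aditividade}.
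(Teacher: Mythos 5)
Your proposal is correct and follows essentially the same route as the paper: additivity of $\swt$, splitting by the parity of $n$, and counting the even-indexed (superweight $-1$) versus odd-indexed (superweight $+1$) letters available in the tail, which yields exactly the same per-parity ranges ($[-n/2+1,\,n/2]$ and $[-(n+1)/2,\,(n-3)/2]$ for the first type; $[-n/2+3,\,n/2+1]$ and $[-n/2-3/2,\,(n-7)/2]$ for the second) that the paper obtains. The only cosmetic difference is your explicit remark that the excluded false monomial cannot tighten the bounds, which is immediate since the statement asserts inequalities rather than attainment.
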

\begin{proof}
Let $w=r_{n-2}v_n$, $n\geq 1$.
Consider the case $n=2k$, $k\ge 1$.
We have $r_{n-2}=x_0^{\xi_0}x_1^{\xi_1}\cdots x_{2k-3}^{\xi_{2k-3}}x_{2k-2}^{\xi_{2k-2}}$, $0\leq\xi_i\leq1$,
where
$\swt(x_0)=\swt(x_2)=\cdots = \swt(x_{2k-2})=-1$,
$\swt(x_1)=\swt(x_3)=\cdots = \swt(x_{2k-3})=1$, and
$\swt(v_n)=1$.
We get:
\begin{align*}
-k &\leq \swt(r_{n-2})\leq k-1;\\
-\frac{n}{2}+1=-k+1  &\leq \swt(r_{n-2}v_n)\leq k=\frac{n}{2}.
\end{align*}

Consider the case $n=2k+1$, $k\ge 0$.
Similarly, we get
\begin{align*}
-k &\leq\swt(r_{n-2})\leq k;\\
-\frac{n}{2}-\frac{1}{2}=-k-1 &\leq \swt(r_{n-2}v_n)\leq k-1=\frac{n}{2}-\frac{3}{2}.
\end{align*}
These cases yield the claimed bounds for monomials of the first type.

Now consider a standard monomial of the second type $w=r_{n-3}x_{n-1}v_n$, $n\geq 2$.
In case $n=2k$, $k\ge 1$,
we have $r_{n-3}=x_0^{\xi_0}x_1^{\xi_1}\cdots x_{2k-4}^{\xi_{2k-4}}x_{2k-3}^{\xi_{2k-3}}$, $0\leq\xi_i\leq1$,
and $\swt(x_{2k-1}v_{2k})=2$. We get
\begin{align*}
-k+1 &\leq {\rm swt}(r_{n-3})\leq k-1;\\
-\frac{n}{2}+3=-k+3 &\leq\swt(r_{n-3}x_{n-1}v_n)\leq k+1=\frac{n}{2}+1.
\end{align*}
Consider the case $n=2k+1$, $k\ge 1$. Similarly, we get
\begin{align*}
-k &\leq {\rm swt}(r_{n-3})\leq k-1;\\
-\frac{n}{2}-\frac{3}{2}=-k-2 &\leq\swt(r_{n-3}x_{n-1}v_n)\leq k-3=\frac{n}{2}-\frac{7}{2}.
\end{align*}
These two estimates yield the desired bounds for monomials of the second type.
\end{proof}

We combine two previous lemmas along with a formal check in case $n=0$ for $\swt(v_0)=1$.
\begin{Lemma}\label{gerais}
Let $w$ be a standard monomial of length $n\geq 0$. Then
\begin{align*}
2^{n-2} &<\wt(w)\leq 2^n,\\
-\frac{n}{2}-\frac{3}{2} &\leq\swt(w)\leq\frac{n}{2}+1.
\end{align*}
\end{Lemma}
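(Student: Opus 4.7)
The plan is to observe that Lemma \ref{gerais} is essentially a consolidation of the two previous lemmas, so the proof reduces to taking the worst of the two bounds across the two kinds of standard monomials and handling the small-length boundary cases.

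First I would split according to the type of standard monomial given by Theorem \ref{Tbase}: either $w = r_{n-2} v_n$ (first type, existing for every $n \geq 0$) or $w = r_{n-3} x_{n-1} v_n$ (second type, existing only for $n \geq 2$ and excluding the false monomial). For the weight estimate, Lemma \ref{Lestimativas} already gives $2^{n-1} < \wt(r_{n-2}v_n) \leq 2^n$ and $2^{n-2} < \wt(r_{n-3}x_{n-1}v_n) \leq 2^{n-1}$, and both intervals sit inside $(2^{n-2}, 2^n]$, so the weight half of the claim follows immediately. For the superweight estimate, Lemma \ref{superpeso} gives $-n/2 - 1/2 \leq \swt \leq n/2$ on the first type (for $n \geq 1$) and $-n/2 - 3/2 \leq \swt \leq n/2 + 1$ on the second type (for $n \geq 2$); since the second-type interval already matches the target and the first-type interval lies inside it, the superweight half follows as well.

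The only thing not literally covered by those two lemmas is the boundary $n = 0$, where the unique standard monomial is $v_0$. Here a direct computation using Lemma \ref{Lpesos} gives $\wt(v_0) = 1$ and $\swt(v_0) = 1$, which satisfy $2^{-2} < 1 \leq 2^{0}$ and $-3/2 \leq 1 \leq 1$, so both bounds hold. (One could also record the analogous trivial check at $n = 1$ for the second-type bounds, but this is not needed since no second-type monomial exists there.)

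There is no real obstacle here; the only thing to be careful about is making sure that the stated intervals in the two source lemmas are indeed contained in the intervals of the combined statement across both parities and both monomial types, and that the ranges of $n$ for which each source bound is asserted together cover every $n \geq 0$ via the $n = 0$ check.
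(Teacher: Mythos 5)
Your proposal is correct and takes essentially the same route as the paper: the paper's proof consists precisely of combining Lemma~\ref{Lestimativas} (weights) and Lemma~\ref{superpeso} (superweights), noting that both monomial types fall within the stated intervals, together with the same formal check at $n=0$ that $\swt(v_0)=1$. Your extra verification of the weight bound at $n=0$ is redundant (Lemma~\ref{Lestimativas} already covers $n\geq 0$ for the first type) but harmless.
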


\begin{Theorem}
\label{TcurvesL}
The points of plane associated with the standard monomials of $\RR=\Lie(v_0,v_1)$
are bounded by two logarithmic curves in terms of the weight coordinates $\Wt(w)=(Z_1,Z_2)$:
$$-\frac{1}{2}\log_2Z_1-\frac{5}{2}<Z_2<\frac{1}{2}\log_2 Z_1+2.$$
\end{Theorem}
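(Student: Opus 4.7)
The plan is to simply combine Lemma~\ref{gerais} with the identification $(Z_1,Z_2)=(\wt(w),\swt(w))$ established earlier, and eliminate the length parameter $n$ between the weight and superweight bounds.

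First I would recall that for any standard monomial $w$ of length $n\geq 0$, Lemma~\ref{gerais} gives the two-sided bounds $2^{n-2}<\wt(w)\leq 2^n$ and $-\frac{n}{2}-\frac{3}{2}\leq\swt(w)\leq\frac{n}{2}+1$. Using $Z_1=\wt(w)$, the left weight inequality $Z_1>2^{n-2}$ rearranges to $n<\log_2 Z_1+2$. This is the only piece of information we need about $n$; note that only the upper bound on $n$ is required, since both superweight bounds involve $\pm n/2$ and we want an upper bound on $|Z_2|$.

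Next I would substitute into the superweight bounds. For the upper bound on $Z_2$, from $Z_2\leq \frac{n}{2}+1$ and $n<\log_2 Z_1+2$ we get
\[
Z_2<\frac{1}{2}(\log_2 Z_1+2)+1=\frac{1}{2}\log_2 Z_1+2.
\]
For the lower bound, $Z_2\geq -\frac{n}{2}-\frac{3}{2}$ together with $n<\log_2 Z_1+2$ gives
\[
Z_2>-\frac{1}{2}(\log_2 Z_1+2)-\frac{3}{2}=-\frac{1}{2}\log_2 Z_1-\frac{5}{2}.
\]
Combining the two displayed inequalities yields exactly the statement.

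There is essentially no obstacle: the real work has already been absorbed into Lemma~\ref{gerais} (whose proof splits into cases by parity of $n$ and by type of standard monomial) and into the identification of weight coordinates with $(\wt,\swt)$. The only care needed is to use the \emph{strict} inequality $\wt(w)>2^{n-2}$ to obtain a strict upper bound on $n$, which in turn produces strict inequalities on both sides of the final estimate, matching the form of the theorem.
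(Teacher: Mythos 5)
Your proposal is correct and coincides with the paper's own proof: both derive $n<\log_2 Z_1+2$ from the strict lower weight bound in Lemma~\ref{gerais} and then substitute this into the superweight bounds $-\frac{n}{2}-\frac{3}{2}\leq Z_2\leq\frac{n}{2}+1$ to obtain the two strict logarithmic bounds. Nothing is missing; the observation about strictness propagating from $\wt(w)>2^{n-2}$ is exactly the point the paper's one-line chain of inequalities relies on.
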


\begin{proof}
Let $w$ be a standard monomial of length $n\geq 0$.
By Lemma~\ref{gerais}, we have $2^{n-2}<\wt(w)=Z_1$, thus $n<\log_2Z_1+2$.
By the same lemma, we get bounds on $Z_2=\swt(w)$:
\begin{equation*}
-\frac{1}{2}\log_2Z_1-\frac{5}{2}<-\frac{n}{2}-\frac{3}{2}\leq Z_2\leq\frac{n}{2}+1<\frac{1}{2}\log_2 Z_1+2.
\qedhere
\end{equation*}
\end{proof}

\section{Structure of homogeneous components and growth of Lie superalgebra $\RR$}
\label{SmonomialsR}

In this section we prove that the $\Z^2$-grading of $\RR$ is fine (Theorem~\ref{Tfine}).
We show that the growth of $\RR=\Lie(v_0,v_1)$ is linear, moreover,
we explicitly compute the weight growth function $\tilde{\gamma}_{\RR}(m)$
and determine an asymptotic for the ordinary growth function $\gamma_{\RR}(m)$ (Theorem~\ref{TRgrowth}).

First, we specify standard monomials of fixed weight.
\begin{Lemma}\label{Ldoismonomios}
Suppose $\ch K\neq2$ and fix an integer $m\ge 1$.
\begin{enumerate}
\item There is exactly one standard monomial of the first type $w_1$ with $\wt(w_1)=m$ (valid for $\ch K=2$).
\item In the remaining claims assume that $m\notin\{1,3\}$.
      There is exactly one standard monomial of the second type $w_2$ with $\wt(w_2)=m$.
\item There are exactly two standard monomials of weight $m$;
      (in case $m\in\{1,3\}$, there is exactly one standard monomial of weight $m$).
\item Let $w_1=r_{n-2}v_n$ be as above, then $w_2=r_{n-2}x_nv_{n+1}$.
\item $\swt(w_1)-\swt(w_2)=3\cdot(-1)^n$.
\item $\Gr (w_1)-\Gr (w_2)=(-1)^n(2,-1)$.
\end{enumerate}
\end{Lemma}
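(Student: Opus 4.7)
The plan is to read the weight of any standard monomial directly from the binary pattern of its tail. For a first-type monomial $w_1 = r_{n-2} v_n$ with $r_{n-2} = x_0^{\xi_0}\cdots x_{n-2}^{\xi_{n-2}}$, additivity of $\wt$ gives
$$
\wt(w_1) = 2^n - \sum_{i=0}^{n-2} \xi_i\, 2^i.
$$
As $(\xi_0,\ldots,\xi_{n-2})$ runs over $\{0,1\}^{n-1}$, the subtracted sum ranges bijectively over $\{0,1,\ldots,2^{n-1}-1\}$, so the length-$n$ first-type weights form the interval $\{2^{n-1}+1,\ldots,2^n\}$, each hit by a unique monomial. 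Including $n=0$ with $\wt v_0 = 1$, these intervals partition the positive integers, proving (i). This argument is characteristic-free, confirming (i) holds also for $\ch K=2$.

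For the second type $w_2 = r_{n-3} x_{n-1} v_n$ the analogous computation yields
$$
\wt(w_2) = 2^{n-1} - \sum_{i=0}^{n-3} \xi_i\, 2^i,
$$
and at length $n \ge 2$ the attainable weights fill $\{2^{n-2}+1,\ldots,2^{n-1}\}$ bijectively. Taken together these intervals cover every integer $m \ge 2$ exactly once; the sole exception is that at $n=3$ the weight-$3$ monomial would be $x_0 x_2 v_3$, precisely the excluded false monomial. Hence a second-type standard monomial of weight $m$ exists and is unique iff $m \ge 2$ and $m \ne 3$, giving (ii), and combining with (i) immediately gives (iii).

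For (iv), given $w_1 = r_{n-2} v_n$ of weight $m \notin \{1,3\}$, set $w_2 := r_{n-2}\, x_n\, v_{n+1}$. Its tail has the form $r'_{(n+1)-3}\, x_{(n+1)-1}$ with $r'_{n-2} = r_{n-2}$, so $w_2$ is a second-type monomial of length $n+1$; the exclusion $m \ne 3$ is exactly what guarantees $w_2 \ne x_0 x_2 v_3$. Additivity gives $\wt(w_2) = \wt(r_{n-2}) - 2^n + 2^{n+1} = \wt(w_1) = m$, and the uniqueness of (ii) forces $w_2$ to be \emph{the} second-type witness of weight $m$. The boundary cases $w_1 = v_0$ and $w_1 = x_0 v_2$ fall out precisely: the formula would produce $x_0 v_1$ and $x_0 x_2 v_3$, both excluded (the first has negative multidegree in the generators by~\eqref{relacao}, the second is the false monomial by Lemma~\ref{monomiofalso}), which matches the exclusions $m \in \{1,3\}$ of (ii)–(iii).

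Claims (v) and (vi) reduce to additivity of $\swt$. Using $\swt(v_k) = (-1)^k$ and $\swt(x_k) = -(-1)^k$,
$$
\swt(w_1) - \swt(w_2) = (-1)^n - \bigl(-(-1)^n + (-1)^{n+1}\bigr) = 3(-1)^n,
$$
proving (v). Since $\wt(w_1) = \wt(w_2)$, the difference of weight coordinates is $(Z_1,Z_2) = (0, 3(-1)^n)$, and substituting into $X_1 = (Z_1+2Z_2)/3$, $X_2 = (Z_1-Z_2)/3$ from~\eqref{relacao} gives $\Gr(w_1)-\Gr(w_2) = (2(-1)^n, -(-1)^n) = (-1)^n(2,-1)$, which is (vi). The only delicate point in the entire argument is the bookkeeping of the boundary cases $n \le 3$, namely tracking the exclusion of the false monomial $x_0 x_2 v_3$ and the non-existence of second-type monomials at length $n < 2$; everything else is additivity of $\Wt$ plus counting binary strings.
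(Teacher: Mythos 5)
Your proposal is correct and follows essentially the same route as the paper's proof: bijectivity of the tail-weight map (binary expansions) filling the weight intervals $\{2^{n-1}{+}1,\ldots,2^n\}$ and $\{2^{n-2}{+}1,\ldots,2^{n-1}\}$, exclusion of the false monomial $x_0x_2v_3$ of weight $3$, and additivity of $\wt$, $\swt$ for claims (iv)--(vi). The only cosmetic difference is in (vi), where you deduce the multidegree shift from (v) via linearity of the coordinate change~\eqref{relacao}, while the paper substitutes the explicit formula for $\Gr(v_n)$ from Lemma~\ref{multigraupivo}; both are one-line computations.
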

\begin{proof}
We refine consideration of weights in Lemma~\ref{Lestimativas}.
Fix $n\ge 1$. Consider all tails
$r_{n-2}=x_0^{\xi_0}\cdots x_{n-2}^{\xi_{n-2}}$, $\xi_i\in\{0,1\}$, and respective weights
$\wt(r_{n-2})=-\sum_{i=0}^{n-2}\xi_i 2^i$.
By properties of the $p$-adic expansion of integers, there is a one-to-one correspondence
between the set of these tails and
the list of their weights, which fills an interval of integers $\{-2^{n-1}{+}1,\ldots,-1,0 \}$ without gaps.
Adding $\wt(v_n)=2^n$, we obtain a one-to-one correspondence between
the set of standard monomials of the first type
$r_{n-2}v_n$ and the list of their weights $\{2^{n-1}{+}1,\ldots,2^n\}$, where $n\ge 1$ is fixed.
We take unions of these sets
and obtain a one-to-one correspondence between the set
of standard monomials of the first type of length $n\ge1$ and the list of their weights $m\ge 2$.
It remains to consider the unique monomial of zero length $v_0$, $\wt(v_0)=1$.
Claim~(i) is proved.

Fix $n\ge 2$. By the arguments above, there is
a one-to-one correspondence between the set of tails $r_{n-3}$  and
the list of their weights $\{-2^{n-2}{+}1,\ldots,-1,0 \}$ without gaps.
Adding $\wt(x_{n-1}v_n)=-2^{n-1}+2^n=2^{n-1}$,
we obtain a one-to-one correspondence between
the set of standard monomials of the second type
$r_{n-3}x_{n-1}v_n$ and the list of their weights $\{2^{n-2}{+}1,\ldots,2^{n-1}\}$, where $n\ge 2$ is fixed.
By taking unions of these sets, we obtain a one-to-one correspondence between
the set of standard monomials of the second type of length $n\ge2$ and their weights $m\ge 2$.
It remains to exclude the false monomial of weight $\wt(x_0x_2v_3)=3$.

Claim~(iii) follows from (i) and (ii).

To prove (iv), assume that $w_1=r_{n-2}v_n$
is a standard monomial of the first type with $\wt(w_1)=m\neq 1,3$.
Clearly, $w_2=r_{n-2}x_nv_{n+1}$ is a standard monomial of the second type. It has the same weight $m$:
$$
\wt(w_2)-\wt(w_1)=\wt(r_{n-2}x_nv_{n+1})-\wt(r_{n-2}v_n)=\wt(x_n)+\wt(v_{n+1})-\wt(v_n)=-2^n+2^{n+1}-2^n=0.
$$

Using Lemma~\ref{Lpesos}, we prove claim~(v):
$$ \swt(w_1)-\swt(w_2)  = \swt(r_{n-2}v_n)-\swt(r_{n-2}x_nv_{n+1})
= \swt(v_n)-\swt(x_n)-\swt(v_{n+1})=3\cdot(-1)^n.$$
Finally, applying Lemma~\ref{multigraupivo}, we check claim (vi):
\begin{align*}
&\Gr(w_1)-\Gr(w_2)  =  \Gr(r_{n-2}v_n)-\Gr(r_{n-2}x_nv_{n+1})
 =  2\Gr(v_n)-\Gr(v_{n+1})\\
& =  \frac{2}{3}\left(2^n+2(-1)^n,2^n-(-1)^n\right)
  -\frac{1}{3}\left(2^{n+1}+2(-1)^{n+1},2^{n+1}-(-1)^{n+1}\right) =  (-1)^n(2,-1).
\qedhere
\end{align*}
\end{proof}

\begin{Corollary}\label{Clines}
Fix $m\in \N$, $m\notin\{1,3\}$.
Let $(X_1,X_2)$ and $(Z_1,Z_2)$ be the standard  and weight coordinates on plane.
The line of fixed weight $Z_1=X_1+2X_2=m$
contains exactly two standard monomials whose position differ by vector $(2,-1)$.
In case $m\in\{1,3\}$, the line contains one standard monomial.
(see Fig.~\ref{Fig1}).
\end{Corollary}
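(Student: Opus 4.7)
The plan is to deduce this corollary essentially verbatim from Lemma~\ref{Ldoismonomios}, which has already done all the combinatorial work; the corollary is merely its geometric reformulation.

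First, I would translate the setup. By definition the weight coordinate of a monomial $w$ satisfies $Z_1(w)=\wt(w)$, so the line $Z_1=m$ in the $(X_1,X_2)$-plane is exactly the locus of standard monomials with $\wt(w)=m$. Thus the question reduces to counting standard monomials of a prescribed weight and comparing their positions in multidegree.

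Next, for $m\notin\{1,3\}$, part (iii) of Lemma~\ref{Ldoismonomios} asserts that there are exactly two such standard monomials $w_1,w_2$, and part (vi) gives $\Gr(w_1)-\Gr(w_2)=(-1)^n(2,-1)$ where $n$ is the length of $w_1$. In particular, the two lattice points $\Gr(w_1),\Gr(w_2)\in\Z^2$ differ by the vector $\pm(2,-1)$, which is what the statement means by "differ by vector $(2,-1)$." For $m\in\{1,3\}$, part (iii) again gives a single standard monomial on the line (concretely $v_0$ for $m=1$, and $v_0v_1$'s unique partner for $m=3$, the companion monomial being the excluded false monomial $x_0x_2v_3$).

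There is essentially no obstacle: the content is already packaged in Lemma~\ref{Ldoismonomios}, and the corollary just records how its counting and shift statements look when drawn in the $(X_1,X_2)$-plane, consistently with Figure~\ref{Fig1}. The only small thing to double-check is the direction of the shift vector, which alternates with the parity of $n$; since the statement is symmetric in $w_1,w_2$, writing "$(2,-1)$" (without sign) is unambiguous.
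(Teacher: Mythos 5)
Your proposal is correct and follows exactly the paper's (implicit) route: the paper states Corollary~\ref{Clines} without a separate proof, precisely because it is the geometric restatement of parts (iii) and (vi) of Lemma~\ref{Ldoismonomios} together with the identification $Z_1=\wt(w)$, which is what you wrote. One tiny slip in your parenthetical: the unique standard monomial of weight $3$ is $x_0v_2=-[v_0,v_1]$ (its would-be second-type companion being the excluded false monomial $x_0x_2v_3$), so calling it ``$v_0v_1$'s unique partner'' is loose wording, but this does not affect the argument.
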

\begin{Corollary}\label{Cnaturgrad}
Consider the weight $\N$-gradation
$\RR=\mathop{\oplus}\limits_{n=1}^\infty\tilde \RR_n$
(Section~\ref{Sweight}).
Then $\dim\tilde \RR_n=2$ for $n\ge 1$, except $\dim \tilde \RR_1=\dim \tilde \RR_3=1$.
\end{Corollary}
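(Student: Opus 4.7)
The plan is to reduce this assertion directly to the monomial count in Lemma~\ref{Ldoismonomios}. By Theorem~\ref{Tbase}, the standard monomials of the first type $\{r_{n-2} v_n \mid n \ge 0\}$ together with the standard monomials of the second type $\{r_{n-3} x_{n-1} v_n \mid n \ge 2\} \setminus \{x_0 x_2 v_3\}$ form a $K$-basis of $\RR$ (in characteristic different from $2$). Each such monomial $w$ is homogeneous in the multidegree $\Z^2$-grading, say $w \in \RR_{n_1, n_2}$, and by the additivity of the weight function (Lemma~\ref{aditividade}) satisfies $\wt(w) = n_1 + 2 n_2$.

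Next I would observe that since the weight $\N$-grading $\RR = \mathop{\oplus}\limits_{n=1}^{\infty} \tilde\RR_n$ is a coarsening of the multidegree $\Z^2$-grading (two different $\Z^2$-components contribute to the same $\tilde\RR_n$ only if their weights coincide), each component $\tilde\RR_n$ inherits as a basis precisely the subset of standard monomials $w$ with $\wt(w) = n$. Counting these reduces to Lemma~\ref{Ldoismonomios}(iii): for every integer $m \ge 1$ there are exactly two standard monomials of weight $m$, except when $m \in \{1, 3\}$, in which case there is only one. Therefore $\dim \tilde\RR_n = 2$ for all $n \ge 1$ with $n \notin \{1, 3\}$, while $\dim \tilde\RR_1 = \dim \tilde\RR_3 = 1$.

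The argument presents no real obstacle; the substance is already contained in Lemma~\ref{Ldoismonomios}, whose proof exploits the $2$-adic expansion of integers to put tails of a fixed length in bijection with intervals of integer weights they fill without gaps. The exceptional case $n = 1$ is realized only by the pivot $v_0$ of length zero, and the exceptional case $n = 3$ is realized only by the single monomial $x_0 v_2$ of first type because its would-be partner $x_0 x_2 v_3$ of second type is precisely the excluded false monomial (cf.\ Lemma~\ref{monomiofalso}). Thus only the straightforward assembly of these established facts is required.
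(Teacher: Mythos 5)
Your proposal is correct and is essentially the paper's own argument: the paper states this corollary without separate proof as an immediate consequence of Lemma~\ref{Ldoismonomios}(iii) (via Corollary~\ref{Clines}), which is exactly the reduction you carry out, including the correct identification of the exceptional weights $1$ (only $v_0$) and $3$ (only $x_0v_2$, its second-type partner being the excluded false monomial). Note only that, like Lemma~\ref{Ldoismonomios} and Theorem~\ref{Tbase}, the count requires $\ch K\ne 2$, an assumption left implicit in the corollary's wording.
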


Next, we easily derive that the multidegree $\mathbb{Z}^2$-gradation of $\RR=\Lie(v_0,v_1)$ is  fine.
\begin{Theorem}\label{Tfine}
Components of the $\mathbb{Z}^2$-gradation
$\RR=\mathop{\oplus}\limits_{n_1,n_2\geq 0}\RR_{n_1, n_2}$
by multidegree in the generators $\lbrace v_0,v_1\rbrace$
(Lemma~\ref{Lz2graduacao}) are at most one-dimensional.
\end{Theorem}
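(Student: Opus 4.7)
The plan is to derive this essentially for free from the structural results already established in Section~\ref{SmonomialsR}. Since the standard monomials form a basis of $\RR$ (Theorem~\ref{Tbase} when $\ch K\neq 2$, and the analogous variant Corollary~\ref{Cchar2} when $\ch K=2$, where the second-type basis elements are just the squares $x_{n-1}v_n$, a special case of second-type monomials with empty tail $r_{n-3}=1$), the component $\RR_{n_1,n_2}$ is spanned by those standard monomials whose multidegree in $\{v_0,v_1\}$ equals $(n_1,n_2)$. It therefore suffices to show that for each fixed pair $(n_1,n_2)\in\N^2$ there is at most one standard monomial with that multidegree.

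The key reduction is that the multidegree $(n_1,n_2)$ determines, via additivity (Lemma~\ref{aditividade}) and the relation~\eqref{vetor peso}, the weight vector $(Z_1,Z_2)=(n_1+2n_2,\,n_1-n_2)$. In particular, any two standard monomials sharing a multidegree must share the same weight $Z_1$, i.e.\ they lie on a common line $X_1+2X_2=Z_1$. At this point Corollary~\ref{Clines} (which rests on Lemma~\ref{Ldoismonomios}(vi)) takes over: on any such weight line there are at most two standard monomials, and when there are two their multidegree coordinates differ by $\pm(2,-1)\neq(0,0)$. Consequently no two distinct standard monomials can share the same multidegree.

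Combining these two observations, each component $\RR_{n_1,n_2}$ is spanned by at most one basis element, so $\dim\RR_{n_1,n_2}\le 1$. I would include a brief remark covering the excluded weights $m\in\{1,3\}$ of Corollary~\ref{Clines} — in those cases there is only one standard monomial altogether, so the bound is trivial — and a one-line check that the $\ch K=2$ case goes through identically, since the basis in that characteristic is a subset of the standard monomials considered in Lemma~\ref{Ldoismonomios}. There is no serious obstacle here; the only thing to guard against is accidentally quoting Lemma~\ref{Ldoismonomios} (stated for $\ch K\neq 2$) in characteristic two, which is avoided by invoking instead the direct computation of $\Gr(v_n)$ from Lemma~\ref{multigraupivo} to separate the multidegrees of a first-type monomial and its weight-matching square.
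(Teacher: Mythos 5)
Your proposal is correct and follows essentially the same route as the paper: the paper's proof also reduces to the observation that a lattice point $(n_1,n_2)$ lies on the weight line $Z_1=n_1+2n_2$, which by Corollary~\ref{Clines} carries at most two standard monomials whose positions differ by the nonzero vector $(2,-1)$, so no two basis monomials share a multidegree. Your additional care with the cases $m\in\{1,3\}$ and with characteristic two (where the basis is a subset of the standard monomials, so the geometric separation still applies) only makes explicit what the paper leaves implicit.
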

\begin{proof}
Consider a lattice point $(n_1, n_2)\in\Z^2\subset \R^2$.
It belongs to a line of fixed weight $Z_1=X_1+2X_2=n_1+2n_2$,
that contains at most two standard monomials that differ by $(2,-1)$ (Corollary~\ref{Clines}).
\end{proof}

\begin{Theorem}\label{TRgrowth}
The growth of the Lie superalgebra $\RR$ is linear with the following properties.
\begin{enumerate}
\item Let $\ch K\neq2$. We compute the weight growth function explicitly:
$$\tilde{\gamma}_{\RR}(m)=2m-2,\qquad m\geq3.$$
\item Let $\ch K=2$. We compute the weight growth function explicitly:
$$\tilde{\gamma}_{\RR}(m)=m+[\log_2m],\qquad m\geq1.$$
\item $\GKdim\RR=\LGKdim\RR=1$.
\item Let $\ch K\neq 2$. The ordinary growth function satisfies:
$\gamma_{\RR}(m)\approx 3m$, $m\to \infty$.
\item Let $\ch K=2$. The ordinary growth function satisfies:
$\gamma_{\RR}(m)\approx 3m/2$, $m\to \infty$.
\end{enumerate}
\end{Theorem}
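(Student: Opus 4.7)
The plan is to prove the weight-growth formulas (i), (ii) by direct enumeration of standard monomials of fixed weight, deduce (iii) for free from the resulting linear growth, and obtain the sharp asymptotics in (iv), (v) by translating from weight to degree.

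For (i), Lemma~\ref{Ldoismonomios} gives a complete census in characteristic different from $2$: for every integer $k\ge 2$ with $k\ne 3$ there are exactly two standard monomials of weight $k$ (one of each type), while for $k\in\{1,3\}$ there is exactly one (the false monomial $x_0x_2v_3$ of weight $3$ being excluded by Lemma~\ref{monomiofalso}). Summing the contributions up to $m$ then yields, for $m\ge 3$,
$$\tilde\gamma_{\RR}(m)=1+2+1+2(m-3)=2m-2.$$

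For (ii), by Corollary~\ref{Cchar2} the basis in characteristic~$2$ is the disjoint union of the standard monomials of the first type and the squares $\{x_{n-1}v_n\mid n\ge 2\}$ of pivot elements. The first-type count from (i) is characteristic-free: each weight $k\ge 1$ admits exactly one first-type monomial, since the bijection tail~$\leftrightarrow$~weight via $p$-adic expansion does not depend on the characteristic. The square $x_{n-1}v_n$ has weight $-2^{n-1}+2^n=2^{n-1}$, so the number of such squares of weight $\le m$ equals $\#\{n\ge 2\mid 2^{n-1}\le m\}=[\log_2 m]$. Adding the two counts gives $\tilde\gamma_{\RR}(m)=m+[\log_2 m]$.

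Part (iii) is then immediate: the linear growth of $\tilde\gamma_{\RR}$ in both cases forces $\GKdim\RR=\LGKdim\RR=1$, and since the weighted and ordinary growth functions are equivalent (Section~\ref{Sdef}), $\RR$ itself has linear growth. For (iv), (v), I would translate from weight to degree via~\eqref{relacao}: a homogeneous monomial of multidegree $(n_1,n_2)$ satisfies $\deg(w)=n_1+n_2=(2\wt(w)+\swt(w))/3$. By Lemma~\ref{gerais}, a standard monomial of length $n$ satisfies $|\swt(w)|\le n/2+3/2$ and $2^{n-2}<\wt(w)$; these combine to $|\swt(w)|\le \tfrac12\log_2\wt(w)+O(1)$. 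Hence $\deg(w)\le m$ is equivalent, up to an $O(\log m)$ correction in $\wt(w)$, to $\wt(w)\le 3m/2$, and the sandwich
$$\tilde\gamma_{\RR}\bigl(\tfrac{3m}{2}-O(\log m)\bigr)\le\gamma_{\RR}(m)\le\tilde\gamma_{\RR}\bigl(\tfrac{3m}{2}+O(\log m)\bigr)$$
together with (i), (ii) gives $\gamma_{\RR}(m)=3m+O(\log m)\approx 3m$ in characteristic $\ne 2$, and $\gamma_{\RR}(m)=3m/2+O(\log m)\approx 3m/2$ in characteristic~$2$.

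The main technical point is the asymptotic in (iv), (v): one must verify that the superweight correction stays $o(m)$, so that the sharp coefficients $3$ and $3/2$ remain visible after inverting $\deg\leftrightarrow\wt$. Since $|\swt|$ grows only logarithmically in $\wt$ while $\tilde\gamma_\RR$ is linear, the correction is genuinely of order $\log m$ and the asymptotic follows cleanly. The enumerations in (i), (ii) are routine once Lemma~\ref{Ldoismonomios} and Corollary~\ref{Cchar2} are in hand.
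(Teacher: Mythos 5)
Your proposal is correct and follows essentially the same route as the paper: parts (i)--(iii) via the weight census of Lemma~\ref{Ldoismonomios} and Corollary~\ref{Cchar2}, and parts (iv)--(v) by converting degree to weight through $\deg(w)=\tfrac{2}{3}\wt(w)+\tfrac{1}{3}\swt(w)$ with the logarithmic superweight bounds. The only difference is presentational: the paper runs a $\delta$-argument ($(\tfrac32-\delta)n\le Z_1\le(\tfrac32+\delta)n$ for large $n$) where you use an explicit $O(\log m)$ sandwich composed with the formulas for $\tilde\gamma_{\RR}$, and both yield the same asymptotics.
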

\begin{proof}
Consider $\ch K\neq 2$. Using claim~(iii) of Lemma~\ref{Ldoismonomios},
we compute the number of standard monomials of weight not exceeding $m$ and obtain that
$\tilde{\gamma}_{\RR}(m)=2m-2$, $m\ge 3$.

Let $\ch K=2$.
By Corollary $\ref{Cchar2}$, standard monomials of the first type and squares of the pivot elements,
namely $\lbrace x_{n-1}v_n\mid n\geq 2\rbrace$, constitute a basis of $\RR$.
For each $m\geq 1$ we have exactly one standard monomial of the first
type of weight $m$ (Lemma~\ref{Ldoismonomios}).
One has $\wt(x_{n-1}v_n)=2^{n-1}$, $n\ge 2$.
Now, claim (ii) follows. In any characteristic, we get $\GKdim\RR=\LGKdim\RR=1$.

Let us prove claims (iv) and (v).
Fix notations. Let $v$ be a standard monomial, $(X_1,X_2)$, $(Z_1,Z_2)$
its multidegree and weight coordinates, and $n=\deg(v)=X_1+X_2$ its degree.
Below, $m\in\N$ will be a parameter.
The ordinary growth function $\gamma_{\RR}(m)$, $m\in\N$,
counts standard monomials $v\in\RR_{X_1X_2}$ of bounded degree, i.e.
$n=X_1+X_2\leq m$.
By~\eqref{relacao}, we have
$$n=X_1+X_2=\frac{Z_1+2Z_2}{3}+\frac{Z_1-Z_2}{3}=\frac{2}{3}Z_1+\frac{1}{3}Z_2.$$
We apply estimates of Theorem~\ref{TcurvesL}
\begin{align*}
\frac{2}{3}Z_1-\frac{\log_2 Z_1+5}{6}
&\leq n\leq
\frac{2}{3}Z_1+\frac{\log_2 Z_1+4}{6};\\
\lim_{Z_1\to +\infty}\frac {n}{Z_1}&=\frac 23;\\
\lim_{n\to +\infty}\frac{Z_1} {n}&=\frac 32.
\end{align*}
Hence, for any $\delta>0$ there exists $N_\delta\in\mathbb{N}$ such that
\begin{equation}\label{epsilon}
\left(\frac{3}{2}-\delta\right)n\leq Z_1\leq \left(\frac{3}{2}+\delta\right)n,\qquad
n\geq N_\delta.
\end{equation}
Let $M_{\delta}$ be the number of standard monomials of degree less than $N_\delta$.
Fix $m\in\mathbb{N}$.
Consider a standard monomial $v$ of degree $n$ such that $N_{\delta}\le n\le m$.
By upper bound~\eqref{epsilon},
\begin{equation}\label{cotaparaz1}
Z_1\leq \left(\frac{3}{2}+\delta\right)n\leq\left(\frac{3}{2}+\delta\right)m.
\end{equation}
Now assume that $\ch K\neq 2$.
For each weight $Z_1$ there are at most two standard monomials (Lemma~\ref{Ldoismonomios}).
Using~\eqref{cotaparaz1}, we get an upper bound on the growth:
\begin{equation}\label{cotasuperiorgama}
\gamma_{\RR}(m)\leq 2\left(\frac{3}{2}+\delta\right)m+M_{\delta},\quad m\ge 1.
\end{equation}
Consider standard monomials with bounded weight:
\begin{equation}\label{Z_1}
Z_1\le \left(\frac{3}{2}-\delta\right)m.
\end{equation}
By lower inequality~\eqref{epsilon} it follows that $n\le m$ for these standard monomials
of degree $n\ge N_{\delta}$.
For each weight $Z_1$ satisfying~\eqref{Z_1} we have two standard monomials
(actually, we shall use (i)) and their degree is bounded by $m$.
We also subtract the number $M_\delta$ of monomials which might not satisfy~\eqref{epsilon}.
So, we get a lower bound:
\begin{equation}\label{cotainferiorgama}
\gamma_{\RR}(m)\geq 2\left(\frac{3}{2}-\delta\right)m-2-M_\delta,\quad m\ge 1.
\end{equation}
Since $\delta>0$ was chosen arbitrary, \eqref{cotasuperiorgama} and \eqref{cotainferiorgama}
imply that $\mathop{\lim}\limits_{m\rightarrow\infty} \gamma_{\RR}(m)/m=3$.

Consider $\ch K=2$. Similarly, using item (ii) of Theorem~\ref{TRgrowth}, we get
$$\left(\frac{3}{2}-\delta\right)m-M_\delta
\leq\gamma_{\RR}(m)\leq \left(\frac{3}{2}+\delta\right)m+\left[\log_2((3/2+\delta)m)\right]+M_{\delta},\quad m\ge 1.$$
We conclude that $\mathop{\lim}\limits_{m\rightarrow\infty} \gamma_{\RR}(m)/m=3/2$.
\end{proof}

\section{Basis, bounds on weights, and growth of associative hull $\AA$}
\label{SweightboundsA}

Now our goal is to determine growth of the associative hull $\AA=\Alg(v_0,v_1)$.
For a series of previous examples of (fractal, or self-similar) (restricted) Lie (super)algebras, bases for
respective associative hulls were not found~\cite{PeSh09,PeShZe10,PeSh13fib,Pe17,Pe16}.
Instead, we used a bypass approach.
We considered bigger (restricted) Lie (super)algebras $\tilde{\RR}\supset\RR$
whose  bases were given by so called {\it quasi-standard monomials}
and we determined and used bases of their associative hulls $\tilde{\AA}=\Alg(\tilde{\RR})\supset\AA$.

The virtue of the present example is that we are able to describe explicitly a basis of the associative hull~$\AA$.
In this section we find a basis of $\AA$ and establish bounds on its weights.
These bounds allow us to describe a geometric position of the basis of $\AA$ on plane.
Finally, we specify the growth of $\AA$.

\begin{Theorem}\label{TbasisA}
Let $\AA=\Alg(v_0,v_1)\subset\End(\Lambda)$, $\ch K\ne 2$. Then
\begin{enumerate}
\item a basis of $\AA$ is given by monomials
\begin{align*}
&\{v_0, v_1v_0^*\}\bigcup \left\{x_0^{\xi_0}x_1^{\xi_1} v_2v_1^*v_0^*
\ \Big|\ \xi_0+\xi_1\le 1\right\}\bigcup\\
&\quad\bigcup_{n=3}^\infty
  \left\{ r_{n-3}x_{n-2}^{\xi_{n-2}}x_{n-1}^{\xi_{n-1}}v_n v_{n-1}^{\alpha_{n-1}}v_{n-2}^{\alpha_{n-2}}\cdots v_0^{\alpha_0}
\ \Big|\
\xi_i,\alpha_i\in\{ 0,1\};\ \xi_{n-2}+\xi_{n-1}\leq 1+\alpha_{n-1}\right\},
\end{align*}
where $r_{n-3}$ are tail monomials, and $b^*$ denote all $b^\a$, $\a\in\{0,1\}$.
In case $n=3$, we additionally assume that the
monomials containing both $\{x_0,x_2\}$ are only of type:
$ x_0x_2v_3v_2v_1^*v_0^*;$
\item let $\ch K=2$, then $\AA$ is contained in span of the monomials above;
\item monomials with $\xi_{n-1}=0$ are linearly independent and belong to $\AA$ in case of any field.
\end{enumerate}
\end{Theorem}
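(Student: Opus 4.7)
The plan is to prove the three claims in order: containment in $\AA$, spanning, and linear independence. That each listed monomial belongs to $\AA$ follows from the fact that every pivot $v_i$ lies in $\RR \subset \AA$ by Lemma~\ref{Lpivot}, combined with the identity $v_i^2 = x_{i+1} v_{i+2}$ of Lemma~\ref{Lbasicas}, which already produces monomials of the form $x_{n-1} v_n \cdot w$ for any descending product $w$ of earlier pivots. Additional Grassmann letters $x_j$ with $j \le n-3$ are introduced by left multiplication by a low-index pivot $v_j$ and using the action on Grassmann letters \eqref{action}; the resulting extra monomials then combine with the existing letters via anticommutativity and $x_i^2 = 0$, after which one selects the desired tail $r_{n-3}$ by taking suitable linear combinations. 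The special form at $n = 3$ (requiring $v_2$ in the tail whenever both $x_0$ and $x_2$ occur) emerges naturally, since $x_0 x_2 v_3$ by itself has a negative multidegree coordinate in $\{v_0, v_1\}$ by Lemma~\ref{monomiofalso} and hence can only appear as part of a larger product.

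For spanning I would reduce an arbitrary associative word $v_{i_1} v_{i_2} \cdots v_{i_k}$ in the pivots---which generate $\AA$ since each $v_j$ lies in $\RR \subset \AA$---to the stated normal form. Each reduction step applies the recursive expansion $v_i = \dd_i + x_i x_{i+1} v_{i+2}$ together with the Grassmann commutation rules $\dd_i x_i + x_i \dd_i = 1$, $\dd_i x_j = -x_j \dd_i$ for $i \ne j$, $x_i^2 = \dd_i^2 = 0$, and super-anticommutativity. The guiding principle is to identify the largest index $n$ that must appear as a head $v_n$, push all Grassmann factors to its left, and let the remaining pivots string out as a descending product $v_{n-1}^{\alpha_{n-1}} \cdots v_0^{\alpha_0}$. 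The combinatorics parallel the reduction in the proof of Theorem~\ref{Tbase} but admit one extra degree of freedom, because we no longer enforce the Lie bracket on the tail of $v$'s; the constraint $\xi_{n-2} + \xi_{n-1} \le 1 + \alpha_{n-1}$ then records the only genuine cancellation that occurs, namely that an extra $x_{n-1}$ factor can coexist with $x_{n-2}$ only when it is ``protected'' by a $v_{n-1}$ in the tail.

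Linear independence is established using the $\Z^2$-grading of $\AA$ by multidegree in $\{v_0, v_1\}$ (Lemma~\ref{Lz2graduacao}) together with the weight functions (Lemma~\ref{Lpesos}), which are additive on associative products by Lemma~\ref{aditividade}. Each candidate monomial has a computable vector weight, pinning down most of the combinatorial data; within each $\Z^2$-component I would verify independence by evaluating the surviving candidates on a suitable basis of Grassmann monomials $x_{j_1}\cdots x_{j_r} \in \Lambda$ and using \eqref{action} to read off linearly independent images, exploiting the fact that $v_n$ is the unique pivot acting nontrivially on $x_n$. Claim (ii) in characteristic $2$ then follows from the same spanning argument, since the relations used remain valid and the formal quadratic mapping on $\RR$ coincides with the ordinary square in $\End\Lambda$. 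Claim (iii) is the easier special case: when $\xi_{n-1} = 0$ the constraint collapses to $\xi_{n-2}\le 1$, which is automatic, so no delicate cancellations intervene, and the evaluation argument on Grassmann monomials proves independence in arbitrary characteristic.

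The main obstacle I expect is the spanning step: tracking the reduction of a general associative word without ever producing monomials outside the prescribed list. In particular, verifying that the interaction of two adjacent high-index pivots never forces a forbidden pattern---such as a ``thick head'' $x_{n-2} x_{n-1} v_n$ without a protecting $v_{n-1}$, or the false monomial $x_0 x_2 v_3$ at $n=3$---requires a careful case analysis on the relative indices and on which of the two summands of $v_i = \dd_i + x_i x_{i+1} v_{i+2}$ is used at each step. This is strictly harder bookkeeping than the analogous reduction for $\RR$, because the associative structure of $\AA$ admits fewer simplifying identities than the supercommutator, which is precisely why bases of associative hulls were not exhibited in the earlier examples cited in Section~\ref{SweightboundsA}.
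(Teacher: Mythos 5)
Your overall architecture (containment, spanning, independence) matches the paper's, but the first two steps have genuine gaps. For containment, your mechanism for building the tail $r_{n-3}$ --- ``additional Grassmann letters $x_j$ are introduced by left multiplication by a low-index pivot $v_j$'' --- does not work: multiplying by a pivot $v_j$ either \emph{deletes} the letter $x_j$ (via $v_j(x_j)=1$, see~\eqref{action}) or, upon expanding $v_j=\dd_j+x_jx_{j+1}v_{j+2}$, produces a sum of a $\dd_j$-term and a term carrying the \emph{pair} $x_jx_{j+1}$ with a new head $v_{j+2}$; neither operation inserts a single prescribed letter. The paper avoids this entirely by quoting the basis of $\RR$ (Theorem~\ref{Tbase}): when $\xi_{n-2}+\xi_{n-1}\le 1$ the initial segment $r_{n-3}x_{n-2}^{\xi_{n-2}}x_{n-1}^{\xi_{n-1}}v_n$ is itself a standard monomial, hence lies in $\RR\subset\AA$, and one multiplies it by pivots; in the ``thick head'' case $\xi_{n-2}=\xi_{n-1}=1$ (forcing $\alpha_{n-1}=1$) one uses the factorization $w=(r_{n-3}x_{n-1}v_n)\cdot(x_{n-2}v_{n-1})\cdot v_{n-2}^{\alpha_{n-2}}\cdots v_0^{\alpha_0}$, and at $n=3$ one uses $x_0x_2v_3v_2=(x_2v_3)\cdot(x_0v_2)$. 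These factorizations are exactly the content of the constraint $\xi_{n-2}+\xi_{n-1}\le 1+\alpha_{n-1}$, and they are absent from your argument; your multidegree remark about $x_0x_2v_3$ explains why the restriction at $n=3$ exists but does not show the allowed monomials actually lie in $\AA$.

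For spanning, you state the goal correctly but explicitly defer the reduction as ``the main obstacle,'' and the route you propose (recursively expanding $v_i=\dd_i+x_ix_{i+1}v_{i+2}$ inside raw pivot words) has no evident termination and forces you to control stray $\dd_i$'s, which do not appear in the normal form. The paper's reduction is different and is what makes the bookkeeping finite: any word in $v_0,v_1$ is a product of standard monomials, which one straightens PBW-style by length (a swap of adjacent factors costs a supercommutator, again a combination of standard monomials by Theorem~\ref{Tbase}); two adjacent factors of \emph{equal} length collapse into one longer standard monomial via $(r_{n-1}v_n)(r'_{n-1}v_n)=\pm r''_{n-1}v_n^2=\pm r''_{n-1}x_{n+1}v_{n+2}$, so one reaches products in strictly decreasing length order; and then every Grassmann letter $x_i$ sitting in a factor of length $j<n$ has $i<j$, hence anticommutes with all preceding heads $v_k$ (since $k>i$ gives $v_k(x_i)=0$) and moves to the far left at the cost of a sign only. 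The inequality on $\xi_{n-2},\xi_{n-1},\alpha_{n-1}$ and the exclusion of the false monomial then follow from a short case analysis on whether a factor of length $n-1$ occurs. Without this mechanism the spanning claim is unproved. Your independence sketch via the $\Z^2$-grading and evaluation on $\Lambda$ is a reasonable substitute for the paper's citation of~\cite{PeSh09}, but it does not rescue the other two steps.
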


\begin{proof}
We refer to $n$ above as to the {\it length} of the respective monomial.
First, we check that the listed monomials belong to $\AA$.
Let $n\ge 4$.
Consider
$w=r_{n-3}x_{n-2}^{\xi_{n-2}}x_{n-1}^{\xi_{n-1}}v_n v_{n-1}^{\alpha_{n-1}}v_{n-2}^{\alpha_{n-2}}\cdots v_0^{\alpha_0}$,
satisfying our condition $\xi_{n-2}+\xi_{n-1}\leq 1+\alpha_{n-1}$.
We have two cases a) If $\xi_{n-2}+\xi_{n-1}\leq 1$, then
the initial factor $r_{n-3}x_{n-2}^{\xi_{n-2}}x_{n-1}^{\xi_{n-1}}v_n$
is a standard monomial.
Since $v_i\in\RR$, we conclude that $w\in\AA$.
b) Consider the case $\xi_{n-2}=\xi_{n-1}=1$, by the assumed inequality, $\alpha_{n-1}=1$.
Then also
\begin{equation*}
w=(r_{n-3}x_{n-1}v_n)\cdot(x_{n-2}v_{n-1})\cdot v_{n-2}^{\alpha_{n-2}}\cdots v_0^{\alpha_0}\in\AA.
\end{equation*}
The cases $n=0,1,2$ are easily checked. Consider $n=3$.
The arguments in two cases above fail only when they lead to using the false monomial $x_0x_2v_3$
as the initial factor.
Thus, the algorithm above fail to get monomials containing both $\{x_0,x_2\}$.
We have to check that we can obtain the monomials specified in the wording that contain both $\{x_0,x_2\}$.
To obtain the claimed monomials we use that $x_0x_2v_3v_2=x_2v_3\cdot x_0v_2\in \AA$.
The linear independence follows by the same arguments as~\cite[proof of Theorem 4.1]{PeSh09}.

Second, we shall prove that products of the standard monomials are expressed via the associative monomials listed above.
We work with products of standard monomials, we reorder such products using PBW-like arguments,
where a total order is fixed obeying to the length $n$ of standard monomials.
In this process, we shall eliminate not only squares of odd standard monomials but also products of two
standard monomials of the same length:
$$(r_{n-1}v_n)\cdot(r'_{n-1}v_n)=\pm r''_{n-1}v_n^2=\pm r''_{n-1}x_{n+1}v_{n+2}.$$
The obtained monomial is standard
(we cannot obtain the false monomial because one has the only unsuccessful possibility to get it: $v_1\cdot v_1=x_2v_3$).
As a result, we get products of standard monomials of different lengths, written in strictly length-decreasing order.
For example, we obtain:
\begin{equation}\label{decrescente}
r_{n-1}v_n\cdot(r_{n-2}v_{n-1})\cdots (v_0),
\end{equation}
where we have one standard monomial of length $n$, while the remaining
standard monomials of lengths $n-1,\ldots,0$ are optional.

Consider $n\ge 4$.
Now, we move all Grassmann letters in (\ref{decrescente}) to the left.
Let $x_i$ be a Grassmann variable in a standard monomial $r_{j-1}v_j$, $j<n$, then  $i<j$.
The standard monomials before it in (\ref{decrescente}) have lengths greater than $j$, thus, greater than $i$.
So, $x_i$ supercommutes with all preceding heads $v_k$.
We obtain an associative monomial as desired,
we only need to check the inequality on the senior indices.
Consider two cases. a) The factor $r_{n-2}v_{n-1}$ does not appear in (\ref{decrescente}).
The senior factor $r_{n-1}v_n$ is standard, thus,
it contains at most one of the variables $\lbrace x_{n-2},x_{n-1}\rbrace$.
Since it is not possible to get any of these variables from monomials of smaller length,
we get $\xi_{n-2}+\xi_{n-1}\leq 1$ and $\alpha_{n-1}=0$, and the required inequality is valid.
b)~The factor $r_{n-2}v_{n-1}$ appears in (\ref{decrescente}).
Then we get $\alpha_{n-1}=1$ and the desired inequality on the senior indices is satisfied.

Consider $n=2$. Recall that all factors in~\eqref{decrescente} are standard monomials.
So, the first factor is either $x_0v_2$ or $x_1v_2$, followed by optional factors $v_1$, $v_0$.
We get the desired monomials of length 2.
The cases $n=0,1$ are trivial.

Consider $n=3$.
We can get $x_2$ in~\eqref{decrescente} only from the first factor.
By Theorem~\ref{Tbase}, there is only one such standard monomial, namely, $r_2v_3=x_2v_3$.
Now, $x_0$ can appear only from the second factor, and there is only one such standard monomial: $r_1v_2=x_0v_2$.
The possible remaining factors are $v_1$ and $v_0$.
Thus, we arrive at the claimed monomials: $x_0x_2v_3v_2v_1^*v_0^*$.
The first claim is proved.

The second claim follows by the same arguments, some monomials can only disappear.
The third claim follows because $r_{n-2}v_n,v_{n-1},\ldots,v_0\in\RR\subset\AA$.
\end{proof}

\begin{Lemma}\label{estimativasatil}
Let $w=r_{n-3}x_{n-2}^{\xi_{n-2}}x_{n-1}^{\xi_{n-1}}v_n v_{n-1}^{\alpha_{n-1}}v_{n-2}^{\alpha_{n-2}}\cdots v_0^{\alpha_0}$
be a basis monomial of $\AA$ of length $n$.
Then
$$2^{n-2}<\wt(w)<2^{n+1},\qquad |\swt(w)|\leq n+1;\qquad n\ge 0.$$
\end{Lemma}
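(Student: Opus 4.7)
The plan is to use the additivity of the weight and superweight functions (Lemma~\ref{aditividade}) together with the numerical values from Lemma~\ref{Lpesos}: $\wt(v_i)=2^i$, $\wt(x_i)=-2^i$, $\swt(v_i)=(-1)^i$, $\swt(x_i)=(-1)^{i+1}$. Writing out $w$, we get
\begin{equation*}
\wt(w)=\wt(r_{n-3})-\xi_{n-2}2^{n-2}-\xi_{n-1}2^{n-1}+2^n+\sum_{i=0}^{n-1}\alpha_i 2^i,
\end{equation*}
with $\wt(r_{n-3})=-\sum_{i=0}^{n-3}\xi_i 2^i\in\{-2^{n-2}+1,\ldots,0\}$ by the argument used in Lemma~\ref{Lestimativas}.

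The upper bound $\wt(w)<2^{n+1}$ is immediate: take the largest positive contribution by setting all $\alpha_i=1$, $\xi_j=0$, $r_{n-3}=1$, giving $\wt(w)\le 2^n+(2^n-1)=2^{n+1}-1$. For the lower bound I would split on the admissibility constraint $\xi_{n-2}+\xi_{n-1}\le 1+\alpha_{n-1}$. When $\xi_{n-2}+\xi_{n-1}\le 1$ one checks three small subcases (both zero; only $x_{n-2}$; only $x_{n-1}$), and the worst is $\xi_{n-1}=1$, which yields $\wt(w)\ge -2^{n-2}+1-2^{n-1}+2^n=2^{n-2}+1$. When $\xi_{n-2}=\xi_{n-1}=1$, the constraint forces $\alpha_{n-1}=1$, and the compensating $+2^{n-1}$ pushes the minimum up to $2^{n-1}+1$. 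In every case $\wt(w)>2^{n-2}$, which is the desired bound. A separate sanity check handles the small values $n\in\{0,1,2,3\}$ and the exceptional monomials $x_0x_2v_3v_2v_1^*v_0^*$ of length $3$.

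For the superweight bound, grouping letters by index yields
\begin{equation*}
\swt(w)=(-1)^n+\sum_{i=0}^{n-1}(\alpha_i-\xi_i)(-1)^i,
\end{equation*}
where we adopt the convention $\xi_i=0$ for $i<n-2$ if $x_i$ does not occur in $r_{n-3}$. Since $\alpha_i,\xi_i\in\{0,1\}$, each coefficient $\alpha_i-\xi_i$ lies in $\{-1,0,1\}$, so $|\swt(w)|\le 1+n$, which is exactly the asserted estimate.

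The main obstacle is the lower weight bound, because the coupling inequality $\xi_{n-2}+\xi_{n-1}\le 1+\alpha_{n-1}$ between the ``neck'' variables and the second $v_{n-1}$ factor must be used to rule out configurations that would otherwise violate $\wt(w)>2^{n-2}$; the remainder reduces to bookkeeping of geometric sums.
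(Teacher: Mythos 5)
Your proof is correct. For the weight bounds it follows essentially the same route as the paper: the paper also splits into the case where $w$ contains at most one of $\{x_{n-2},x_{n-1}\}$ (worst configuration $r_{n-3}=x_0\cdots x_{n-3}$ together with the neck $x_{n-1}$, giving the minimum $2^{n-2}+1$) and the case $\xi_{n-2}=\xi_{n-1}=1$, where the admissibility constraint forces the factor $v_{n-1}$ and lifts the minimum to $2^{n-1}+1$; the common upper bound is $2^n+2^{n-1}+\cdots+2^0=2^{n+1}-1$, and the values $n\in\{0,1,2\}$ are checked directly, just as in your sanity check. Where you genuinely depart from the paper is the superweight estimate. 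The paper splits on the parity of $n$ and bounds three pieces separately --- the full Grassmann part, the head $v_n$, and the tail $v_{n-1}^{\alpha_{n-1}}\cdots v_0^{\alpha_0}$ --- obtaining $-2k+1\le\swt(w)\le 2k+1$ for $n=2k$ and $-2k-2\le\swt(w)\le 2k$ for $n=2k+1$, each of which yields $|\swt(w)|\le n+1$. You instead pair the letters $x_i$ and $v_i$ of equal index, writing
$$\swt(w)=(-1)^n+\sum_{i=0}^{n-1}(\alpha_i-\xi_i)(-1)^i,$$
and conclude at once by the triangle inequality, since each $\alpha_i-\xi_i$ lies in $\{-1,0,1\}$. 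Your version is shorter, avoids the parity case split entirely, and makes it transparent why the bound is exactly $n+1$; the paper's version yields slightly finer parity-dependent (asymmetric) bounds, but these refinements are never used later. Both arguments rest on the same foundation (additivity of $\wt$ and $\swt$, Lemma~\ref{Lpesos}), so the difference is one of bookkeeping technique rather than of substance, and your choice is the cleaner of the two for the superweight half.
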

\begin{proof}
For $n\in\lbrace 0,1,2\rbrace$, these estimates are verified directly. Suppose that $n\geq 3$.
Let $w$ contain at most one of Grassmann variables $\lbrace x_{n-2},x_{n-1}\rbrace$, we have
$$2^{n-2}<2^{n-2}+1=-(2^0+\cdots+2^{n-3})-2^{n-1}+2^n \leq\wt(w)\leq 2^n+2^{n-1}+\cdots+2^0=2^{n+1}-1<2^{n+1}.$$
In case $\xi_{n-2}=\xi_{n-1}=1$,
$w$ must contain the factor $v_{n-1}$.
So, we have the desired estimates again:
$$2^{n-2}<2^{n-1}+1=-(2^0+\cdots+2^{n-1})+2^n+2^{n-1}\leq\wt(w)\leq 2^n+2^{n-1}+\cdots+2^0=2^{n+1}-1<2^{n+1}.$$

Consider the superweight. Let $n=2k$.
By estimates $-k\leq\swt(r_{n-1})\leq k$, $\swt(v_{2k})=1$, and
$-k\leq\swt(v_{2k-1}^{\alpha_{2k-1}}\cdots v_0^{\alpha_0})\leq k$,
we have
$$-2k+1\leq\swt(w)\leq 2k+1.$$
Let $n=2k+1$. We have $-k-1\leq\swt(r_{n-1})\leq k$, $\swt(v_{2k+1})=-1$,
$-k\leq\swt(v_{2k}^{\alpha_{2k}}\cdots v_0^{\alpha_0})\leq k+1$, thus,
$$-2k-2\leq\swt(w)\leq 2k.$$
Both cases yield $\vert\swt(w)\vert\leq n+1$.
\end{proof}

\begin{Theorem}\label{TcurvesA}
Points of plane depicting the basis monomials of $\AA=\Alg(v_0,v_1)$
are bounded by two logarithmic curves in terms of the weight coordinates $\Wt(w)=(Z_1,Z_2)$:
$$\vert Z_2\vert<{\rm log}_2 Z_1+3.$$
\end{Theorem}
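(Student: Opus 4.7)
The plan is to mimic the proof of Theorem~\ref{TcurvesL} (the analogous statement for $\RR$), using Lemma~\ref{estimativasatil} in place of Lemma~\ref{gerais}. Lemma~\ref{estimativasatil} already packages both pieces of information I need: a lower bound on the weight in terms of the length $n$, and an upper bound on the absolute superweight in terms of $n$.

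Concretely, I would fix a basis monomial $w$ of $\AA=\Alg(v_0,v_1)$ of length $n\ge 0$, as described in Theorem~\ref{TbasisA}. By Lemma~\ref{estimativasatil} we have
\begin{equation*}
2^{n-2}<\wt(w)=Z_1,\qquad |Z_2|=|\swt(w)|\le n+1.
\end{equation*}
From the first inequality I take $\log_2$ to obtain $n-2<\log_2 Z_1$, that is, $n<\log_2 Z_1+2$. Substituting this into the superweight bound gives $|Z_2|\le n+1<\log_2 Z_1+3$, which is exactly the claim.

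Since Lemma~\ref{estimativasatil} already does all the work, there is no real obstacle here; the argument is a one-line elimination of the length parameter $n$ between the two estimates. The only thing to double-check is that the estimates are stated uniformly for all $n\ge 0$ (including the small cases $n\in\{0,1,2\}$), which is asserted in Lemma~\ref{estimativasatil}, so no separate base case needs to be handled.
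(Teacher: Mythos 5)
Your proposal is correct and follows exactly the paper's own proof: both apply Lemma~\ref{estimativasatil} to get $2^{n-2}<\wt(w)=Z_1$ and $|Z_2|=|\swt(w)|\le n+1$, then eliminate $n$ to obtain $|Z_2|<\log_2 Z_1+3$. Your explicit note that the lemma's bounds hold uniformly for all $n\ge 0$ (including the small cases) is a minor clarification the paper leaves implicit, but the argument is the same.
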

\begin{proof}
Let $w$ be a basis monomial of $\AA$ of length $n$.
By Lemma \ref{estimativasatil}, one has estimates:
$2^{n-2}<\wt(w)=Z_1$,  and $|Z_2|=|\swt(w)|\leq n+1$.
Thus,  $\vert Z_2\vert<{\rm log}_2 Z_1+3$.
\end{proof}

\begin{Theorem}\label{TgrowthA}
Let $\AA=\Alg(v_0,v_1)$. The growth of $\AA$ is quadratic, namely:
\begin{enumerate}
\item there exist constants $c_1,c_2,c_3,c_4>0$ such that
\begin{enumerate}
\item $c_1m^2\leq\tilde{\gamma}_{\AA}(m)\leq c_2m^2$, $m\ge 1$ (the weight growth function);
\item $c_3m^2\leq\gamma_{\AA}(m)\leq c_4m^2$, $m\ge 1$ (the ordinary growth function);
\end{enumerate}
\item $\GKdim\AA=\LGKdim\AA=2$.
\end{enumerate}
\end{Theorem}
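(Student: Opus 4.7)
The plan is to count basis monomials of $\AA$ described in Theorem~\ref{TbasisA} by length, use the weight bounds of Lemma~\ref{estimativasatil} to convert this count into bounds on $\tilde\gamma_\AA(m)$, and finally translate from weight to degree using the coordinate change~\eqref{relacao}.

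First I would count, for each length $n\ge 3$, the number $N_n$ of basis monomials $w=r_{n-3}x_{n-2}^{\xi_{n-2}}x_{n-1}^{\xi_{n-1}}v_n v_{n-1}^{\alpha_{n-1}}\cdots v_0^{\alpha_0}$. The tail $r_{n-3}=x_0^{\xi_0}\cdots x_{n-3}^{\xi_{n-3}}$ contributes $2^{n-2}$ choices, the exponents $(\alpha_{n-1},\dots,\alpha_0)\in\{0,1\}^n$ contribute $2^n$ choices, and the constrained pair $(\xi_{n-2},\xi_{n-1})$ contributes a bounded factor. This shows that $c'\cdot 4^n\le N_n\le c''\cdot 4^n$ for some absolute constants (the lower bound being obtained by restricting to $\xi_{n-2}=\xi_{n-1}=0$ and $\alpha_i=0$ for $i<n-1$; the exceptional case $n=3$ only affects constants).

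Next, by Lemma~\ref{estimativasatil}, every basis monomial of length $n$ satisfies $2^{n-2}<\wt(w)<2^{n+1}$. Hence for $m\ge 1$, the monomials with $\wt(w)\le m$ are contained in the union of length-$n$ classes with $n\le\log_2 m+2$, so
\[
\tilde\gamma_\AA(m)\le\sum_{n=0}^{\lfloor\log_2 m\rfloor+2}N_n\le C\cdot 4^{\log_2 m+2}=C'\,m^2.
\]
Conversely, every monomial of length $n\le\log_2 m-1$ has $\wt(w)<2^{n+1}\le m$, so
\[
\tilde\gamma_\AA(m)\ge\sum_{n=0}^{\lfloor\log_2 m\rfloor-1}N_n\ge c\cdot 4^{\log_2 m-1}=c'\,m^2,
\]
proving (i)(a).

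For (i)(b), I would compare degree and weight via the coordinate relation $\deg(w)=X_1+X_2=\tfrac{2Z_1+Z_2}{3}$ from~\eqref{relacao} together with Theorem~\ref{TcurvesA}, which gives $|Z_2|<\log_2 Z_1+3$. Thus for weight $Z_1$ large, $\deg(w)=\tfrac{2}{3}Z_1+O(\log Z_1)$, so weight and degree are comparable up to a multiplicative constant and a logarithmic error. Consequently, for some constants $a_1,a_2>0$ and $m$ large, the inclusions $\{\wt\le a_1 m\}\subset\{\deg\le m\}\subset\{\wt\le a_2 m\}$ hold among basis monomials, whence $\tilde\gamma_\AA(a_1 m)\le\gamma_\AA(m)\le\tilde\gamma_\AA(a_2 m)$ and the quadratic bound (i)(b) follows from (i)(a). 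Part (ii) is then immediate from the definition of the upper and lower Gelfand-Kirillov dimensions.

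The only step that requires any care is the lower counting estimate: one must verify that the chosen subfamily of basis monomials is genuinely linearly independent (this is part of claim (iii) of Theorem~\ref{TbasisA}) and avoids double-counting across different lengths, which is ensured by the fact that the highest-index generator $v_n$ appears exactly once in each basis monomial and thus determines $n$ uniquely. All other computations are routine geometric series and the coordinate translation already prepared in Section~\ref{Sweight}.
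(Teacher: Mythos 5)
Your overall strategy is the paper's own: use the explicit basis of $\AA$ from Theorem~\ref{TbasisA}, tie weight to length via Lemma~\ref{estimativasatil}, count roughly $4^n$ basis monomials per length $n$, sum a geometric series to get the quadratic bounds on $\tilde\gamma_{\AA}$, and then pass to the ordinary growth function. However, there is one genuine slip, in the justification of the lower bound $c'\cdot 4^n\le N_n$: restricting to $\xi_{n-2}=\xi_{n-1}=0$ \emph{and} $\alpha_i=0$ for $i<n-1$ leaves only the $2^{n-2}$ tail choices times the two choices of $\alpha_{n-1}$, i.e.\ $2^{n-1}$ monomials of length $n$; summing these over $n\le\log_2 m-1$ gives a \emph{linear}, not quadratic, lower bound, so the restriction as stated destroys the proof. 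The fix is to keep all exponents $\alpha_{n-1},\dots,\alpha_0$ free and restrict only the $\xi$'s: taking $\xi_{n-1}=0$ suffices, and this is exactly the hypothesis of claim (iii) of Theorem~\ref{TbasisA}, which guarantees membership in $\AA$ and linear independence over any field (the point you correctly flag at the end, but whose quantitative content you then contradict). This yields $2^{2n-1}$ monomials of length $n$, which is what the paper counts with its family $r_{n-2}v_n v_{n-1}^{\alpha_{n-1}}\cdots v_0^{\alpha_0}$ taken at the single length $n=[\log_2 m]-1$.

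A second, minor point: for part (i)(b) your detour through Theorem~\ref{TcurvesA} and the coordinate change~\eqref{relacao} is correct but heavier than necessary. Since $1\le\wt(v_0),\wt(v_1)\le 2$, every basis monomial satisfies $\deg(w)\le\wt(w)\le 2\deg(w)$, whence $\tilde\gamma_{\AA}(m)\le\gamma_{\AA}(m)\le\tilde\gamma_{\AA}(2m)$ immediately; this is how the paper (and the general remark in Section~\ref{Sdef}) passes from the weight growth function to the ordinary one, with no logarithmic error terms to control. With the counting correction above, the rest of your argument, including the geometric series, the remark that the head $v_n$ prevents double counting across lengths, and the deduction of part (ii) from the quadratic bounds, is sound and matches the paper's proof.
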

\begin{proof}
First, let us establish bounds on the weight growth function.
Let $m\geq 2$ be fixed, set $n=[\log_2 m]-1$.
Consider all monomials $w=r_{n-2}v_n v_{n-1}^{\alpha_{n-1}}v_{n-2}^{\alpha_{n-2}}\cdots v_0^{\alpha_0}\in\AA$
(i.e. $\xi_{n-1}=0$, Theorem~\ref{TbasisA}, (iii)).
By Lemma~\ref{estimativasatil}, $\wt(w)<2^{n+1}\leq m$.
The number of these monomials yields a desired lower bound on the weight growth function:
\begin{equation*}
\tilde{\gamma}_{\AA}(m)\geq 2^{n-1}\cdot 2^n=2^{2n-1}\geq 2^{2\log_2 m-5}=\frac{1}{32}m^2.
\end{equation*}

Fix $m\ge 1$ and set $n=[\log_2 m]+3$.
Consider a monomial
$w=r_{j-3}x_{j-2}^{\xi_{j-2}}x_{j-1}^{\xi_{j-1}}
v_j v_{j-1}^{\alpha_{j-1}}\cdots v_0^{\alpha_0}\in\AA$
(without inequality on indices of Theorem~\ref{TbasisA}).
If $j>n$, then using Lemma~\ref{estimativasatil} $\wt(w)>2^{j-2}>2^{n-2}>m$.
Thus, consider monomials $w$ of bounded weight: $\wt(w)\leq m$,
then for all of them we have $j\leq n$.
Therefore, the number of monomials $w$ of length at most $n$
yields an upper bound on the weight growth function $\tilde{\gamma}_{\AA}(m)$.
Let $j\geq 0$.
We have $2^{j}$ possibilities for the Grassmann variables
and $2^{j}$ possibilities for $\alpha_{j-1},\ldots,\alpha_0$.
So, we get a desired upper bound on the weight growth function:
\begin{equation*}
\tilde{\gamma}_{\AA}(m)
\le\sum_{j=0}^n 2^{2j}
<\frac{4^{n+1}}{3}
\le\frac{4^{\log_2m+4}}{3}= \frac{2^8}{3} m^2.
\end{equation*}

We have $1\le \wt(v_0),\wt(v_1)\le 2$.
One observes that
$\tilde\gamma_\AA(m)\le \gamma_\AA(m)\le \tilde\gamma_\AA( 2m)$, $m\ge 1$.
Now one obtains bounds on the ordinary growth function. Part (ii) is trivial.
\end{proof}
\begin{Remark}
Probably, one can find the weight growth function of $\AA$ explicitly similar to Theorem~\ref{TRgrowth}.
\end{Remark}

\begin{figure}[h]
\caption{One-dimensional components $\RR_{n,m}$ are shown by dots.
Green and blue points are standard monomials of the first and second types, respectively.
Dashed red arrows point to pivot elements and
red lines depict points with weights equal to that of pivot elements.
Two green logarithmic curves restrict positions of standard monomials.
Grey dashed lines show points with constant degrees.}
\label{Fig1}
\begin{center}
\includegraphics[width=1.0\textwidth]{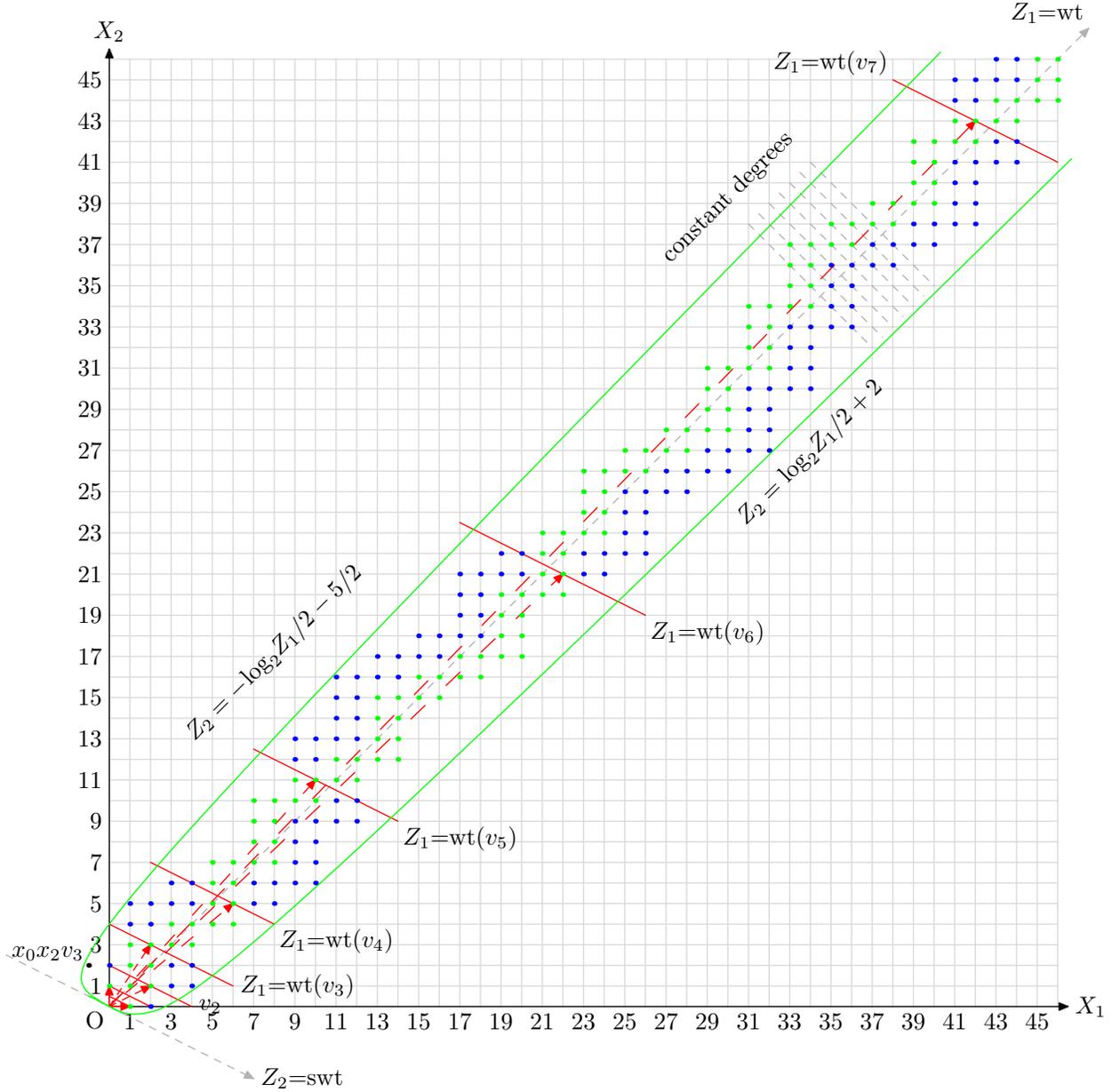}
\end{center}
\end{figure}

\section{Generating functions of Lie superalgebra $\RR$}\label{Sfunctions}

Recall that components of the $\mathbb{Z}^2$-gradation of $\RR$ are at most one-dimensional (Theorem~\ref{Tfine}).
In this section we study this gradation in terms of generating functions.
We establish recursive relations.
The recursive relations allows us to compute an initial part of the generating function,
which simplifies the proof that $\RR$ is of finite width in the next section.

Let $A=\!\!\!\mathop{\oplus}\limits_{n,n\in\mathbb{Z}}\!\! A_{nm}$ be a $\mathbb{Z}^2$-graded algebra and
$A=\!\! \mathop{\oplus}\limits_{n\in\mathbb{Z}}\!\!  A_n$,
where $A_n=\!\!\! \mathop{\oplus}\limits_{m+k=n}\!\! A_{mk}$,
a induced $\mathbb{Z}$-gradation.
Define respective {\it generating functions} (or {\it Hilbert functions}):
\begin{align*}
\mathcal{H}(A,t_1,t_2)&=\sum_{n,m}\dim A_{nm}t_1^nt_2^m;\\
\mathcal{H}(A,t)&=\sum_n \dim A_nt^n=\mathcal{H}(A,t,t).
\end{align*}
Similarly, we define generating functions for graded sets and spaces.
Recall that the Lie superalgebra $\RR=\Lie(v_0,v_1)$ is
$\mathbb{Z}^2$-graded by multidegree in the generators (Lemma~\ref{Lz2graduacao}).
Denote by $T_n^k$ the set of standard monomials of length $n$ and type $k$,
by $T_n$ all standard monomials of length $n$, where $n\ge 0$, $k\in\lbrace1,2\rbrace$.
Let $T$ be the set of all standard monomials, it is a homogeneous
basis of $\RR=\Lie(v_0,v_1)$ (in case $\ch K\ne 2$).

\begin{Lemma}\label{Ltau}
Let $U\subset T$ be a subset of standard monomials and $\tau: \RR\to \RR$ the shift endomorphism. Then
$$ \H(\tau(U),t_1,t_2)=\H(U, t_2,t_1^2t_2). $$
\end{Lemma}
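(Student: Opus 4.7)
The plan is to reduce the identity to a direct substitution using Lemma~\ref{Lend}, which describes exactly how the endomorphism $\tau$ transforms the multidegree. Since $U \subset T$ consists of standard monomials (each homogeneous of a definite multidegree $(n_1,n_2)$ in the generators $\{v_0,v_1\}$), and standard monomials are linearly independent, the generating function of $U$ is simply
\begin{equation*}
\H(U,t_1,t_2) = \sum_{w\in U} t_1^{n_1(w)} t_2^{n_2(w)},
\end{equation*}
where $\Gr(w)=(n_1(w),n_2(w))$. Correspondingly, $\H(\tau(U),t_1,t_2)$ is the analogous sum indexed by $\tau(U)$, or equivalently, the sum over $w \in U$ of monomials $t_1^a t_2^b$ where $(a,b) = \Gr(\tau(w))$.

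First I would note that $\tau$ is injective on $\RR$ (by Lemma~\ref{Lpivot}(ii)), so the images $\tau(w)$ for $w \in U$ are distinct and no cancellation or multiplicity issues arise. Next, I would apply Lemma~\ref{Lend} term-by-term: for each $w\in U$ with $\Gr(w)=(n_1,n_2)$ we have $\Gr(\tau(w))=(2n_2,n_1+n_2)$. Therefore
\begin{equation*}
\H(\tau(U),t_1,t_2) = \sum_{w\in U} t_1^{2 n_2(w)} t_2^{n_1(w)+n_2(w)}.
\end{equation*}

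Finally I would perform the substitution on the right-hand side of the claim: replacing $t_1 \mapsto t_2$ and $t_2 \mapsto t_1^2 t_2$ in $\H(U,t_1,t_2)$ gives
\begin{equation*}
\H(U,t_2,t_1^2 t_2) = \sum_{w\in U} t_2^{n_1(w)} (t_1^2 t_2)^{n_2(w)} = \sum_{w\in U} t_1^{2 n_2(w)} t_2^{n_1(w)+n_2(w)},
\end{equation*}
which matches the previous display. There is no real obstacle here beyond the bookkeeping: the content of the lemma is entirely captured by the multidegree transformation rule of Lemma~\ref{Lend}, and the equality of generating functions is just the observation that the monomial substitution $(t_1,t_2)\mapsto (t_2,t_1^2t_2)$ implements exactly this rule on exponents.
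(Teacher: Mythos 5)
Your proof is correct and takes essentially the same approach as the paper: both arguments reduce the claim to the multidegree transformation rule $\Gr(\tau(w))=(2n_2,n_1+n_2)$ of Lemma~\ref{Lend} and then check that the substitution $(t_1,t_2)\mapsto(t_2,t_1^2t_2)$ realizes exactly this rule on exponents. The only cosmetic difference is that you sum monomial-by-monomial (with an explicit injectivity remark via Lemma~\ref{Lpivot}), whereas the paper sums component-by-component over the pieces $U_{n_1,n_2}$.
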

\begin{proof}
Consider $a\in U_{n_1,n_2}$. By Lemma~\ref{Lend}, $\Gr(\tau(a))=(2n_2,n_1+n_2)$.
Hence,
\begin{equation*}
\H(\tau(U_{n_1,n_2}),t_1,t_2)=\dim U_{n_1,n_2}t_1^{2n_2}t_2^{n_1+n_2}
=\dim U_{n_1,n_2}t_2^{n_1}(t_1^2t_2)^{n_2}
=\H(U_{n_1,n_2},t_2,t_1^2t_2).
\qedhere
\end{equation*}
\end{proof}
\begin{Lemma}\label{Lbijecao}
Let $\ch K\ne 2$. For all lengths $n\ge 4$ we have bijections:
\begin{enumerate}
\item $T_{n+1}^k=\lbrace 1,x_0\rbrace\cdot\tau(T_n^k)$ for $k\in\{0,1\}$,
      (in case $k=1$, valid also for $n\ge 1$);
\item $T_{n+1}=\lbrace 1,x_0\rbrace\cdot\tau(T_n)$;
\item and equalities:\
$\H(T_{n+1},t_1,t_2)=(1+t_1^{-1})\H(\tau(T_n),t_1,t_2)= (1+t_1^{-1})\H(T_n,t_2,t_1^2t_2).$
\end{enumerate}
\end{Lemma}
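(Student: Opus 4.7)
The plan is to verify the set-theoretic bijections in (i) and (ii) directly, then translate them into the generating function identity (iii) via Lemma~\ref{Ltau} and the multidegree contribution of the Grassmann letter $x_0$.

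For (i), I first handle the first type. Applying $\tau$ to $r_{n-2} v_n \in T_n^1$ yields $\tau(r_{n-2}) v_{n+1}$, a standard first-type monomial of length $n+1$ whose tail contains no factor $x_0$. This map is visibly injective, and conversely every element of $T_{n+1}^1$ whose tail omits $x_0$ arises this way from a unique element of $T_n^1$. Left multiplication by $\{1, x_0\}$ then adjoins or omits the factor $x_0$, giving the bijection. Because the first type admits no excluded monomials, this works for all $n \ge 1$. For the second type, the analogous argument shows that $\tau$ maps $T_n^2$ injectively onto the set of second-type monomials of length $n+1$ with no $x_0$ in the tail. The subtle point here is the false monomial $x_0 x_2 v_3$ excluded from $T_3^2$: its $\tau$-image $x_1 x_3 v_4$ is a legitimate element of $T_4^2$, so the bijection fails at $n = 3$. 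For $n \ge 4$, however, neither $T_n^2$ nor $T_{n+1}^2$ has any exclusion, so the bijection holds. Part (ii) follows from (i) by taking the union over the two types.

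For (iii), I combine Lemma~\ref{Ltau}, which gives $\H(\tau(T_n), t_1, t_2) = \H(T_n, t_2, t_1^2 t_2)$, with the claim that left multiplication by $x_0$ contributes a factor of $t_1^{-1}$. To justify the latter, compute $\Wt(x_0) = (-1, -1)$ from Lemma~\ref{Lpesos} and invert the coordinate change in~\eqref{relacao} to obtain $\Gr(x_0) = (-1, 0)$, so that multiplication by $x_0$ decreases the first multidegree coordinate by $1$ and preserves the second. Hence multiplication by $\{1, x_0\}$ multiplies the generating function by $(1 + t_1^{-1})$, yielding the identity. The main obstacle throughout is the boundary behaviour at $n = 3$: the single false monomial disrupts the symmetry between $T_3^2$ and its $\tau$-image, which is why the statement is restricted to $n \ge 4$ in general, with the first-type case extending back to $n = 1$ precisely because no first-type standard monomial is ever excluded.
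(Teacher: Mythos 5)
Your proof is correct and follows essentially the same route as the paper: parts (i)--(ii) by direct inspection of the structure of the standard monomials from Theorem~\ref{Tbase}, and part (iii) by combining Lemma~\ref{Ltau} with the observation $\Gr(x_0)=(-1,0)$, which gives the factor $(1+t_1^{-1})$. The paper states (i)--(ii) as following "directly" and you merely spell out the details it leaves implicit, including the correct explanation of why the false monomial $x_0x_2v_3$ forces the restriction to $n\ge 4$ for the second type.
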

\begin{proof}
The first and second claim follow directly by the structure of the standard monomials (Theorem~\ref{Tbase}).
Since $\Gr(x_0)=-\Gr(v_0)=(-1,0)$, we get
$\mathcal{H}(\lbrace 1,x_0\rbrace,t_1,t_2)=1+t_1^{-1}.$
It remains to apply Lemma~\ref{Ltau}.
\end{proof}

\begin{Lemma}
Let $\RR=\Lie(v_0,v_1)$, $\ch K\neq2$.
Then  $\mathcal{H}(\RR,t_1,t_2)$ satisfies a  recursive relation:
$$
\mathcal{H}(\RR,t_1,t_2) =  t_1+t_1^2+t_1^3t_2+t_1^4t_2-t_1^{-1}t_2-t_1^{-1}t_2^2+(1+t_1^{-1})\cdot\mathcal{H}({\bf R},t_2,t_1^2t_2).
$$
\end{Lemma}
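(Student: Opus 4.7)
The plan is to combine the bijection from Lemma~\ref{Lbijecao} (which for $n \geq 4$ gives $\H(T_{n+1}, t_1, t_2) = (1 + t_1^{-1}) \H(T_n, t_2, t_1^2 t_2)$) with the fact that $\bigsqcup_{n \geq 0} T_n$ is a basis of $\RR$ when $\ch K \neq 2$ (Theorem~\ref{Tbase}). Thus $\H(\RR, t_1, t_2) = \sum_{n \geq 0} \H(T_n, t_1, t_2)$, and splitting the sum at $n = 4$ and applying the recursion to the tail yields
$$\sum_{n \geq 5} \H(T_n, t_1, t_2) = (1 + t_1^{-1}) \Bigl[\H(\RR, t_2, t_1^2 t_2) - \sum_{n=0}^{3} \H(T_n, t_2, t_1^2 t_2)\Bigr].$$
After adding $\sum_{n=0}^{4} \H(T_n, t_1, t_2)$ to both sides, the claim reduces to the finite polynomial identity
$$\sum_{n=0}^{4} \H(T_n, t_1, t_2) - (1 + t_1^{-1}) \sum_{n=0}^{3} \H(T_n, t_2, t_1^2 t_2) = t_1 + t_1^2 + t_1^3 t_2 + t_1^4 t_2 - t_1^{-1} t_2 - t_1^{-1} t_2^2.$$

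To verify this, I would enumerate the standard monomials of $T_0, \ldots, T_4$ using Theorem~\ref{Tbase} (taking care to exclude the false monomial $x_0 x_2 v_3$ from $T_3$) and read off their multidegrees via Lemma~\ref{multigraupivo} together with $\Gr(x_i) = -\Gr(v_i)$. After the substitution $(t_1, t_2) \mapsto (t_2, t_1^2 t_2)$ and the expansion of $(1 + t_1^{-1})(\cdots)$, almost every monomial cancels pairwise against a term of $\H(T_4, t_1, t_2)$, because the bijection of Lemma~\ref{Lbijecao} already applies at $n = 4$. The surviving positive terms are precisely the lattice points $(1,0), (2,0), (3,1), (4,1)$ coming from $T_0 \cup T_2 \cup T_4$ whose shifts cannot be reached by the inverse bijection, while the two negative corrections $-t_1^{-1} t_2$ and $-t_1^{-1} t_2^2$ record the phantom shifts $x_0 \cdot \tau(v_0) = x_0 v_1 \notin T_1$ and $x_0 \cdot \tau(x_1 v_2) = x_0 x_2 v_3 \notin T_3$.

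The main obstacle is purely clerical: tracking which lattice points appear in $\H(T_n, t_1, t_2)$ for the five relevant values of $n$ and verifying the pairwise cancellation. The only conceptual pitfall is the exclusion of the false monomial from $T_3$, whose omission forces the two negative boundary corrections and must be respected for the identity to match exactly.
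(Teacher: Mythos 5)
Your proposal is correct and follows essentially the same route as the paper: both split off $T_0,\dots,T_4$, apply the recursion of Lemma~\ref{Lbijecao} (valid for $n\ge 4$) to the tail $\sum_{n\ge 5}\H(T_n,t_1,t_2)$, and reduce the claim to a finite polynomial identity verified by enumerating the standard monomials of length at most~$4$. Your structural description of the cancellation is accurate as well --- the surviving positive terms $t_1, t_1^2, t_1^3t_2, t_1^4t_2$ and the two negative corrections $-t_1^{-1}t_2$ (from $x_0v_1\notin T_1$) and $-t_1^{-1}t_2^2$ (from the excluded false monomial $x_0x_2v_3$) are exactly what the paper's explicit expansion produces.
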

\begin{proof}
We have
\begin{align*}
T_0 & =  \lbrace v_0\rbrace;\qquad
T_1   =  \lbrace v_1\rbrace;\qquad
T_2   =  \lbrace v_2,x_0v_2,x_1v_2\rbrace;\\
T_3 & =  \lbrace v_3,x_0v_3,x_1v_3,x_2v_3,x_0x_1v_3\rbrace;\\
T_4 & =  \lbrace v_4,x_0v_4,x_1v_4,x_2v_4,x_3v_4,x_0x_1v_4,x_0x_2v_4,
        x_0x_3v_4,x_1x_2v_4,x_1x_3v_4,x_0x_1x_2v_4,x_0x_1x_3v_4\rbrace.
\end{align*}
We find the respective generating functions:
\begin{align*}
\mathcal{H}(T_0,t_1,t_2) & =  t_1;\qquad
\mathcal{H}(T_1,t_1,t_2)   =  t_2;\qquad
\mathcal{H}(T_2,t_1,t_2)   =  t_1^2+t_1t_2+t_1^2t_2;\\
\mathcal{H}(T_3,t_1,t_2) & =  t_2^2+t_1t_2^2+t_1^2t_2^2+t_1t_2^3+t_1^2t_2^3;\\
\mathcal{H}(T_4,t_1,t_2) & =  t_1^3t_2+t_1^4t_2+t_1^3t_2^2+t_1^4t_2^2+t_1^3t_2^3+t_1^4t_2^3
     +t_1^3t_2^4+t_1^4t_2^4+t_1^5t_2^4+t_1^6t_2^4+t_1^5t_2^5+t_1^6t_2^5.
\end{align*}
By Lemma \ref{Lbijecao}, we have the following bijection:
\begin{equation}\label{t0t1}
T\setminus\left(T_0\cup T_1\cup T_2\cup T_3\cup T_4\right)=\lbrace 1,x_0\rbrace\cdot\tau\left(T\setminus\left(T_0\cup T_1\cup T_2\cup T_3\right)\right).
\end{equation}

Using~\eqref{t0t1} and Lemma~\ref{Ltau}, we establish the claimed relation:
\begin{align*}
\mathcal{H}(\RR,t_1,t_2) & =  \mathcal{H}(T_0\cup T_1\cup T_2\cup T_3\cup T_4,t_1,t_2)
   +(1+t_1^{-1})\cdot\mathcal{H}(\tau(T\setminus(T_0\cup T_1\cup T_2\cup T_3)),t_1,t_2)\\
& =  \mathcal{H}(T_0\cup T_1\cup T_2\cup T_3\cup T_4,t_1,t_2)\\
&\quad  +(1+t_1^{-1})\cdot\Big(\mathcal{H}({\bf R},t_1,t_2)
   -\mathcal{H}(T_0\cup T_1\cup T_2\cup T_3,t_1,t_2)\Big)\Big|_{t_1:=t_2,\ t_2:=t_1^2t_2}\\
& =  t_1+t_2+t_1^2+t_1t_2+t_2^2+t_1^2t_2+t_1t_2^2+t_1^3t_2+t_1^2t_2^2+t_1t_2^3+t_1^4t_2+t_1^3t_2^2\\
&\quad  +t_1^2t_2^3+t_1^4t_2^2+t_1^3t_2^3+t_1^4t_2^3+t_1^3t_2^4+t_1^4t_2^4+t_1^5t_2^4+t_1^6t_2^4+t_1^5t_2^5+t_1^6t_2^5\\
&\quad  +(1+t_1^{-1})\cdot\mathcal{H}({\bf R},t_2,t_1^2t_2)-t_2-t_1^2t_2-t_1^2t_2^2-t_2^2-t_1^2t_2^3-t_1^4t_2^2-t_1^4t_2^4\\
&\quad  -t_1^6t_2^5-t_1^6t_2^4-t_1^4t_2^3-t_1^{-1}t_2-t_1t_2-t_1t_2^2-t_1^{-1}t_2^2-t_1t_2^3-t_1^3t_2^2-t_1^3t_2^4
  -t_1^5t_2^5-t_1^5t_2^4-t_1^3t_2^3\\
& =  t_1+t_1^2+t_1^3t_2+t_1^4t_2-t_1^{-1}t_2-t_1^{-1}t_2^2+(1+t_1^{-1})\cdot\mathcal{H}(\RR,t_2,t_1^2t_2).
\qedhere
\end{align*}
\end{proof}
\begin{Remark}
On Fig.~\ref{Fig1} all diagonal components are occupied. But in general, this observation is not true.
We give a list of indices of initial empty diagonal components $n$ (i.e. $\dim\RR_{n,n}=0$):
\begin{align*}
&57,114,185,217,225,227,228,229,233,249,313,370,434,450,454,455,456,458,466,498,569,626,697,\\
&729,737,739,740,741,745,761,825,857,865,867,868,869,873,889,897,899,900,901,905,907,908,\\
&909,910,911,912,913,915,916,917,921,929,931,932,933,937,953,985,993,995,996,997,1001,\ldots
\end{align*}
\end{Remark}
\section{Finite width of Lie superalgebra $\RR$}\label{Swidth}

Let $\RR=\Lie(v_0,v_1)\subset\Der \Lambda$ be the Lie superalgebra as above.
Consider the degree (natural) $\N$-gradation $\RR=\mathop{\oplus}\limits_{n=1}^\infty\RR_n$,
where $\RR_n$ is spanned by basis elements of degree $n$ in the generators $\{v_0,v_1\}$, $n\ge 1$.
Consider the respective generating function
$\mathcal{H}(\RR,t)=\mathop{\sum}\limits_{n=1}^\infty\dim \RR_n t^n$.
In case $\ch K\ne 2$, a computer computation based on Lemma~\ref{Lbijecao} yields that
all coefficients are $\{2,3,4\}$, this was checked till degree 4000:
\begin{align*}
&\mathcal{H}(\RR,t)=2t+3t^2+2t^3+3t^4+4t^5+4t^6+3t^7+2t^8+3t^9+3t^{10}+2t^{11}+3t^{12}+3t^{13}+2t^{14}+3t^{15}\\
&\quad+4t^{16}+4t^{17}+3t^{18}+2t^{19}+3t^{20}+4t^{21}+4t^{22}+3t^{23}+2t^{24}+3t^{25}+4t^{26}+4t^{27}+3t^{28}+2t^{29}+3t^{30}\\
&\quad+3t^{31}+2t^{32}+3t^{33}+3t^{34}+2t^{35}+3t^{36}+4t^{37}+4t^{38}+3t^{39}+2t^{40}+3t^{41}+3t^{42}+2t^{43}+3t^{44}+3t^{45}\\
&\quad+2t^{46}+3t^{47}+4t^{48}+4t^{49}+3t^{50}+2t^{51}+3t^{52}+3t^{53}+2t^{54}+3t^{55}+3t^{56}+2t^{57}+3t^{58}+4t^{59}+4t^{60}+\ldots
\end{align*}
Let us prove that this observation is true in general.

\begin{Theorem}\label{Twidth}
Let $\RR\subset\Der\Lambda$
be an algebra  of type specified below generated by $\{v_0,v_1\}$ over a field~$K$,
$\RR=\mathop{\oplus}\limits_{n=1}^\infty \RR_n$ the degree $\N$-gradation, and
$(\RR^n\mid n\ge1)$ the lower central series.
By Lemma~\ref{Llowercentral}, we have
$a_n=\dim\RR_n=\dim \RR^n/\RR^{n+1}$, $n\ge 1$. Then
\begin{enumerate}
\item
Let $\ch K\ne 2$, $\RR=\Lie(v_0,v_1)$ the Lie superalgebra.
Then $a_n\in\{2,3,4\}$, and $\RR$ is of width 4.
\item
Let $\ch K=2$, $\RR$ the Lie algebra.
Then $a_n\in\{1,2\}$, and $\RR$ is of width 2.
\item
Let $\ch K=2$, $\RR$ the Lie superalgebra (or the restricted Lie (super)algebra).
Then $a_n\in\{1,2,3\}$, and $\RR$ is of width 3.
\end{enumerate}
\end{Theorem}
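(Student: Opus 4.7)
My plan is to count standard monomials of fixed total degree using the fine $\Z^2$-grading. Since $\dim\RR_{n_1,n_2} \le 1$ by Theorem~\ref{Tfine}, the dimension $a_n = \dim\RR_n$ equals the number of standard monomials of multidegree $(n_1, n_2)$ with $n_1 + n_2 = n$. In $\ch K \ne 2$ the basis is given by Theorem~\ref{Tbase}; in $\ch K = 2$ it is either the type-1 monomials alone (Lie algebra case) or the type-1 monomials together with the squares $\{x_{n-1}v_n\mid n\ge 2\}$ of the pivot elements (Lie superalgebra and restricted versions), by Corollary~\ref{Cchar2}. So in each case the task reduces to a combinatorial counting problem.

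The upper bound is driven by the identity $\deg(w) = (2\wt(w) + \swt(w))/3$ coming from the change of coordinates~\eqref{relacao}, combined with the estimates of Lemmas~\ref{Ldoismonomios}, \ref{Lestimativas}, and \ref{superpeso}. Fixing the degree $n$ forces a standard monomial of weight $Z_1 = \wt(w)$ to have superweight $Z_2 = 3n - 2Z_1$. By Lemma~\ref{Ldoismonomios}, every integer weight $Z_1 \ge 2$ with $Z_1 \ne 3$ supports exactly two standard monomials, one of each type, whose superweights differ by $\pm 3$; at most one of them can then achieve degree exactly $n$. Thus $a_n$ is bounded by the number of admissible weights, and since length-$k$ monomials lie in the weight interval $(2^{k-2}, 2^k]$ by Lemma~\ref{Lestimativas}, only $O(1)$ lengths contribute to each $n$. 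Within a fixed length $k$, counting degree-$n$ monomials of the first type amounts to enumerating subsets $I \subseteq \{0, 1, \ldots, k-2\}$ with $\sum_{i \in I} \deg(v_i) = \deg(v_k) - n$, and similarly for the second type with target $2\deg(v_{k-1}) - n$; recall $\deg(v_k) = (2^{k+1} + (-1)^k)/3$. The near-binary structure of these Jacobsthal-like numbers, with $\deg(v_0) = \deg(v_1) = 1$ being the only repetition, should make such representations almost unique and yield the claimed constant bounds.

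For the lower bound $a_n \ge 2$ (respectively $\ge 1$ when $\ch K = 2$) I exhibit standard monomials of each degree directly: the two parallel lines $X_1 - X_2 = \pm 1$ contain all pivot elements by Lemma~\ref{multigraupivo}, and together with their Grassmann-shifted relatives they cover every degree~$n$. The exact width $4$ (respectively $2$ and $3$ in the $\ch K = 2$ cases) is confirmed by explicit verification of the initial dimensions via the recursive bijection $T_{n+1} = \{1, x_0\} \cdot \tau(T_n)$ of Lemma~\ref{Lbijecao}. The main obstacle will be pinning the sharp constants: ruling out $a_n = 5$ in the first case (and the analogous values in $\ch K = 2$) requires careful combinatorics of the representation problem above. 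I expect to close this gap by a joint induction on degree and length, exploiting the multidegree-doubling map $\Gr(\tau w) = (2b, a+b)$ of Lemma~\ref{Lend} that links the length-$n$ and length-$(n+1)$ patterns, together with an explicit base case verified up to some computable threshold.
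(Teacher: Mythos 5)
Your reduction is sound as far as it goes: by Theorem~\ref{Tfine} the $\Z^2$-components are at most one-dimensional, so $a_n$ counts standard monomials of degree $n$ (Theorem~\ref{Tbase}, Corollary~\ref{Cchar2}); the length of a contributing monomial is pinned to $O(1)$ values by Lemma~\ref{Lestimativas}; and your observation that the two same-weight monomials of Lemma~\ref{Ldoismonomios} have degrees differing by exactly $1$ (from $\Gr(w_1)-\Gr(w_2)=(-1)^n(2,-1)$), so that at most one of them has degree $n$, is correct. But the proof stops exactly where the theorem begins. The entire content of the statement is the bound on the number of subset-sum representations $\sum_{i\in I}\deg(v_i)=\deg(v_k)-n$, and you dispose of it with ``the near-binary structure \ldots should make such representations almost unique'' plus ``I expect to close this gap by a joint induction.'' That hope is unjustified, and ``almost unique'' is exactly what fails: $\deg(v_0)=\deg(v_1)=1$ and the identity $\deg(v_0)+\deg(v_1)+\cdots+\deg(v_{2m})=\deg(v_{2m+1})$ produce systematic coincidences, which is precisely why $a_n$ genuinely attains the value $4$ in case (i) and why in characteristic $2$ the dimension pattern is eventually non-periodic (Corollary~\ref{Cdiamonds}); no soft uniqueness argument can yield the sharp constants. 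Your lower bound is likewise incomplete: the pivot degrees $(2^{k+1}+(-1)^k)/3$ are exponentially sparse, so ``together with their Grassmann-shifted relatives they cover every degree'' is again the counting problem, not a solution to it.

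What the paper does at this point is the missing idea. It orders $T_n^1$ (type-1 monomials of length $n$) by weight and joins consecutive points by a broken line whose steps, in multidegree coordinates, are $a=(-1,1)$ (degree unchanged) and $b=(1,0)$ (degree $+1$); it then proves by induction, using the bijection $T_{n+1}^1=\{1,x_0\}\cdot\tau(T_n^1)$ of Lemma~\ref{Lbijecao} and the fact that $\tau$ acts linearly on the plane (Lemma~\ref{Lend}), that this line is governed by the substitution rule $a\mapsto bb$, $b\mapsto ab$ with a concatenated prefix $b$, and that the pieces for different $n$ glue into one infinite broken line obeying the same rule. Since each step raises the degree by $0$ or $1$, every degree occurs (your lower bound) and occurs at most twice among type-1 monomials, which settles the characteristic-$2$ Lie-algebra case; the type-2 monomials are then handled as the shifted copies $T_{n+1}^2=T_n^1+(-1)^n(-2,1)$ of Lemma~\ref{Ldoismonomios}, where the sign $(-1)^n$ forces a case analysis near even and odd pivots showing the combined count stays in $\{2,3,4\}$, and the squares $x_{n-1}v_n$ give case (iii). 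Your closing sentence gestures at this induction via $\Gr(\tau w)=(2n_2,n_1+n_2)$, but you never formulate the inductive invariant --- the alternating $a/b$ pattern of the broken line --- and without that invariant the induction has nothing to carry. So the proposal is a correct setup with the central argument missing, not a proof.
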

\begin{figure}[h]
\caption{Notations are the same as on Fig.~\ref{Fig1}}\label{Fig2}
\begin{center}
\includegraphics[width=1.0\textwidth]{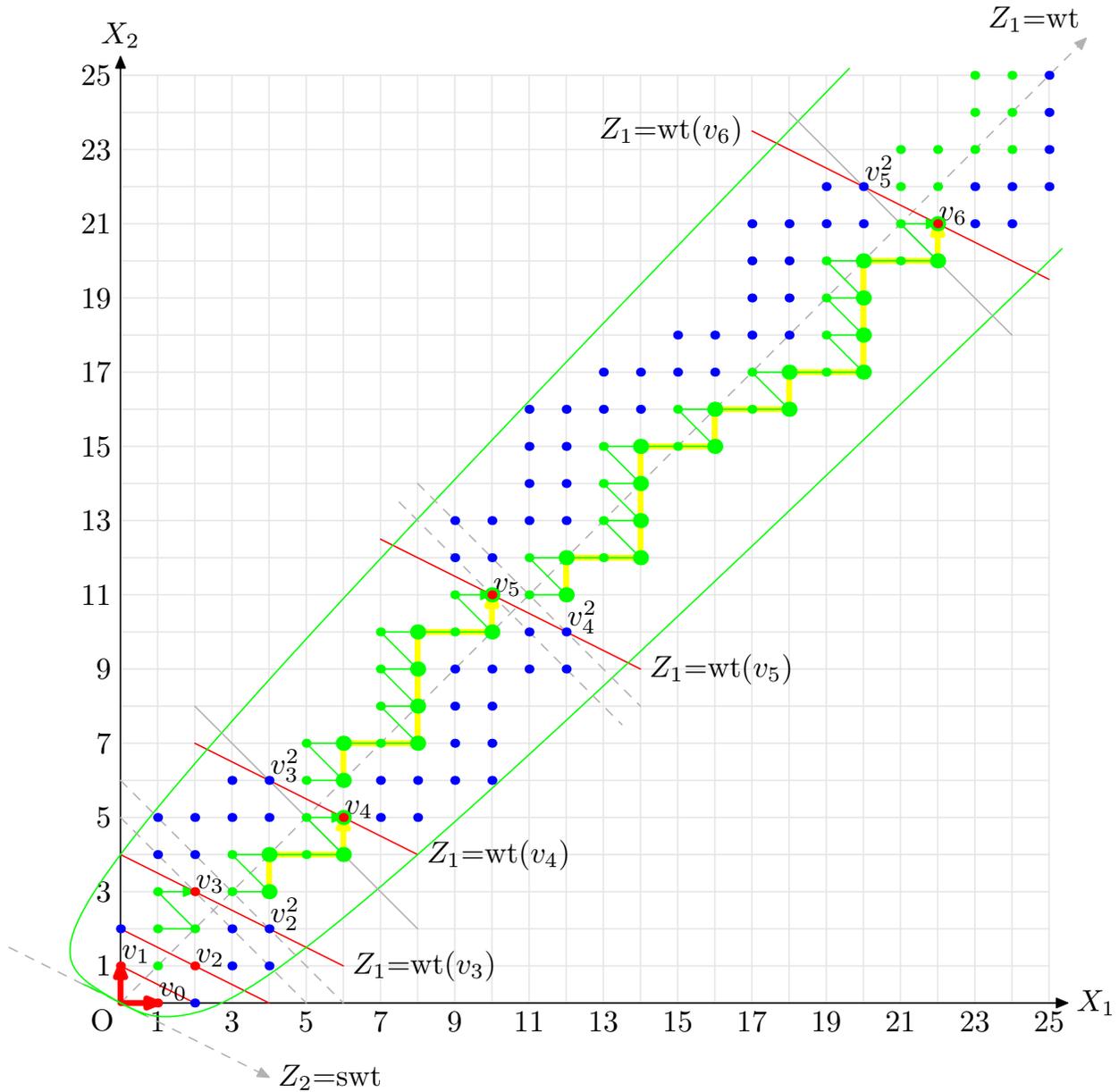}
\end{center}
\end{figure}
\begin{proof}
We start with the second claim.
By Corollary~\ref{Cchar2}, a basis of $\RR$ is given by the standard monomials of the first type.
On Fig.~\ref{Fig2} these monomials are depicted by green points except the pivot elements which are red.
By Lemma~\ref{Ldoismonomios},
for any $m\ge 1$ there is exactly one monomial of the first type and weight $m$.
By Lemma~\ref{Lestimativas}, all monomials  $w\in T_n^1$
are between two parallel red lines of fixed weight, namely,
$2^{n-1} <\wt(w)\le 2^n$, $n\ge 0$.
Thus, the sets $T_n^1$, $n\ge 0$, are in different regions of plane separated by these red lines.
We connect $T_n^1$, the set of the standard monomials of the first type of length $n$, where $n\ge 3$,
by a thin green broken line in order of their weights.
We prove and use the following geometric phenomena labeled by letters.

(A) {\it The broken line for $T_n^1$, $n\ge 3$, consists of alternating parts of two types,
namely, an oblique segment $a=(-1,1)$ is followed by either a horizontal segment $b=(1,0)$
or the three horizontal segments $b$.}
One checks that the steps $a$ and $b$, indeed, increase weights of vertices by one.
We prove (A) by induction on~$n$.
The green broken lines for $T_n^1$, $3\le n\le 6$, are shown on Fig.~\ref{Fig2}, they clearly satisfy (A).
The case $n=3$ is the base of the induction.
Arguing by induction, assume that the monomials of $T_n^1$, where $n\ge 3$ is fixed, satisfy (A).
By Lemma~\ref{Lbijecao}, we have the bijection $T_{n+1}^1=\{ 1,x_0\}\cdot\tau(T_n^1)$.
Since $\tau$ acts on plane as a liner transformation described in Lemma~\ref{Lend},
the application of $\tau$ produces a new broken line consisting of new alternating parts:
a horizontal long segment $\tilde a=\tau(a)=(2,0)$ followed by
either a vertical segment $\tilde b= \tau(b)=(0,1)$ or
the three vertical segments $\tilde b$.
These new broken lines are drawn on Fig.~\ref{Fig2} using thick yellows segments.
Now we take into account the action of two components of the factor $\{ 1,x_0\}$.
We conclude that $T_{n+1}^1$ consists of the vertices of the new broken yellow line, shown as thick points,
and their horizontal shifts by $\Gr(x_0)=(-1,0)$.
Now one really requires geometric observations, e.g.
drawing the process on a checkered paper or scrutinize it on Fig.~\ref{Fig2} to observe that
we get points belonging to a new broken green line as required, thus proving the inductive step.

(B) {\it A transformation of the broken line in the process above:
$T_n^1\rightsquigarrow T_{n+1}^1$, $n\ge 3$,
is described by a replacement rule
$\tilde{\tau}:$ $a\mapsto bb$, $b\mapsto ab$, concatenating a prefix $b$ to the result}.
One needs to repeat and refine the geometric observations above.
(The replacement rule is also formally valid for $n=1,2$).

(C){\it The broken lines for $T_n^1$, $n\ge 4$,
keep the same initial and final segments: $babbba\ldots abbbab$
(may be, with overlapping).} The cases $n=4,5,6$ are seen on Fig.~\ref{Fig2}. This property follows from (B).

(D) {\it We claim that $T_{n-1}^1$ and $T_n^1$, where $n\ge 1$,
are connected either by $b$ or $a$, for even and odd $n$, respectively.}
For $n=1$ the claim is trivial. Let $n\ge 2$.
The broken line for $T_{n-1}^1$ ends with $v_{n-1}$, while one for $T_n^1$ starts with
$w=x_0\cdots x_{n-2}v_n$, because this is the element in $T_n^1$  of smallest weight.
Assume that $n$ is even, by Lemma~\ref{Lprodutos}, $[v_0,v_{n-1}]=-x_0\cdots x_{n-2}v_n=-w$.
We get $\Wt(w)=\Wt(v_{n-1})+\Wt(v_0)=\Wt(v_{n-1})+(1,0)=\Wt(v_{n-1})+b$.
Assume that $n$ is odd, by Lemma~\ref{Lprodutos}, $[v_1,x_0v_{n-1}]=x_0\cdots x_{n-2}v_n=w$ and
$\Wt(w)=\Wt(v_{n-1})+\Wt(v_1)-\Wt(v_0)=\Wt(v_{n-1})+(-1,1)=\Wt(v_{n-1})+a$. Now (D) is proved.

(A$'$), (B$'$)
{\it Now we combine monomials $\cup_{n\ge 3}T_n^1$  in one infinite broken line.
The united broken line satisfies (A) and (B), respectively}.
By (C), we know the final part of $T_{n-1}^1$ and the initial part of $T_n^1$.
By (D), we insert between them either $b$ or $a$, thus yielding (A$'$).
By (D), consider a connecting segment $a$ (or $b$) between $T_{n-1}^1$ and $T_n^1$,
that goes to the next connecting segment between $T_{n}^1$ and $T_{n+1}^1$
which is $b$ (respectively, $a$) followed by the prefix $b$.
Thus, we obtain the same replacement rule for the letter of the connecting segment,
yielding (B$'$).

By drawing, one checks that by (A$'$) for any $k\ge 3$ the united broken line $\cup_{n\ge 3}T_n^1$
contains one or two vertices of degree $k$.
It remains to view a few points of $T_0^1\cup T_1^1\cup T_2^1$ on Fig.~\ref{Fig2} and see that
the second claim is proved.

(F) {\it The local pattern around $v_n$ moves to the local pattern around $v_{n+2}$, $n\ge 3$.}
This fact follows from (C) and (D).

Now we are ready to prove the first claim.
Let $T_n^2$ be the set
of monomials of the second type of length $n$, $n\ge 2$, which are drawn blue on Fig.~\ref{Fig2}.
By Lemma~\ref{Ldoismonomios}, monomials of the second type are obtained from ones of the first type
by shifts: $T_{n+1}^2=T_n^1+(-1)^{n}(-2,1)$, $n\ge 3$.
Using the first claim, each piece $T_{n+1}^2$, $n\ge 1$, contains one or two monomials of fixed degree.
A problem is that we cut the united green broken line into pieces
and apply different shifts $\pm(-2,1)$ to them, increasing or decreasing the degree.
Observe that (F) applies to monomials of two types as well.
Around the even pivot elements (like $v_4$, $v_6$), these shifts are in "opposite directions".
Around the odd pivot elements (like $v_3$, $v_5$),
this effect gives three blue points of fixed degree on two lines,
but the total number of monomials of fixed degree remains at most 4
(see the pairs of dashed grey lines on Fig.~\ref{Fig2}).
By (F), the same patterns are retained in neighborhoods
of the even and odd pivot elements.
Collecting green and blue points together we get $\{2,3,4\}$ monomials of fixed degree,
thus proving the first claim.

Consider the third claim.
By Corollary~\ref{Cchar2}, we take all monomials of the first type and
add squares of the pivot elements.
The squares $v_n^2=x_{n+1}v_{n+2}$, $n$ being odd,
yield totally three basis monomials on lines passing through them,
see the lines passing through $v_3^2$ and $v_5^2$ on Fig.~\ref{Fig2}.
\end{proof}

\begin{Corollary}\label{Cdiamonds}
Let $\ch K=2$, $\RR$ the Lie algebra generated by $\{v_0,v_1\}$ (i.e. without $p$-mapping),
and $a_n=\dim\RR_n=\dim \RR^n/\RR^{n+1}$, $n\ge 1$. Then
\begin{enumerate}
\item
The sequence $(a_n| n\ge 1)$, starting with $n\ge 5$, consists of alternating parts of two types:
a block $1,1$ followed either by 2 (a diamond) or by a block $2,2,2$ (a triplet of diamonds).
\item The sequence in not eventually periodic.
\end{enumerate}
\end{Corollary}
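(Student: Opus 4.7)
By Corollary~\ref{Cchar2}(i), in characteristic~$2$ the Lie algebra $\RR$ has a basis of standard monomials of the first type, so $a_n=\dim\RR_n$ counts such monomials of degree~$n$. My plan is to carry out the analysis on top of the combinatorial structure established in the proof of Theorem~\ref{Twidth}: these monomials are the vertices of the infinite united broken line whose parts are either $ab$ or $abbb$, produced by iterating the substitution $\tilde\tau\colon a\mapsto bb$, $b\mapsto ab$. Because $a=(-1,1)$ preserves $X_1+X_2$ while $b=(1,0)$ adds one to it, a part $ab$ contributes the dimension value $2$ to $(a_n)$ at a single degree, whereas $abbb$ contributes $1,1,2$ at three consecutive degrees.

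The crucial step is to pass to the higher-level encoding $ab\mapsto 0$, $abbb\mapsto 1$ of the parts. Under this encoding, $\tilde\tau$ descends to the substitution $\sigma\colon 0\mapsto 1$, $1\mapsto 100$, and the infinite word $W=W_1W_2W_3\cdots$ of parts of the united broken line satisfies the self-similarity relation $W=1\cdot\sigma(W)$, i.e.\ $\sigma(W)=W_2W_3\cdots$. The key combinatorial lemma I will prove is that in $\sigma(w)$ for any word $w$, two consecutive $1$s are separated by exactly $0$ or $2$ zeros; this follows directly from the four products $\sigma(00)=11$, $\sigma(01)=1100$, $\sigma(10)=1001$, $\sigma(11)=100100$ together with the observation that each of $\sigma(0)$ and $\sigma(1)$ carries a single $1$ at its initial position, so two consecutive $1$s in $\sigma(w)$ must arise from two consecutive letters of~$w$. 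It remains to check that the boundary corrections at the transitions $T_n\to T_{n+1}$ used in the proof of Theorem~\ref{Twidth} (insertion of a new $0$ at odd transitions, upgrade of the terminal $0$ to $1$ at even transitions) also preserve this gap condition, which is a short case analysis. Consequently $W$ partitions uniquely into blocks ``$1$'' and ``$1,0,0$'' contributing the dimension blocks ``$1,1,2$'' and ``$1,1,2,2,2$'' respectively; a direct computation of $a_1,\ldots,a_{10}$ pins down that this block decomposition of $(a_n)$ starts exactly at $n=5$ with first block ``$1,1,2$'', completing~(i).

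For~(ii) I argue by contradiction. Suppose $W$ is eventually periodic with period $p$ starting at some position~$N$; then the shifted word $\sigma(W)=W_2W_3\cdots$ is also eventually $p$-periodic. Because each $\sigma(x)$ has only one $1$ and it lies at the initial letter, the $1$s of $\sigma(W)$ sit exactly at the block-start positions $P_k=1+\sum_{i<k}|\sigma(W_i)|$. For $k\ge N$ the increments $P_{k+p}-P_k$ are all equal to $L=s+3\ell$, where $s,\ell\ge 0$ count zeros and ones in one period of $W$ and $s+\ell=p$; from $L=p+2\ell\ge p$, invariance of the $1$-positions of $\sigma(W)$ under a shift by $p$ forces $L=p$ and hence $\ell=0$. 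But then $W$ is eventually all zeros, while $\sigma(W)$ contains infinitely many $1$s coming from each $\sigma(0)=1$ in the periodic tail, contradicting $\sigma(W)=W_2W_3\cdots$ being eventually all zeros. The main obstacle, I expect, is the bookkeeping in the first half, namely precisely translating the broken-line picture of Theorem~\ref{Twidth} into the substitution $\sigma$ and verifying the ``$0$ or $2$'' gap property at every boundary correction; once that is secured, both the block decomposition for~(i) and the non-periodicity argument for~(ii) become essentially formal.
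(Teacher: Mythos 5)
For part (i) you are essentially following the paper's own route: the broken-line facts (A$'$) and (B$'$) from the proof of Theorem~\ref{Twidth} are exactly what you encode, and your substitution $\sigma\colon 0\mapsto 1$, $1\mapsto 100$ together with the gap lemma (consecutive $1$s separated by $0$ or $2$ zeros, because each $\sigma$-image carries exactly one $1$ in a fixed position) is a correct, and arguably cleaner, formalization of what the paper justifies by drawing; the boundary bookkeeping you defer corresponds to the paper's items (C) and (D), and I verified that your fixed-point relation $W=1\cdot\sigma(W)$ does reproduce the true parts word $1,1,0,0,1,0,0,1,1,1,0,0,\ldots$

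Part (ii), however, has a genuine gap, and it sits in the one step that carries the whole argument: ``invariance of the $1$-positions of $\sigma(W)$ under a shift by $p$ forces $L=p$.'' This does not follow. Eventual $p$-periodicity of $\sigma(W)$ gives a constant $c$ (the number of $1$s in any sufficiently late window of length $p$) such that $P_k+p=P_{k+c}$ for all large $k$; you are implicitly assuming $c=p$, i.e.\ conflating the position-period with the index-period. Combining $P_{k+c}=P_k+p$ with $P_{k+p}=P_k+L$ yields only $cL=p^2$. Adding the one further constraint you actually have, namely that $\sigma(W)$ is the one-letter shift of $W$ and hence has the same frequency of $1$s, gives $c=\ell$, whence $\ell(p+2\ell)=p^2$, whose unique positive root is $\ell=p/2$. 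So all of your counting constraints are simultaneously satisfiable by a balanced period ($s=\ell=p/2$, $L=2p$, $c=p/2$): no contradiction materializes and $\ell=0$ is not forced. The failure is structural, not cosmetic --- frequency counting dead-ends exactly at the balanced case $s=\ell$. The paper eliminates that case with geometry rather than counting: in your encoding, a part $ab$ changes the superweight by $-1$ and a part $abbb$ by $+1$, so an eventually periodic parts word would either (if $s\ne\ell$) drive $\swt$ monotonically to infinity, contradicting the fact that the line passes through the infinitely many pivots $v_n$ with $\swt(v_n)=\pm1$, or (if $s=\ell$) confine all monomials of the first type to a strip $|\swt(w)|\le C$, contradicting the unboundedness $\swt(x_0x_2\cdots x_{2k-2}v_{2k})=1-k$. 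Grafting that superweight argument onto your substitution framework would close part (ii); as written, your proof of (ii) is not valid.
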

\begin{proof}
By (A$'$), $\cup_{n\ge 3}T_n^1$ is drawn using the alternating parts:
$a$ followed by either $b$ or $b^3$.
Using the replacement rule (B$'$),
$\cup_{n\ge 4}T_n^1$ consists of the alternating parts:
$bb$ followed by either $ab$ or $(ab)^3$.
By drawing the respective broken line one observes that
the last parts yield one or three diamonds, while the first part yields components $1,1$.
The initial coefficients are seen on Fig.~\ref{Fig2}, see also below.

The first part explains the origin of diamonds. The eventual periodicity
of the degree components would mean eventual periodicity of the sequence of letters $a,b$.
The sequence of monomials
contains infinitely many $v_n$, $n\ge 0$, close to the diagonal because $\swt(v_n)=\pm 1$.
An eventual periodicity would mean that
the monomials of the first type $w$ lie in a strip $|\swt(w)|\le C$.
This is not the case because
values $\swt(x_0x_2\cdots x_{2k-2}v_{2k})=1-k$, $k\ge 2$, are not bounded.
\end{proof}

\begin{Remark}
We obtain an interesting eventually non-periodic sequence
$(a_n| n\ge 1)$, which was not found in the database~\cite{Sloane}.
Below, a triplet of diamonds $2,2,2$ denote as $2^3$:
$$
2122112112^3112^3112112112^3112112112^3112^3112^3112112112^3112^3112^3112112112^3112112112^3112112\ldots\\
$$
\end{Remark}
\begin{Corollary}\label{Cnotthin}
Let $\ch K=2$ and $\RR$ the Lie algebra generated by $\{v_0,v_1\}$.
Then $\RR$ is of width 2 but it is not thin.
\end{Corollary}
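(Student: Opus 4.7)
The width~2 claim is immediate from Theorem~\ref{Twidth}(ii), so the task reduces to falsifying the covering property: produce $n\ge 1$ and a nonzero $z\in\RR_n$ with $[\RR_1,z]\subsetneq\RR_{n+1}$. My plan is to look for the smallest place where a diamond is followed by a diamond, since that is where covering is most fragile; by Corollary~\ref{Cdiamonds} (or directly by Theorem~\ref{Twidth}(ii)) this already occurs at $n=3$, $n+1=4$, with $\dim\RR_3=\dim\RR_4=2$.

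The distinguished element will be the pivot $z=v_2\in\RR_3$. I would first certify $v_2\in\RR_3$ via Lemma~\ref{Lbasicas}(iv), which writes $v_2=-[v_0,[v_0,v_1]]$ as a length-$3$ Lie monomial in the generators (this outer identity still holds in the plain Lie algebra, as it is an operator equality in $\End\Lambda$). The essential characteristic-two collapse then comes from Lemma~\ref{Lprodutos}: taking $(i,k)=(0,2)$ (even case) gives $[v_0,v_2]=2\,x_0x_1x_3v_4$, which vanishes in $\ch K=2$; taking $(i,k)=(1,1)$ (odd case) gives $[v_1,v_2]=-x_1v_3=x_1v_3$, which is nonzero. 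Hence $[\RR_1,v_2]=\langle x_1v_3\rangle_K$ is one-dimensional.

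Since $[\RR_1,v_2]$ is then a proper subspace of the two-dimensional $\RR_4$, the covering property fails at $z=v_2$, and $\RR$ is therefore not thin. The real content of the argument is the characteristic-two vanishing $[v_0,v_2]=0$, and everything else is bookkeeping. The only mildly delicate step is confirming that $\dim\RR_4=2$; this is handed to us by Theorem~\ref{Twidth}(ii), and if one wished to avoid that reference it can also be checked directly by producing $x_0v_3=[v_1,[v_1,[v_0,v_1]]]$ as a second element of $\RR_4$, linearly independent from $x_1v_3$ (different Grassmann tails).
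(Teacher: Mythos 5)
Your proof is correct, but it takes a genuinely different route from the paper. The paper works high up in the grading: it takes a nonzero $z\in\RR_{8,7}\subset\RR_{15}$ and argues purely from the occupancy of $\Z^2$-components (read off Fig.~\ref{Fig2}) that $[v_0,z]\in\RR_{9,7}=0$ and $[v_1,z]\in\RR_{8,8}$, so $[\RR_1,z]\subset\RR_{8,8}\subsetneqq\RR_{7,9}\oplus\RR_{8,8}=\RR_{16}$; no bracket is ever computed, but one must trust the figure/computation for $\dim\RR_{15}=\dim\RR_{16}=2$. You instead exhibit the failure at the smallest possible spot, $z=v_2\in\RR_3$, and verify everything by hand from Lemmas~\ref{Lbasicas} and~\ref{Lprodutos}: $[v_0,v_2]=2x_0x_1x_3v_4=0$ in characteristic $2$, while $[v_1,v_2]=x_1v_3\ne 0$, so $[\RR_1,v_2]=\langle x_1v_3\rangle_K$ is one-dimensional. (Structurally this is the same phenomenon the paper exploits -- $[v_0,v_2]$ lands in the empty component $\RR_{3,1}$ -- but your verification is explicit and independent of the figure.) One small imprecision: neither Theorem~\ref{Twidth}(ii) nor Corollary~\ref{Cdiamonds} "hands you" $\dim\RR_4=2$; the theorem gives only the upper bound $\dim\RR_n\le 2$, and the corollary describes the sequence only for $n\ge 5$. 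Your fallback closes this gap correctly: $x_1v_3=[v_1,v_2]$ and $x_0v_3=[v_1,[v_1,[v_0,v_1]]]$ are distinct standard monomials of the first type, hence linearly independent elements of $\RR_4$ (Corollary~\ref{Cchar2}), which together with the width bound gives $\dim\RR_4=2$ exactly. So the fallback computation should be treated as part of the proof, not an optional remark; with it included, your argument is complete and arguably more self-contained than the paper's.
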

\begin{proof}
By Picture~2,
$\dim \RR_{15}=\dim\RR_{16}=2$. Let $0\ne  z\in\RR_{8,7}\subset\RR_{15}$.
The covering property (Subsection~\ref{SSnarrow}) is not satisfied because
$[\RR_1, z]=\langle [v_0,z]+[v_1,z]\rangle_K\subset\RR_{9,7}+\RR_{8,8}=\RR_{8,8}
  \subsetneqq \RR_{7,9}\oplus \RR_{8,8}=\RR_{16}$.
\end{proof}
\begin{Remark}
Since $\RR$ above is not thin, the respective diamonds should be called {\em fake diamonds}.
\end{Remark}
\section{Lie superalgebra $\RR=\RR_{\bar 0}\oplus \RR_{\bar 1}$ is nil graded, non-nillity of $\AA$}

In this section we establish that the Lie superalgebra $\RR$ enjoys a property analogous to
the periodicity of the Grigorchuk and Gupta-Sidki groups.
Namely, we prove that $\RR=\RR_{\bar 0}\oplus \RR_{\bar 1}$ is  ad-nil $\Z_2$-graded (Theorem~\ref{Tadnilpotente}).
Recall that $\Z^2$-components of $\RR$ are at most one-dimensional (Theorem~\ref{Tfine}).
In particular, this implies that
an extension of Martinez-Zelmanov Theorem~\ref{TMarZel} for the Lie superalgebra
case in characteristic zero is not possible (Corollary~\ref{adnilpotentesmultigrau}).
We also prove that the components $\RR_\pm$ and $\AA_\pm$
of the triangular decompositions of Corollary~\ref{Ctriang} are locally nilpotent (Theorem~\ref{Tlocal}).

On the other hand, in case $\ch K=0$ and $\ch K=2$, we show that
the associative algebra $\AA=\Alg(v_0,v_1)$ is not nil,
the restricted Lie algebra $\RR=\Lie_p(v_0,v_1)$, $\ch K=2$,  is also not nil (Lemma~\ref{Lnaonil}).

\begin{Theorem}\label{Tadnilpotente}
Let $\RR=\Lie(v_0,v_1)=\RR_{\bar{0}}\oplus\RR_{\bar{1}}$. For any $a\in\RR_{\bar{n}}$,
$\bar{n}\in\lbrace\bar{0},\bar{1}\rbrace$, the operator $\ad(a)$ is nilpotent.
\end{Theorem}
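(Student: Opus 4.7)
The plan is to exploit the two structural properties already established: the $\Z^2$-grading by multidegree in $\{v_0,v_1\}$ is fine (Theorem~\ref{Tfine}) and standard monomials are confined to a logarithmic region in the weight-superweight plane (Theorem~\ref{TcurvesL}). Given $a\in\RR_{\bar n}$, I first decompose $a=a_1+\cdots+a_k$ into finitely many $\Z^2$-homogeneous summands of common parity~$\bar n$. By fineness each nonzero $a_i$ is a scalar multiple of a standard monomial, hence has a determined weight vector $\Wt(a_i)=(\wt(a_i),\swt(a_i))$. It suffices to verify, for every basis monomial $b$, that some power $(\ad a)^N(b)$ vanishes, which is the meaning of ad-nilpotency in the analogue-of-periodicity sense used in the paper.

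Next, consider a single $\Z^2$-homogeneous $a$. By fineness, $(\ad a)^N(b)$ lies in the at-most-one-dimensional component $\RR_{\Gr(b)+N\Gr(a)}$ with weight coordinates $\Wt(b)+N\Wt(a)$. If $\swt(a)\ne 0$, Theorem~\ref{TcurvesL} demands
\[
\bigl|\swt(b)+N\swt(a)\bigr|<\tfrac{1}{2}\log_2\!\bigl(\wt(b)+N\wt(a)\bigr)+\tfrac{5}{2}
\]
for a nonzero image. The left side grows linearly in $N$ while the right grows only logarithmically, so the inequality fails beyond an explicit threshold, forcing $(\ad a)^N(b)=0$. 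If $\swt(a)=0$, i.e. $\Gr(a)=(n,n)$, the trajectory keeps $\swt$ equal to $\swt(b)$ while the target weight grows linearly in $N$; by Lemma~\ref{Ldoismonomios}, each target weight admits at most two standard monomials whose two superweights differ by~$3$ and depend arithmetically on the choice of tail, so for all but finitely many $N$ neither matches $\swt(b)$ and the image vanishes.

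For a general $a=\sum_i a_i$, expand
\[
(\ad a)^N(b) = \sum_{(i_1,\ldots,i_N)}\ad a_{i_1}\,\ad a_{i_2}\cdots \ad a_{i_N}(b)
\]
and group contributions by their terminal $\Z^2$-multidegree $\Gr(b)+\sum_j\Gr(a_{i_j})$. Each group contributes to a single at-most-one-dimensional $\RR$-component, and hence is zero whenever that component lies outside the logarithmic region. For $N$ large, one argues that every reachable target multidegree is outside this region by the same escape estimate applied to the cumulative shift vector. The hardest step is the mixed-sign case: when the $\swt(a_i)$ carry both signs the cumulative superweight $\sum_j\swt(a_{i_j})$ can stay bounded along certain trajectories, so the superweight escape is not automatic. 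There I would exploit the strict linear growth of the cumulative weight $\sum_j\wt(a_{i_j})\ge N\min_i\wt(a_i)$ together with the sparsity of at most two standard monomials per weight strip (Lemma~\ref{Ldoismonomios}), counting admissible compositions in order to conclude that for $N$ large every target multidegree is either outside the logarithmic region or carries no standard monomial of the required superweight, so $(\ad a)^N(b)=0$.
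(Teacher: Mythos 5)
Your escape-to-infinity argument works for a single $\Z^2$-homogeneous $a$ with $\swt(a)\neq 0$ (this is essentially how the paper handles the components $\RR_\pm$ in Theorem~\ref{Tlocal}), but the cases you yourself flag as delicate --- $\swt(a)=0$ and the mixed-sign case --- contain a genuine gap, and in fact the key claim there is false. Take $a=x_0v_2$, a standard monomial of the first type, which is even, with $\wt(a)=3$, $\swt(a)=0$, $\Gr(a)=(1,1)$, and take $b=v_1$, so $\Gr(b)=(0,1)$ and $\swt(b)=-1$. Then $\Gr(b)+N\Gr(a)=(N,N+1)$, and for every odd $n$ the choice $N=(2^n-2)/3$ (an integer, since $2^n\equiv 2 \pmod 3$ for odd $n$) gives exactly $\Gr(v_n)=\frac13\left(2^n-2,\,2^n+1\right)$. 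So the target component $\RR_{\Gr(b)+N\Gr(a)}$ contains the pivot element $v_n$ and is nonzero for infinitely many $N$: the weights $2^n$, $n$ odd, hit the arithmetic progression $\wt(b)+3N$ infinitely often, and $\swt(v_n)=-1=\swt(b)$. This refutes your Lemma~\ref{Ldoismonomios}-based sparsity claim that ``for all but finitely many $N$ neither matches $\swt(b)$,'' and it shows that no argument based only on fineness (Theorem~\ref{Tfine}) and the logarithmic region (Theorem~\ref{TcurvesL}) can prove $(\ad a)^N(b)=0$ in the diagonal case: the grading permits a nonzero value, so vanishing, if true, must come from the multiplication itself. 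The same obstruction kills the mixed-sign case, where the cumulative superweight can stay bounded while the occupied region is never left.

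The paper's proof is of a different, algebra-specific nature, and this is exactly the ingredient you are missing. It first reduces odd $a$ to even via $(\ad a)^2=\ad(a^2)$, then uses the structural fact that every \emph{even} standard monomial contains at least one Grassmann letter. For a finite set $V$ of such monomials of length at most $N$, it proves that the associative algebra $\Alg(V)\subset\End\Lambda$ is nilpotent: in any nonzero product of $M$ factors one moves the Grassmann letters to the left, where each letter either cancels against a head or is replaced by smaller letters; a nonzero reordered monomial carries at most $N$ Grassmann letters and at most as many heads, hence has weight at most $N\cdot 2^N$, while additivity of the weight gives $\wt\ge M$, forcing $M\le N\cdot 2^N$. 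Consequently $a^{N_1}=0$ in $\End\Lambda$ with $N_1=N\cdot 2^N+1$, and expanding $(\ad a)^{2N_1-1}=(l_a-r_a)^{2N_1-1}$ (left minus right multiplication) kills every term $a^iba^j$. To close your diagonal and mixed-sign cases you would need precisely this kind of multiplication-level vanishing (Grassmann letters getting used up in long products); the grading geometry alone cannot supply it.
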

\begin{proof}
First, assume that $a\in\RR_{\bar{1}}$.
We have $(\ad a)^2=\ad(a^2)$, so it is sufficient to establish nilpotence of $\ad(a^2)$,
where $a^2=\frac{1}{2}[a,a]\in\RR_{\bar{0}}$
(in case $\ch K=2$, we use the formal square mapping $\RR_{\bar 1}\ni a\mapsto a^2\in\RR_{\bar{0}}$).
Thus, the proof is reduced to the case of an even element $a\in\RR_{\bar{0}}$.
Observe that even standard monomials contain at least one Grassmann variable,
because the pivot elements $v_n$ are odd.

We prove a more general statement. Fix a number $N>0$.
Let $V$ be a set of standard monomials of length at most $N$, each containing at least one Grassmann letter.
We are going to prove that the associative algebra $\Alg(V)\subset\End\Lambda$ is nilpotent.

Let $w$ be a nonzero product of $M$ elements $w_i\in V$.
We get $\wt(w)\geq M$, because $\wt(w_i)\geq 1$.
We transform the product as follows.
We move to the left and order Grassmann variables, while not changing an order of the heads.
Observe that a Grassmann variable $x_j$ can either disappear by commuting
with an appropriated head $v_k$, $k\leq j$,
or be substituted by a product of smaller Grassmann letters.
The following examples show the commutation patterns:
\begin{equation}\label{transformacoes}
\begin{split}
x_2v_5\cdot x_5v_7 & =  -x_2x_5\cdot v_5v_7+x_2\cdot v_7;\\
x_3v_4\cdot x_5v_7 & =  -x_3x_5\cdot v_4v_7;\\
x_2v_3\cdot x_5v_7 & =  -x_2x_5\cdot v_3v_7+x_2x_3x_4\cdot v_7.
\end{split}
\end{equation}

As a result, the product can be written as a linear combination of monomials of the form
$$
u=x_{i_1}\cdots x_{i_{n}}\cdot v_{k_1}\cdots v_{k_{m}},
\qquad i_1<\cdots< i_n<N,\quad k_l\leq N.
$$

By notations above, consider that $n$ is the number of Grassmann variables and
$m$ the number of heads in the product above.
Since $x_{i_l}\in\lbrace x_0,\ldots,x_{N-1}\rbrace$ and the product $u$ is nonzero, we have $n\le N$.
On the other hand, the number of Grassmann variables in the original product $w$
is greater or equal to the number of the heads,
and this property is kept by all transformations~\eqref{transformacoes},
hence $m\leq n\leq N$. We evaluate
weight of the resulting monomial $u$:
$$M\leq\wt(w)=\wt(u)\leq m\cdot\wt(v_N)= m\cdot 2^N \leq N\cdot2^N.$$
Therefore, $\Alg(V)^{N_1}=0$, where $N_1=N\cdot2^N+1$.

Now, consider any even element $a\in\RR_{\bar{0}}$.
Then $a$ is a finite linear combination of even standard monomials,
which contain at least one Grassmann variable, let $N$ be the maximum of their lengths.
By the arguments above, $a^{N_1}=0$.
Set $k=2N_1-1$.
Let $l_a,r_a$ be operators of left and right multiplications by $a$ in $\AA$, respectively.
For any $b\in\RR$, we have
\begin{equation*}
(\ad a)^{k}(b)  =(l_a-r_a)^{k}(b)=\sum_{i+j=k}
\binom{k}i (-1)^jl_a^ir_a^j(b)
=  \sum_{i+j=k} \binom{k}i (-1)^ja^iba^j=0.
\qedhere
\end{equation*}
\end{proof}
\begin{Remark}
We do not know whether $\ad a$  is nilpotent where $a\in\RR$ is a non-$\Z_2$-homogeneous element.
\end{Remark}

To compare with the case of Lie algebras (Theorem~\ref{TMarZel}) we need a weaker version on the $\Z^2$-gradation.
\begin{Corollary}\label{adnilpotentesmultigrau}
Consider the multidegree $\mathbb{Z}^2$-gradation $\RR=\mathop{\oplus}\limits_{n_1,n_2\geq 0}\RR_{n_1 n_2}$
(Lemma~\ref{Lz2graduacao}).
For any $a\in\RR_{n_1n_2}$, $n_1,n_2\geq 0$, the operator $\ad(a)$ is nilpotent.
\end{Corollary}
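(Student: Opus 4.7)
The plan is to observe that the $\Z^2$-gradation by multidegree in $\{v_0,v_1\}$ refines the $\Z_2$-grading $\RR=\RR_{\bar 0}\oplus \RR_{\bar 1}$ as a Lie superalgebra, so that the corollary follows immediately from Theorem~\ref{Tadnilpotente}.

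First I would note that both generators $v_0$ and $v_1$ are odd elements of the ambient Lie superalgebra $\WW(\Lambda)$: indeed, each $v_i$ is a sum of the odd superderivation $\dd_i$ with products of an even number of odd Grassmann letters and one odd superderivation, so the whole expression~\eqref{pivot} lies in $\WW(\Lambda)_{\bar 1}$. Since the supercommutator and the formal square behave additively on $\Z_2$-degree in the standard way, any Lie monomial containing $n_1$ factors $v_0$ and $n_2$ factors $v_1$ has $\Z_2$-degree equal to $\overline{n_1+n_2}$. Hence, the inclusion
\[
\RR_{n_1 n_2}\subseteq \RR_{\overline{n_1+n_2}},\qquad n_1,n_2\ge 0,
\]
holds, which means the $\Z^2$-grading by multidegree is a refinement of the $\Z_2$-superalgebra grading.

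Consequently, every $a\in\RR_{n_1 n_2}$ is automatically $\Z_2$-homogeneous — even or odd according to the parity of $n_1+n_2$ — and Theorem~\ref{Tadnilpotente} applies directly to give nilpotency of $\ad(a)$. The only step to check is the parity claim above, which presents no real obstacle, as it is immediate from the explicit form~\eqref{pivot} of the generators and the definition of the $\Z_2$-grading on $\WW(\Lambda)$.
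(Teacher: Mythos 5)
Your proposal is correct and follows exactly the paper's own argument: the paper's proof is the single observation that $\RR_{n_1,n_2}\subset \RR_{\overline{n_1+n_2}}$ for all $n_1,n_2\geq 0$, after which Theorem~\ref{Tadnilpotente} applies. Your additional verification that $v_0,v_1$ are odd in $\WW(\Lambda)$ (so that multidegree $(n_1,n_2)$ forces $\Z_2$-degree $\overline{n_1+n_2}$) is a sound and welcome elaboration of the same step.
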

\begin{proof}
One has $\RR_{n_1,n_2}\subset \RR_{\overline{n_1+n_2}}$ for all $n_1,n_2\geq 0$.
\end{proof}

\begin{Theorem}\label{Tlocal}
Consider the triangular decompositions of Corollary~\ref{Ctriang}.
\begin{enumerate}
\item in $\AA=\AA_+\oplus\AA_0\oplus\AA_-$, the components $\AA_+$, $\AA_-$ are locally nilpotent;
\item in $\RR=\RR_+\oplus\RR_0\oplus\RR_-$, all three components are locally nilpotent.
\end{enumerate}
\end{Theorem}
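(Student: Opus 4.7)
The plan is to treat the $\pm$ components and the zero component by different mechanisms: for $\AA_\pm$ and $\RR_\pm$ I would exploit the logarithmic curve bounds between weight and superweight (Theorem~\ref{TcurvesA} and Theorem~\ref{TcurvesL}), while for $\RR_0$, where the superweight vanishes identically, I would reduce to the even part $\RR_{\bar 0}$ and reuse the Grassmann-variable bound from the proof of Theorem~\ref{Tadnilpotente}.

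First I would handle $\AA_+$; the case $\AA_-$ is symmetric. Pick finitely many generators $a_1,\dots,a_k\in\AA_+$ and decompose each $a_i$ into its multidegree-homogeneous summands. Since $\AA_+$ is $\Z^2$-graded and cut out by the superweight inequality $\swt>0$, each summand still lies in $\AA_+$, so I may assume the generators are homogeneous with maximum weight $W$ and with $\swt(a_i)\ge 1$ for all $i$. By additivity of $\wt$ and $\swt$ on associative products (Lemma~\ref{aditividade}), any nonzero product of $M$ generators sits in a component $\AA_{n_1n_2}\ne 0$ with $\wt=n_1+2n_2\le MW$ and $\swt=n_1-n_2\ge M$. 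Since that component is nonzero, Theorem~\ref{TcurvesA} applied to any basis monomial therein yields $M\le\swt<\log_2\wt+3\le\log_2(MW)+3$, which fails once $M$ is large enough. Hence the associative subalgebra generated by $\{a_1,\dots,a_k\}$ is nilpotent. The argument for $\RR_+$ and $\RR_-$ is identical, replacing Theorem~\ref{TcurvesA} by Theorem~\ref{TcurvesL} (which gives $|\swt|\le\frac{1}{2}\log_2\wt+\frac{5}{2}$ wherever $\RR_{n_1n_2}\ne 0$) and reading ``associative product'' as ``iterated Lie bracket''; the $\Z^2$-grading of $\RR$ and additivity of weights force the same contradiction on the number of factors.

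The remaining case $\RR_0=\bigoplus_{n\ge 0}\RR_{nn}$ is the only subtle one, since the superweight vanishes on the diagonal and the above estimate collapses. However, every element of $\RR_{nn}$ has $\Z_2$-degree $2n=\bar 0$, so $\RR_0\subset\RR_{\bar 0}$. Because the heads $v_n$ are odd in $\WW(\Lambda)$, every even standard monomial contains at least one Grassmann letter (an odd number of them in a first-type monomial, and the mandatory $x_{n-1}$ in a second-type monomial). Consequently the key estimate from the proof of Theorem~\ref{Tadnilpotente} applies verbatim: given finitely many homogeneous elements of $\RR_{\bar 0}$, the finite set $V$ of standard monomials appearing in them consists of elements of length at most some $N$, each containing a Grassmann letter, and $\Alg(V)^{N\cdot 2^N+1}=0$. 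Thus any finitely generated Lie subalgebra of $\RR_{\bar 0}$ sits inside a nilpotent associative algebra and is in particular nilpotent; restricting to $\RR_0$ yields the claim. The main obstacle is precisely this $\RR_0$ step: the weight/superweight dichotomy alone does not prevent accumulation on the diagonal, so one has to fall back on the Grassmann-letter mechanism already exploited in Theorem~\ref{Tadnilpotente}.
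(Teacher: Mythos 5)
Your proposal is correct and follows essentially the same route as the paper's proof: for $\AA_\pm$ and $\RR_\pm$ the paper also invokes the logarithmic-curve bounds of Theorems~\ref{TcurvesL} and~\ref{TcurvesA} against additivity of the superweight (it merely outsources this geometric argument to \cite[Corollary~5.2]{PeSh09}, which you spell out explicitly), and for $\RR_0$ the paper likewise reduces to the Grassmann-letter nilpotence estimate from the proof of Theorem~\ref{Tadnilpotente}. The only cosmetic difference is how the Grassmann letter is guaranteed: the paper notes that monomials spanning $\RR_0$ are non-pivot since their superweight is $0\ne\pm1$, while you note that they are even; both observations yield the same conclusion.
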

\begin{proof}
The homogeneous components of $\RR$ and $\AA$ are in regions of plane bounded by pairs of logarithmic curves
(Theorem~\ref{TcurvesL} and Theorem~\ref{TcurvesA}, see also Fig.~\ref{Fig1}).
In case of the positive and negative components,
the result easily follows by the geometric arguments of~\cite[Corollary~5.2]{PeSh09}.

Consider the subalgebra $\RR_0$, it is spanned by non-pivot standard monomials,
i.e. they contain Grassmann variables. Now the arguments of Theorem~\ref{Tadnilpotente}
yield that $\Alg(\RR_0)$ is locally nilpotent.
\end{proof}

\begin{Lemma}\label{Lnaonil}
Let $\ch K=0$ or $\ch K=2$. Then
\begin{enumerate}
\item The associative algebra $\AA=\Alg(v_0,v_1)$ is not nil.
\item Let $\ch K=2$. The restricted Lie algebra $\RR=\Lie_p(v_0,v_1)$ is not nil.
\end{enumerate}
\end{Lemma}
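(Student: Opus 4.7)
The plan is, in each case, to exhibit a single element whose associative powers (respectively $[p]$-powers) do not vanish as operators on~$\Lambda$. Since $\AA \subset \End(\Lambda)$, non-nilpotence of a candidate $a$ can be verified by producing, for each $n\ge 1$, a Grassmann monomial $\omega_n\in\Lambda$ with $a^n(\omega_n)\ne 0$.

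A first structural reduction is that the candidate must have superweight zero. If $\swt(a)\ne 0$, then $\swt(a^n) = n\,\swt(a)$ grows linearly, whereas the logarithmic envelope of Theorem~\ref{TcurvesA} forces every nonzero basis monomial of $\AA$ to satisfy $|\swt|<\log_2\wt+3$. Hence $a^n=0$ for $n$ large enough, and the non-nil element must lie in the diagonal $\AA_0$ of the triangular decomposition (Corollary~\ref{Ctriang}).

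The candidate element will be chosen so that its action on~$\Lambda$ implements a ``shift'' driven by the relation $v_n^2=x_{n+1}v_{n+2}$ of Lemma~\ref{Lbasicas}. Concretely, for each $n$ we select $\omega_n$ to be a Grassmann monomial of degree growing with~$n$ (for instance a product $x_0x_1x_2\cdots x_{k_n}$ with $k_n\to\infty$), so that repeated applications of~$a$ do not collapse to zero through the ubiquitous Grassmann relations $x_i^2=0$ or the consequences $v_n^4=(x_{n+1}v_{n+2})^2=0$. The non-vanishing will be established by induction on~$n$, paralleling the self-similarity encoded by the shift endomorphism $\tau$ of Lemma~\ref{Lpivot}: at each step one identifies in $a^n(\omega_n)$ a particular unambiguous Grassmann monomial that survives the next application of~$a$.

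The main obstacle — and where the peculiarity of $\ch K\in\{0,2\}$ enters — is the identification of the correct element~$a$. Many natural candidates (single pivots, the products $v_0v_1$, sums $v_0+v_1$, etc.) turn out, by direct computation, to be nilpotent after very few iterations, because cancellations force trajectories on the one-dimensional weight components of $\Lambda$ to hit zeros. The characteristics $\ch K=0$ and $\ch K=2$ are distinguished by specific simplifications of the formula $[v_i,v_{i+2l}] = 2\,x_ix_{i+1}\cdots x_{i+2l-1}x_{i+2l+1}v_{i+2l+2}$ of Lemma~\ref{Lprodutos}: this bracket vanishes identically in characteristic~$2$ (eliminating interference between even pivots), while in characteristic~$0$ it contributes with the tractable scalar~$2$. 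Exploiting these vanishings, one constructs a ``self-similar'' element whose iterates shift through higher copies of~$\RR$ under~$\tau$, so that the recursion $a^{n+1}=a\cdot a^n$ is essentially the recursion between $\RR$ and $\tau(\RR)$ — hence non-vanishing propagates inductively.

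For part (ii) with $\ch K=2$, the restricted Lie algebra $\Lie_p(v_0,v_1)$ sits inside $\AA$, and the $[2^n]$-mapping coincides with the operator $2^n$-th power. The candidate $a\in\Lie_p(v_0,v_1)$ must therefore be chosen from the explicit basis of Corollary~\ref{Cchar2} (standard monomials of the first type together with squares of pivots), again in the diagonal part. The same self-similar recursion, adapted to the restricted setting, then yields $a^{[2^n]}\ne 0$ for all~$n$, which simultaneously reproves (i) in characteristic~$2$ and establishes (ii).
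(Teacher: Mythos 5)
Your proposal has a genuine gap, and its one concrete structural claim is a wrong turn that would sink the argument. The reduction to the diagonal is incorrect: the estimate $\swt(a^n)=n\,\swt(a)$ against the logarithmic envelope of Theorem~\ref{TcurvesA} applies only to elements that are \emph{homogeneous} in superweight, so it in no way forces a non-nil element to lie in $\AA_0$ --- a sum of components of different superweights lies in none of $\AA_+$, $\AA_0$, $\AA_-$. Worse, for part (ii) the diagonal is exactly where no witness can exist: every standard monomial of superweight zero contains a Grassmann letter (pivots have $\swt(v_n)=(-1)^n\ne 0$), so by the argument of Theorem~\ref{Tadnilpotente}, used in Theorem~\ref{Tlocal}, the algebra $\Alg(\RR_0)$ is locally nilpotent, and hence every element of $\RR_0$ --- in particular every diagonal combination of basis elements from Corollary~\ref{Cchar2} --- is nilpotent as an operator. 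The paper's witness is the \emph{non-homogeneous} mixture $v=v_n+\alpha x_nv_{n+1}$, $0\ne\alpha\in K$, whose two terms have superweights $(-1)^n$ and $-2(-1)^n$; in characteristic $2$ one computes $v^2=v_n^2+[v_n,\alpha x_nv_{n+1}]=x_{n+1}v_{n+2}+\alpha v_{n+1}=\alpha\bigl(v_{n+1}+\alpha^{-1}x_{n+1}v_{n+2}\bigr)$, so squaring reproduces an element of the same shape with $n$ shifted by one, whence $v^{2^k}\ne 0$ for all $k$. This explicit element is the entire content of the proof, and it is precisely what your proposal defers to ``one constructs a self-similar element'' and never produces.

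The characteristic-zero case also cannot be handled by the direct recursion you sketch. In characteristic $0$ the cross term of $(v_n+\alpha x_nv_{n+1})^2$ is the \emph{anticommutator} $v_n(x_nv_{n+1})+(x_nv_{n+1})v_n=v_{n+1}+2x_nv_{n+1}\dd_n$, not the supercommutator, and the extra operator $2x_nv_{n+1}\dd_n$ is not an element of $\RR$ and destroys the self-similar shape of the recursion; your appeal to the ``tractable scalar $2$'' does not address this. The paper instead reduces characteristic $0$ to characteristic $2$: by Corollary~\ref{C_Lie_subring} and Corollary~\ref{Cchar2} there are $\Z$-forms $\Lie_\Z(v_0,v_1)$ and $\Alg_\Z(v_0,v_1)\subset\AA$, and reduction modulo $2$ gives an epimorphism onto $\Alg_{\F_2}(v_0,v_1)$, which is not nil by the characteristic-$2$ case; if $\AA$ were nil, the image of every element of the $\Z$-form would be nilpotent, a contradiction. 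Your proposal contains no substitute for this step.
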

\begin{proof}
Let $\ch K=2$.
Recall that the Lie superalgebra $\RR=\Lie(v_1,v_2)$ coincides with the restricted Lie algebra $\Lie_p(v_0,v_1)$
generated by $v_0,v_1$ (Corollary~\ref{Cchar2}).
So, for any $a\in \RR$ one has the square belonging to~$\RR$.
Fix $n\geq0$ and $0\neq\alpha\in K$.
Consider $v=v_n+\alpha x_n v_{n+1}\in\RR$. We have
\begin{align*}
v^2 & =  (v_n+\alpha x_nv_{n+1})^2 = v_n^2+[v_n,\alpha x_nv_{n+1}]\\
& =  x_{n+1}v_{n+2}+\alpha v_{n+1}
  =  \alpha(v_{n+1}+1/\alpha\cdot x_{n+1}v_{n+2});\\
(v^2)^2&=\alpha^2\cdot 1/{\alpha}(v_{n+2}+\alpha x_{n+2}v_{n+3})
=\alpha(v_{n+2}+\alpha x_{n+2}v_{n+3}).
\end{align*}
By induction, we get
$$
v^{2^k}=\alpha^{r(k)} (v_{n+k}+\alpha^{(-1)^{k}}x_{n+k}v_{n+k+1})\ne 0,\quad\text{where}\ r(k)\in\N,\qquad k\ge 0.
$$
We conclude that $\RR$ is not nil as a restricted Lie algebra.
Hence, its associative hull $\AA$ is not nil as well.

Now, let $\ch K=0$, and $\F_2$ the field with two elements.
A $\Z$-span of pure Lie monomials is a subring
$\WW_{\Z,\mathrm{fin}}(\Lambda_I)\subset \WW_{\mathrm{fin}}(\Lambda_I)$
(see notation in Section~\ref{Sweight})
and infinite sums~\eqref{WW} with coefficients in $\Z$ as well
$\WW_{\Z}(\Lambda_I)\subset \WW(\Lambda_I)$.
We use the ordinary square, which is trivial on pure Lie monomials.
Reduction modulo~2 yields natural epimorphisms:
\begin{align*}
\psi: \WW_{\Z,\mathrm{fin}}(\Lambda_I)
&\twoheadrightarrow \WW_{\Z,\mathrm{fin}}(\Lambda_I)\otimes_{\mathbb{Z}}\mathbb{F}_2\cong
\WW_{\F_2,\mathrm{fin}}(\Lambda_I);\\
\psi: \WW_{\Z}(\Lambda_I)
&\twoheadrightarrow \WW_{\Z}(\Lambda_I)\otimes_{\mathbb{Z}}\mathbb{F}_2\cong
\WW_{\F_2}(\Lambda_I).
\end{align*}
Computations of Theorem~\ref{Tbase} show that we have a $\Z$-subring
$\Lie_{\mathbb{Z}}(v_0,v_1)\subset \Lie_K(v_0,v_1)=\RR\subset\WW(\Lambda)$, consider also
$\Alg_{\mathbb{Z}}(v_0,v_1)\subset \Alg_K(v_0,v_1)=\AA$.
Using Corollary~\ref{C_Lie_subring} and Corollary~\ref{Cchar2},
reduction modulo 2 yields epimorphisms of Lie super-rings and associative rings:
\begin{align*}
\psi:\Lie_{\mathbb{Z}}(v_0,v_1)
 &\twoheadrightarrow
\Lie_{\mathbb{Z}}(v_0,v_1)\otimes_{\mathbb{Z}}\mathbb{F}_2\cong\Lie_{\mathbb{F}_2}(v_0,v_1);\\
\bar\psi:\Alg_{\mathbb{Z}}(v_0,v_1)
 &\twoheadrightarrow\Alg_{\mathbb{Z}}(v_0,v_1)\otimes_{\mathbb{Z}}\mathbb{F}_2\cong\Alg_{\mathbb{F}_2}(v_0,v_1).
\end{align*}
By the first claim, $\Alg_{\mathbb{F}_2}(v_0,v_1)$ is not nil, hence $\AA$ is not nil as well.
\end{proof}

\section{Lie superalgebra $\RR$ is just infinite}\label{secaojustinfinite}

A $K$-algebra $A$ is called {\it just infinite dimensional}, or simply {\it just infinite},
if $\dim_K A=\infty$ and each nonzero ideal of $A$ has finite codimension.
It is also called {\it hereditary just infinite} provided that
any ideal of finite codimension is just infinite.
Recall that by Lemma~\ref{Lpivot}, $\RR=\Lie(v_0,v_1)$ is infinite dimensional.

\begin{Theorem}\label{Tjustinf}
The Lie superalgebra $\RR=\Lie(v_0,v_1)$ is just infinite.
\end{Theorem}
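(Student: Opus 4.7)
The strategy is to show that every nonzero ideal $I\subset\RR$ has finite codimension. Since $\RR$ has finite width (Theorem~\ref{Twidth}), $\dim(\RR/\RR^M)<\infty$ for every $M\ge 1$; it is therefore equivalent to prove $\dim(\RR/I)<\infty$, and I would proceed in two stages.

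\emph{Stage~1 (extracting a pivot).} I would first show that every nonzero ideal $I$ contains some pivot $v_N$. Given $0\ne a\in I$, expand $a=\sum_i\lambda_i w_i$ in the standard monomial basis (Theorem~\ref{Tbase}). Two basic reduction mechanisms should be combined to collapse $a$ to a single pivot. First, for $j$ strictly larger than every head-index in $a$, the derivation $v_j=\dd_j+x_jx_{j+1}v_{j+2}$ annihilates every tail of $a$, so by Leibniz $[v_j,r\,v_m]=\pm r\,[v_j,v_m]$, and by Lemma~\ref{Lprodutos} this is a controlled standard monomial of head $v_{j+1}$ or $v_{j+2}$. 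Second, small pivots $v_i$ act as derivations stripping individual Grassmann letters, since $v_i(x_i)=1$: one computes $[v_i, x_i v_n]=v_n$ modulo terms vanishing by $x_i^2=0$, for appropriate $n$. Iterating such commutators, and using ad-nilpotence on $\Z_2$-homogeneous elements (Theorem~\ref{Tadnilpotente}) together with the fineness of the $\Z^2$-grading (Theorem~\ref{Tfine}) to detect when a single $\Z^2$-component remains, should isolate a scalar multiple of some $v_N\in I$. This is the principal obstacle.

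\emph{Stage~2 (from a pivot to a cofinite ideal).} Suppose $v_N\in I$ with $N\ge 1$. By Lemma~\ref{Lbasicas}(iii), $[v_{N-1},v_N]=-x_{N-1}v_{N+1}\in I$, and Lemma~\ref{Lbasicas}(iv) then gives $[v_{N-1},[v_{N-1},v_N]]=-v_{N+1}\in I$. (For $N=0$ one uses instead $v_0^2=\frac{1}{2}[v_0,v_0]\in I$ and $[v_0^2,v_1]=-v_2\in I$ to pass to $v_2$, after which the preceding argument applies.) Inductively applying $[v_m^2,v_{m+1}]=-v_{m+2}$ from Lemma~\ref{Lbasicas}(iv) places $v_m\in I$ for all $m\ge N$. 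The $\Z^2$-graded ideal $J\subseteq I$ generated by $\{v_m:m\ge N\}$ has its support contained in $\Gr(v_N)+\Z_{\ge 0}^2$, because brackets raise multidegree by elements of $\Z_{\ge 0}^2$ and, for $N\ge 2$, $\Gr(v_m)\ge\Gr(v_N)$ componentwise for all $m\ge N$ by Lemma~\ref{multigraupivo}. By Theorem~\ref{TcurvesL} the support of $\RR$ lies in the logarithmic strip $|Z_2|<\frac{1}{2}\log_2 Z_1+\frac{5}{2}$, so its intersection with the complementary region $\{X_1<\Gr(v_N)_1\}\cup\{X_2<\Gr(v_N)_2\}$ consists of only finitely many lattice points. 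A companion step (using one-dimensionality of the $\Z^2$-components and the inductive reachability of each nonzero $\RR_{n_1,n_2}$ with $(n_1,n_2)\ge\Gr(v_N)$ from some $v_m$ via brackets) shows $J$ hits every such component, so $\dim(\RR/J)<\infty$ and therefore $\dim(\RR/I)<\infty$.

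The principal obstacle is Stage~1. The difficulty is that $\ad v_j$ shifts all $\Z^2$-multidegrees uniformly and thus cannot separate the different graded pieces of $a$ by itself; the reduction must exploit the specific form of the standard basis and the distinguished action of small pivots on Grassmann letters to clean up superfluous terms and end with a pure pivot.
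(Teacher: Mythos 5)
Your Stage~2 is essentially the paper's concluding step and is fine in outline: once some pivot $v_N$ lies in the ideal, the identity $[v_m^2,v_{m+1}]=-v_{m+2}$ (Lemma~\ref{Lbasicas}) yields all higher pivots, and from these one generates all but finitely many basis monomials. (Even here you only assert the ``reachability'' of the remaining components; the paper proves it concretely, manufacturing every standard monomial of length at least $N+6$ from the pivots via Lemma~\ref{Lprodutos} and letter-stripping, so this half still needs writing out, but it is routine.) The genuine gap is Stage~1, which you yourself flag as unresolved: you never actually produce a pivot inside the ideal, and the tools you propose cannot do it. An ideal of $\RR$ need not be $\Z^2$-homogeneous, and bracketing $a$ with a $\Z^2$-homogeneous element shifts \emph{all} multidegree components of $a$ by one and the same vector; so iterating $\ad v_i$ never separates the components by degree bookkeeping alone, ad-nilpotence (Theorem~\ref{Tadnilpotente}) only says that high iterates vanish (it gives no control over \emph{which} summands survive), and fineness of the grading (Theorem~\ref{Tfine}) gives no projection onto a component of a non-graded ideal. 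Moreover, your first mechanism goes the wrong way: for $j$ larger than all head indices, $[v_j,r\,v_m]=\pm r\,[v_j,v_m]$, and by Lemma~\ref{Lprodutos} the monomial $[v_j,v_m]$ carries the tail $x_m\cdots x_{j-1}$, so this bracket \emph{lengthens} tails instead of cleaning them.

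What the paper does at this point --- and what is missing from your proposal --- is a weight-maximality argument combined with an annihilation trick. Writing $0\ne a\in J$ in the standard basis, its maximal-weight part has, by Lemma~\ref{Ldoismonomios}, the very special form $\alpha\,r_{N-2}v_N+\beta\,r_{N-2}x_Nv_{N+1}$ with a \emph{common} tail $r_{N-2}$; bracketing with the pivots $v_i$ for each $x_i$ occurring in $r_{N-2}$ (your second mechanism, used exactly here) strips this tail and keeps the lower-weight junk of lower weight, giving $a'=\tilde a'+\alpha v_N+\beta x_Nv_{N+1}\in J$ with $\wt(\tilde a')<2^N$. After reducing to the case $\alpha\ne 0$ (bracketing once more with $v_N$ if necessary), one brackets with $v_{N-1}^2=x_Nv_{N+1}$: this kills the term $x_Nv_{N+1}$ and sends $v_N$ to $-v_{N+1}$, so $J$ contains an element $v_{N+1}+b'$ in which every monomial of $b'$ has weight strictly between $2^N$ and $2^{N+1}$ and therefore --- the decisive point, forced by Lemma~\ref{Lestimativas} and the strictness of the upper bound --- carries at least one Grassmann letter among $x_0,\dots,x_{N-1}$ in its tail. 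Hence the single bracket $[x_0\cdots x_Nv_{N+2},\,v_{N+1}+b']$ annihilates all of $b'$ at once (each such tail meets $x_0\cdots x_N$ in a repeated letter, which vanishes by $x_k^2=0$) and returns the clean monomial $-x_0\cdots x_{N+1}v_{N+3}$, whence $v_{N+3}\in J$ after stripping letters. Without this analysis of the top-weight part and the annihilating multiplier --- or some replacement for them --- Stage~1, and with it the whole proof, remains open.
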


\begin{proof}
Assume that $J\triangleleft\RR$ is nonzero ideal. Let $0\neq a\in J$.

A) By Theorem~\ref{Tbase} (Corollary~\ref{Cchar2} in case $\ch K=2$), $a$ is
a finite linear combination of standard monomials.
Among these monomials,
consider those of greater weight, which are at most two: $w_1=r_{N-2}v_N$ and $w_2=r_{N-2}x_Nv_{N+1}$
(Lemma~\ref{Ldoismonomios}).
Thus,
\begin{equation*}
a=\tilde{a}+\alpha r_{N-2}v_N+\beta r_{N-2}x_Nv_{N+1},\quad \a,\b\in K;
\end{equation*}
where at least one of $\lbrace\alpha,\beta\rbrace$ is nonzero
and $\tilde{a}$ contains standard monomials which weights
are less than $\wt(r_{N-2}v_N)=\wt(r_{N-2}x_Nv_{N+1})$.
We multiply $a$ by all pivot elements $v_i$ such that $x_i$
is a factor of $r_{N-2}$, thus deleting the whole tail $r_{N-2}$.
As a result, we get
\begin{align}
\label{vw}
a'&=\tilde a'+\alpha v_N+\beta x_Nv_{N+1}\in J;\\
1     &\le \wt(\tilde a')<\wt(v_N)=2^N,
\label{vw2}
\end{align}
where the inequalities above stand for all standard monomials in decomposition of $\tilde a'$.

B) Assume that $\alpha=0$, then $\beta\ne 0$. In this case we take
$$a''=[v_n,a']=[v_n,\tilde a']+\beta v_{N+1}\in J,\quad \b\ne 0.$$
Thus, without loss of generality, we can consider that $\a\ne 0$ in~\eqref{vw}.
Recall that
$[v_{N-1}^2,v_N]=-v_{N+1}$,
$[v_{N-1}^2,x_Nv_{N+1}]=[x_Nv_{N+1},x_Nv_{N+1}]=0$ (see Lemma~\ref{Lbasicas}).
Also, we use~\eqref{vw} and estimates~\eqref{vw2}:
\begin{align}
\label{vwww2}
b  &=[v_{N-1}^2,a']=b'-\a v_{N+1}\in J,\quad \text{where}\\
b' &=[v_{N-1}^2,\tilde a'];\nonumber\\
2^N<2\wt(v_{N-1})+1 &\le \wt(b')< 2\wt(v_{N-1})+2^N= 2^{N+1}\label{upper}.
\end{align}

Comparing~\eqref{upper} with bounds of Lemma~\ref{Lestimativas},
we conclude that $b'$ can only have
monomials of the first type of length $N+1$ and ones of the second type of length $N+2$.
A scalar multiple of~\eqref{vwww2} has a form:
\begin{equation}
\tilde b=\sum_{i}\lambda_i r^{(i)}_{N-1}v_{N+1}+\sum_j\mu_j
{\bar r}^{(j)}_{N-1}x_{N+1}v_{N+2}+ v_{N+1}\in J,\qquad \lambda_i,\mu_j\in K.
\end{equation}
Since upper bound~\eqref{upper} is strict, all tails $r^{(i)}_{N-1}$, ${\bar r}^{(j)}_{N-1}$
have at least one Grassmann variable $\{x_0,\ldots,x_{N-1}\}$.

C) Take
\begin{equation*}
[x_0\cdots x_Nv_{N+2},\tilde b]=- x_0\cdots x_{N}x_{N+1}v_{N+3}\in J.
\end{equation*}
Multiplying by $v_0,\ldots,v_{N+1}$, we delete the respective Grassmann letters and obtain $v_{N+3}\in J$.

D) By Lemma~\ref{Lbasicas},
$[v_{N+2}^2,v_{N+3}]=-v_{N+4}\in J$. Similarly, by induction, $v_n\in J$ for all $n\geq N+3$.
Let us prove that any standard monomial of length $n\geq N+6$ belongs to $J$.
Let $n\geq N+5$. Since $v_{n-2}\in J$ we have
$$[x_0\cdots x_{n-3}v_{n-1},v_{n-2}]=- x_0\cdots x_{n-2}v_n\in J.$$
Multiplying by $v_i$, $i=0,\ldots,n-2$, we can delete any letters $x_i$   above.
Hence, $J$ contains all standard monomials of the first type of length $n\geq N+5$.

Consider the case $\ch K\neq 2$. Let $n\geq N+5$ be even.
Since $v_{n-2}\in J$, using Lemma~\ref{Lprodutos}, we have
$$[v_0,v_{n-2}]=2x_0\cdots x_{n-3}x_{n-1}v_n\in J.$$
Similarly, we can delete any letters $x_i$, $i=0,\ldots,n-3$, above. Let $n\ge N+6$ be odd.
Since $v_{n-3}\in J$, using Lemma~\ref{Lbasicas}, we get
\begin{align*}
[x_0 v_{n-4},[v_{n-4},v_{n-3}]]&=[x_0 v_{n-4},-x_{n-4}v_{n-2}]=-x_0 v_{n-2}\in J;\\
[v_1,x_0 v_{n-2}]&=2x_0\cdots x_{n-3}x_{n-1}v_n\in J.
\end{align*}
Similarly, we can delete any letters $x_i$, $i=0,\ldots,n-3$, above.
Hence, all standard monomials of the second type of length $n\geq N+6$ belong to $J$.

Consider $\ch K=2$. Using the formal quadratic mapping,
$v_{n-2}^2=x_{n-1}v_{n}\in J$ for all $n\geq N+5$.

We proved that $J$ contains all basis monomials of $\RR$ of length greater or equal to $N+6$.
Therefore, dimension of the quotient algebra $\RR/J$ is finite,
bounded by number of the basis monomials of $\RR$ of length less than $N+6$.
\end{proof}

\begin{Lemma}\label{Lnonjustinf}
The Lie superalgebra $\RR=\Lie(v_0,v_1)$ is not hereditary just infinite.
\end{Lemma}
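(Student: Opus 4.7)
Since $\RR$ is just infinite by Theorem~\ref{Tjustinf}, every nonzero ideal of $\RR$ automatically has finite codimension. To show that $\RR$ is not hereditary just infinite, I must exhibit a finite-codimension ideal $J \triangleleft \RR$ which is itself not just infinite, i.e., which contains a proper nonzero ideal $I$ (in the sense of $[J,I]\subseteq I$, the weaker condition) with $\dim J/I = \infty$. The essential freedom is that ``ideal of $J$'' is strictly weaker than ``ideal of $\RR$''; in particular, $I$ need not be stable under bracketing with $v_0$ or $v_1$.

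My plan is to take $J := [\RR,\RR] = \RR^2$, which has codimension $\dim\RR_1 = 2$ by Lemma~\ref{Llowercentral}, and $I := [J,J] = [\RR^2,\RR^2]$, which is automatically an ideal of $J$ by the super-Jacobi identity. The quotient $J/I$ is then the abelianization of $J$, and the task reduces to showing this abelianization is infinite dimensional. Since $I \subseteq \RR^4$, the natural map $J \twoheadrightarrow J/I$ is injective on $\RR_2\oplus\RR_3$, and on each degree $k\ge 4$ it contributes the quotient
\[
\RR_k \bigm/ \Bigl(\,\sum_{\substack{a+b=k\\ a,b\ge 2}}[\RR_a,\RR_b]\Bigr).
\]
I would produce, for infinitely many degrees $k$, a basis element of $\RR_k$ that cannot be written as a commutator of two elements of degree $\ge 2$. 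A natural candidate is the family of standard monomials obtained by bracketing $v_1$ repeatedly on the left with $v_0$, e.g.\ iterates of the form $(\ad v_0)^{m}(v_1)$; such elements live at multidegree $(m,1)$ and, by the one-dimensionality of the $\Z^2$-components (Theorem~\ref{Tfine}) together with the relations of Lemma~\ref{Lbasicas} and Lemma~\ref{Lprodutos}, any expression as $[a,b]$ with $a\in\RR_{a_1,a_2},\ b\in\RR_{b_1,b_2}$ would force $(a_2,b_2) = (1,0)$ or $(0,1)$, hence one of the factors is a pure $v_0$-commutator and so either lies in degree $\le 1$ or has a multidegree incompatible with the fine grading's support.

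The main technical obstacle will be verifying this non-representation rigorously for infinitely many $k$: the $\Z^2$-support of $\RR$ is fractal (bounded by the logarithmic curves of Theorem~\ref{TcurvesL}), so one must check that no exotic bracket decomposition at the target multidegree exists. Should the direct combinatorial argument become unwieldy, I would instead exploit the self-similar structure --- using the shift endomorphism $\tau$ and the isomorphism $\tau^i:\RR\to L_i$ of Lemma~\ref{Lpivot} --- to identify within $J$ an infinite-dimensional subspace of the form $L_k \cap J$ whose image in the abelianization $J/[J,J]$ is visibly infinite dimensional, thereby giving the required proper ideal $I=[J,J]$ of infinite codimension in $J$.
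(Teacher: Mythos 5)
Your overall reduction is fine, and in fact your $J=\RR^2$ is exactly the finite--codimension ideal the paper works with (by Lemma~\ref{Llowercentral}, $\RR^2$ is the span of all standard monomials of length $\ge 2$, i.e.\ the paper's $\RR(2)$). The fatal flaw is the choice $I=[J,J]$: this subspace has \emph{finite} codimension in $J$, so the abelianization of $J$ can never supply the required ideal. To see this, reuse the paper's own relations. By Lemma~\ref{Lbasicas}, $v_{n}=-[v_{n-2}^2,v_{n-1}]=-[x_{n-1}v_n,v_{n-1}]$, and the induction in the proof of Theorem~\ref{Tbase} gives $r_{n-3}v_n=-[v_{n-2}^2,\,r_{n-3}v_{n-1}]$ and $r_{n-3}x_{n-2}v_n=\mp[v_{n-2},\,r_{n-3}v_{n-1}]$, while every standard monomial of the second type of length $n$ equals $\pm\frac12[\tilde r\,v_{n-2},\,x_{n-3}v_{n-2}]$ or $\pm\frac12[\tilde r\,v_{n-2},\,v_{n-2}]$ for a suitable tail $\tilde r$ (case~A of that proof). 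For $n\ge 4$ every factor occurring on the right-hand sides ($v_{n-2}$, $v_{n-2}^2=x_{n-1}v_n$, $r_{n-3}v_{n-1}$, $\tilde r\,v_{n-2}$, $x_{n-3}v_{n-2}$) is a standard monomial of length $\ge 2$, hence lies in $J$. Therefore $[J,J]$ contains every standard monomial of length $\ge 4$, and $\dim J/[J,J]\le 8$ (the monomials of lengths $2$ and $3$); in characteristic $2$ the same conclusion holds for the derived ideal once you close it under the formal squares, as the definition of an ideal of a Lie superalgebra there requires. Your two auxiliary devices also collapse. The family $(\ad v_0)^m(v_1)$ sits at multidegree $(m,1)$, i.e.\ weight coordinates $(Z_1,Z_2)=(m+2,\,m-1)$, which violates the bound $Z_2<\frac12\log_2 Z_1+2$ of Theorem~\ref{TcurvesL} as soon as $m\ge 5$; a direct computation gives $(\ad v_0)^5(v_1)=0$, so the family is finite. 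And the fallback is hopeless on its face: for $k\ge 2$ one has $L_k\subseteq J$ and $[L_k,L_k]\subseteq[J,J]$, so the image of $L_k$ in $J/[J,J]$ is a quotient of $L_k/[L_k,L_k]$, which is spanned by the images of $v_k,v_{k+1}$ and is at most $2$-dimensional.

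The idea you are missing is that the witnessing ideal must come from the Grassmann structure, not from the derived series. The paper keeps (a generalization of) your $J$, namely $\RR(m)$, the span of all basis monomials of length $\ge m$, and inside it takes $I=x_0\RR(m)$, the span of those basis monomials that contain the letter $x_0$. Since every head $v_k$ with $k\ge 1$ annihilates $x_0$ (formula~\eqref{action}) and $x_0^2=0$, the bracket of any two monomials containing $x_0$ vanishes, and bracketing a monomial of $\RR(m)$ with one of $x_0\RR(m)$ never destroys the letter $x_0$; hence $I$ is a nonzero \emph{abelian} ideal of $\RR(m)$. Its codimension in $\RR(m)$ is infinite because all pivot elements $v_i$, $i\ge m$, remain linearly independent modulo $I$. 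This one-line construction replaces your entire non-representability programme, which, as explained above, cannot be repaired with $I=[J,J]$.
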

\begin{proof}
Fix $m\ge 1$.
Let $\RR(m)\subset\RR$ be the linear span of all basis monomials of $\RR$ of length at least $m$.
By multiplication rules (see Lemma~\ref{Lprodutos} and \eqref{action}), $\RR(m)$ is an ideal of $\RR$.
Its codimension is equal to a finite number of the basis  monomials of length less than $m$.
(In particular, $\dim\RR/\RR(1)=1$).
Let $J=x_0\RR(m)$ be the subspace of $\RR(m)$ spanned by its basis monomials containing $x_0$.
By multiplication rules, $J$ is an abelian ideal of $\RR(m)$.
Since $v_i\in \RR(m){\setminus} J$ for all $i\ge m$, we conclude that
$\dim \RR(m)/J=\infty$.
\end{proof}
\begin{Remark}
We conjecture that the 3-generated Lie superalgebra $\QQ$ of~\cite{Pe16} is also just infinite but not hereditary just infinite.
\end{Remark}
\subsection*{Acknowledgments}
The authors are grateful to Alexei Krasilnikov, Plamen Koshlukov, Ivan Shestakov, Said Sidki, Dessislava Kochloukova,
and Dmitry Millionschikov for useful discussions.

\end{document}